\newtheorem{thm}{Theorem}[section]
\newtheorem{lem}[thm]{Lemma}
\newtheorem{cor}[thm]{Corollary}
\newtheorem{prop}[thm]{Proposition}
\newtheorem{rem}[thm]{Remark}
\DeclareMathAlphabet{\mathpzc}{OT1}{pzc}{m}{it}
\numberwithin{equation}{section}
\newcommand{\bqn}{\begin{equation}}
\newcommand{\eqn}{\end{equation}}
\newcommand{\bqnn}{\begin{equation*}}
\newcommand{\eqnn}{\end{equation*}}
\newcommand{\R}{\mathbb{R}}
\newcommand{\B}{D}
\newcommand{\ve}{\varepsilon}
\newcommand{\e}{\varepsilon}
\newcommand{\rd}{\mathrm{d}}
\newcommand{\dhr}{\mathrel{\lhook\joinrel\relbar\kern-.8ex\joinrel\lhook\joinrel\rightarrow}}
\title[A free boundary problem for MEMS with linear bending]
{A free boundary problem modeling electrostatic MEMS:\\ I. Linear bending effects}
\author{Philippe Lauren\c{c}ot}
\address{Institut de Math\'ematiques de Toulouse, CNRS UMR~5219, Universit\'e de Toulouse \\ F--31062 Toulouse Cedex 9, France}
\email{laurenco@math.univ-toulouse.fr}
\author{Christoph Walker}
\address{Leibniz Universit\"at Hannover\\ Institut f\" ur Angewandte Mathematik \\ Welfengarten 1 \\ D--30167 Hannover\\ Germany}
\email{walker@ifam.uni-hannover.de}
\begin{document}

\date{\today}
\begin{abstract}
The dynamical and stationary behaviors of a fourth-order evolution equation with clam\-ped boundary conditions and a singular nonlocal reaction term, which is coupled to an elliptic free boundary problem on a non-smooth domain, are investigated.  The equation arises in the modeling of microelectromechanical systems (MEMS) and includes two positive parameters $\lambda$ and $\varepsilon$ related to the applied voltage and the aspect ratio of the device, respectively. Local and global well-posedness results are obtained for the corresponding hyperbolic and parabolic evolution problems as well as a criterion for global existence excluding the occurrence of finite time singularities  which are not physically relevant. Existence of a stable steady state is shown for sufficiently small~$\lambda$. Non-existence of steady states is also established when $\varepsilon$ is small enough and $\lambda$ is large enough (depending on $\varepsilon$). \end{abstract}

\keywords{MEMS, free boundary problem, fourth-order operator, well-posedness, bending, non-existence}
\subjclass[2010]{35K91, 35R35, 35M33, 35Q74, 35B60}

\maketitle

\section{Introduction}

Electrostatic actuators are typical microelectromechanical systems (MEMS), which consist of a conducting rigid ground plate above which an elastic membrane, coated with a thin layer of dielectric material and clamped on its boundary, is suspended, see Figure~\ref{MEMS1b}. Holding the ground plate and the deformable membrane at different electric potentials induces a Coulomb force across the device resulting in a deformation of the membrane and thus in a change in geometry. Mathematical models have been set up to predict the evolution of such MEMS in which the state of the device is fully described by the deformation $u$ of the membrane and the electrostatic potential $\psi$ in the device, see, e.g. \cite{LE08, PB03}. Assuming that there is no variation in the transverse horizontal direction and that the deformations are small, see Figure~\ref{fig1}, the evolution of $u=u(t,x)$ and $\psi=\psi(t,x,z)$ reads, after a suitable rescaling,
\begin{align}
\gamma^2\partial_t^2 u+\partial_t u +\beta\partial_x^4 u- \tau\partial_x^2 u&=-\lambda \big(\ve^2 \vert\partial_{x} \psi(x,u(x)) \vert^2 + \vert\partial_z \psi(x,u(x)) \vert^2\big)\ , & t>0\ ,& & x\in I\ , \label{hyper1}\\
u(t,\pm 1)= \beta \partial_x u(t,\pm 1)&=0\ ,& t>0\ ,& & \label{hyper2}\\
u(0,\cdot)=u^0\ ,\quad \gamma^2\partial_t u(0,\cdot)&= \gamma^2 u^1\ ,& && x\in I\ ,\label{hyper3}
\end{align}
where $I:=(-1,1)$. The right hand side of \eqref{hyper1} accounts for the electrostatic forces exerted on the membrane, where the parameter $\lambda>0$ is proportional to the square of the voltage difference  between the two components, and the parameter $\varepsilon>0$ denotes the aspect ratio (that is, the ratio height/length of the device). 
\begin{figure}
\centering\includegraphics[scale=.7]{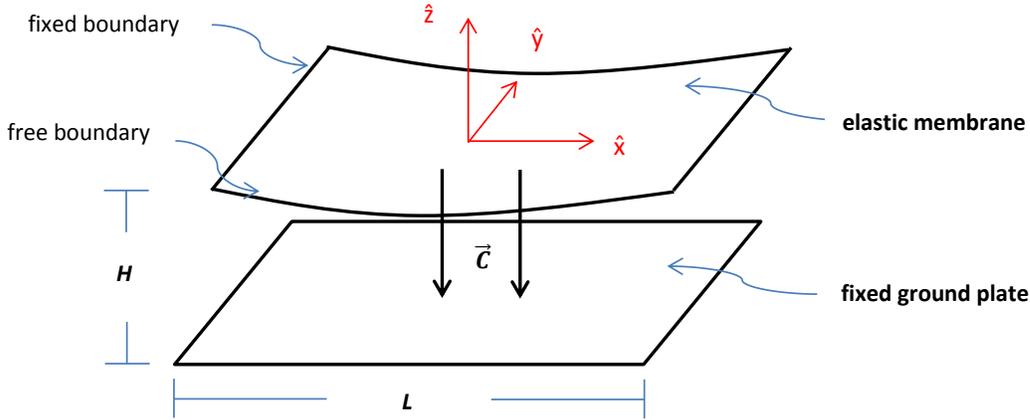}
\caption{\small Idealized electrostatic MEMS device.}\label{MEMS1b}
\end{figure}
The potential $\psi$ (suitably rescaled) satisfies
\begin{equation}\label{psi}
\ve^2 \,\partial_x^2\psi+\partial_z^2\psi=0 \ ,\quad (x,z)\in \Omega(u(t))\ ,\quad t>0\ ,
\end{equation}
in the time-dependent domain
$$
\Omega(u(t)):=\left\{(x,z)\,;\, x\in I\,,\, -1<z<u(t,x)\right\}\ ,
$$
between the ground plate and the membrane and is subject to the boundary conditions
\begin{equation}\label{psibc}
\psi(t,x,z)=\frac{1+z}{1+u(t,x)}\ ,\quad (x,z)\in\partial\Omega(u(t))\ ,\quad t>0\ .
\end{equation}
Recall that, in \eqref{hyper1}, $\gamma^2\partial_t^2 u$ and $\partial_t u$ account, respectively, for inertia and damping effects, while $\beta\partial_x^4 u$ and $-\tau\partial_x^2 u$ correspond to bending and stretching of the membrane, respectively. Thus, \eqref{hyper1} is a hyperbolic nonlocal semilinear equation for the membrane displacement $u$, which is coupled to the elliptic equation \eqref{psi} in the free domain $\Omega(u(t))$ for the electrostatic potential $\psi$. If damping effects dominate over inertia effects, one may neglect the latter by setting $\gamma=0$ and so obtains a parabolic equation for $u$. In this paper we shall investigate the hyperbolic problem as well as the parabolic one. 

Let us emphasize here that \eqref{hyper1}-\eqref{psibc} is meaningful only as long as the deformation $u$ stays above $-1$. From a physical viewpoint, when $u$ reaches the value $-1$ at some time $T_c>0$, that is, when
\begin{equation}
\lim_{t\to T_c} \min_{x\in I}\{ u(t,x) \} = -1\ , \label{touchdown}
\end{equation}
this corresponds to a touchdown of the deformable membrane on the ground plate, a phenomenon which has been observed experimentally in MEMS devices for sufficient large applied voltage values $\lambda$. In fact, the occurrence of this phenomenon is usually referred to as \textsl{pull-in instability} in physics literature and is characterized by the existence of a threshold value $\lambda_*$ for the applied voltage $\lambda$ with the following properties: touchdown occurs in finite time whenever $\lambda>\lambda_*$, but never takes place for $\lambda\in (0,\lambda_*)$. Obviously, the stable operating conditions of a given MEMS device heavily depend on the possible occurrence of this phenomenon, which may either be an expected feature of the device or irreversibly damage it. From this viewpoint, it is of great importance to test mathematical models for MEMS such as \eqref{hyper1}-\eqref{psibc} whether they exhibit such a touchdown behavior, that is, whether \eqref{touchdown} could occur. This question has been at the heart of a thorough mathematical analysis during the past decade for a simplified version of \eqref{hyper1}-\eqref{psibc}, the so-called \textsl{small aspect ratio model}. It is formally obtained from \eqref{hyper1}-\eqref{psibc} by setting $\varepsilon=0$ in \eqref{hyper1} and \eqref{psi}. In fact, setting $\varepsilon=0$ in \eqref{psi} and using \eqref{psibc} allows one to compute explicitly the electrostatic potential $\psi_0$ as a function of the yet to be determined deformation $u_0$ in the form
$$
\psi_0(t,x,z) = \frac{1+z}{1+u_0(t,x)}\ .
$$
The evolution equation for the deformation $u_0$ then reduces to
\begin{equation}
\gamma^2\partial_t^2 u_0+\partial_t u_0 +\beta\partial_x^4 u_0 - \tau\partial_x^2 u_0 = - \frac{\lambda}{(1+u_0)^2}\ , \qquad t>0\ , \quad x\in I\ , \label{sgm}
\end{equation}
supplemented with the clamped boundary conditions \eqref{hyper2} and the initial conditions \eqref{hyper3}. This approximation thus not only allows one to solve explicitly the free boundary value problem \eqref{psi}-\eqref{psibc}, but also reduces the nonlocal equation \eqref{hyper1} to a single semilinear equation with a still singular, but explicitly given reaction term. Furthermore, the right hand side of \eqref{sgm} is obviously monotone and concave with respect to $u_0$ and thus enjoys two highly welcome properties which are utmost helpful for the study of \eqref{sgm}: in particular, combined with the comparison principle, they yield the existence of the expected threshold value $\lambda_*$ of $\lambda$ such that there is no stationary solution for $\lambda>\lambda_*$ and at least one stable stationary solution for $\lambda\in (0,\lambda_*)$. The occurrence of the touchdown phenomenon \eqref{touchdown} for $\lambda>\lambda_*$ is also known to be true, but only for the second-order case $\beta=0$. We refer to  \cite{EspositoGhoussoubGuo, Gu10, GW09, KLNT11, LaurencotWalker_JAM, LinYang, LL12} and the references therein for a more complete description of the available results on the small aspect ratio model. We shall point out, however, that mainly the second-order case $\beta=0$ or the fourth-order case $\beta>0$ but with pinned boundary conditions $u=\beta\partial_x^2u=0$ (instead of the clamped boundary conditions \eqref{hyper2}) have been the focus of the mathematical research hitherto.

Unfortunately, the right hand side of \eqref{hyper1} does not seem to enjoy similar properties for $\varepsilon>0$ and so, we cannot rely on them to study the original free boundary problem \eqref{hyper1}-\eqref{psibc}. We thus shall take a different route in the spirit of the approach developed in \cite{ELW1, ELW2} for the second-order parabolic version of \eqref{hyper1}-\eqref{psibc} corresponding to the choice $\beta=\gamma=0$ of the parameters. Let us also mention that a quasilinear variant of the parabolic case $\gamma=0$ of \eqref{hyper1}-\eqref{psibc} with $\beta>0$ and curvature terms is investigated in the companion paper \cite{LW_QAM}, where the small deformation assumption is discarded.

\begin{figure}
\centering\includegraphics[width=10cm]{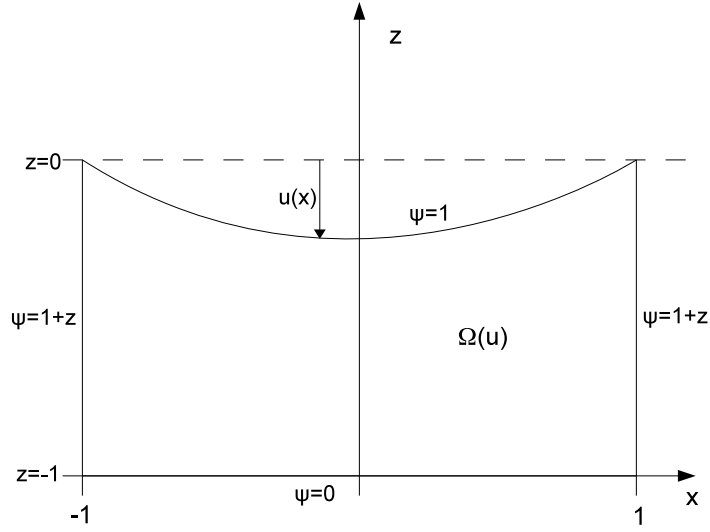}
\caption{\small A one-dimensional idealized electrostatic MEMS device.}\label{fig1}
\end{figure}

We focus in this paper on the case $\beta>0$, where bending is taken into account resulting in a fourth-order derivative in \eqref{hyper1} and an additional boundary condition in \eqref{hyper2} which has hardly been studied even for the small aspect ratio model \eqref{sgm} as mentioned above. Before describing more precisely the results of the analysis performed in this paper, let us first single out the main findings: the starting point is to establish the local well-posedness of \eqref{hyper1}-\eqref{psibc} along with an extension criterion guaranteeing global existence. As already observed in \cite{ELW1, ELW2}, the right hand side of \eqref{hyper1} is a nonlinear operator of, roughly speaking, order $3/2$ (in the sense that it maps $W_q^2(I)$ in $W_q^{\theta}(I)$ for all $\theta\in [0,1/2)$ and $q\in (2,\infty)$, see Proposition~\ref{L1} below). Since it also becomes singular when $u$ approaches $-1$, the extension criterion resulting from the fixed point argument leading to local well-posedness involves not only a lower bound on $u$, but also an upper bound on the norm of $(u,\gamma^2 \partial_t u)$ in a suitable Sobolev space. To be more precise, we first show that, if the maximal existence time $T_m$ of the solution $(u,\psi)$ to \eqref{hyper1}-\eqref{psibc} is finite, then
\begin{equation}\label{silbermond}
\limsup_{t\to T_m} \left( \|u(t)\|_{H^{2+2\alpha}(I)} + \gamma^2 \|\partial_t u(t)\|_{H^{2\alpha}(I)} \right) = \infty \;\;\text{ or }\;\; \liminf_{t\to T_m} \min_{[-1,1]} u(t) = -1
\end{equation}
for some $\alpha\in (0,1/4)$, see Proposition~\ref{Aglobal} and Proposition~\ref{ThyperIntroduction} below. The outcome of \eqref{silbermond} is not yet fully satisfactory from a physical point of view as it does not imply that finite time singularities are only due to the touchdown phenomenon \eqref{touchdown} described above. Precluding the occurrence of the finite time blowup of a Sobolev norm of $(u,\gamma^2 \partial_t u)$ requires more work and can subsequently be achieved by fully exploiting the additional information coming from the fourth-order derivative $\beta \partial_x^4 u$ as well as the underlying gradient flow structure of \eqref{hyper1}-\eqref{psibc}. Indeed, shape optimization computations reveal that \eqref{hyper1}-\eqref{psibc} may be seen as a gradient flow associated to the functional 
\begin{equation}
\mathcal{E}(u) := \mathcal{E}_b(u)+\mathcal{E}_s(u) - \lambda \mathcal{E}_e(u)\ , \label{TotalEnergy}
\end{equation}
which involves the mechanical energy $\mathcal{E}_b
+ \mathcal{E}_s$ given by
\begin{equation}
\mathcal{E}_b(u) := \frac{\beta}{2} \|\partial_x^2 u \|_{L_2(I)}^2\ , \quad \mathcal{E}_s(u) :=\frac{\tau}{2} \|\partial_x u \|_{L_2(I)}^2\ , \label{MechEnergy}
\end{equation}
and the electrostatic energy
\begin{equation}
\mathcal{E}_e(u) :=   \int_{\Omega(u)} \left[ \varepsilon^2 |\partial_x \psi_u(x,z)|^2 + |\partial_z \psi_u(x,z)|^2 \right]\ \mathrm{d}(x,z)\ , \label{ElecEnergy}
\end{equation}
the function $\psi_u$ denoting the solution to \eqref{psi}-\eqref{psibc} in $\Omega(u)$ for a given $u$. This fact seems to have been unnoticed up to now though it is inherent in the derivation of the model. Note, however, that the energy $\mathcal{E}$ is not coercive as it is the sum of three terms with different signs which do not seem to balance each other. Nevertheless, it plays an important r\^ole in our analysis since we show in Section~\ref{Sec6} that $\mathcal{E}_e(u)$ can be controlled by $\mathcal{E}_b(u)$ as long as $u$ stays bounded away from~$-1$,  provided that $\gamma$ is not too large. Recalling that $\mathcal{E}(u)$ is a decreasing function of time as a consequence of the gradient flow structure, such a control provides a bound on the $H^2(I)$-norm of $u$, still as long as touchdown does not occur. A bootstrap argument then implies that $(u,\gamma^2 \partial_t u)$ cannot blow up in that case and thus excludes that the finiteness of $T_m$ is due to the first statement in \eqref{silbermond}. Therefore,  when $\gamma$ is sufficiently small, we are able to prove a highly salient feature of the physical model: a finite time singularity is necessarily due to the touchdown phenomenon~\eqref{touchdown}. For large values of $\gamma$, it might be that oscillations created by the hyperbolic character of \eqref{hyper1} could interact with the touchdown phenomenon and give rise to more complicated dynamics. 

\medskip

We now state more precisely the main results. From now on the parameters $\varepsilon>0$, $\beta>0$, and $\tau\ge 0$ are fixed, additional restrictions on their ranges being made explicit in the statements of the results.

\subsection{Parabolic Case: $\gamma=0$}

We begin with the parabolic case $\gamma=0$ and first state its well-posedness along with a criterion for global existence which implies that a finite time singularity can only result from the touchdown phenomenon \eqref{touchdown}.

\begin{thm}[{\bf Well-Posedness}]\label{Alin}
Let $\gamma=0$. Consider an initial value $u^0\in  H^{4}(I)$ satisfying the boundary conditions $u^0(\pm 1)=\partial_x u^0(\pm 1)=0$ and such that $u^0(x)>-1$ for $x\in I$. Then, the following are true:

\begin{itemize}

\item[(i)] For each $\lambda>0$, there is a unique solution $(u,\psi)$ to \eqref{hyper1}-\eqref{psibc} on the maximal interval of existence $[0,T_m)$ in the sense that
$$
u\in C^1\big([0,T_m),L_2(I)\big)\cap  C\big([0,T_m), H^{4}(I)\big)
$$
satisfies \eqref{hyper1}-\eqref{hyper3} together with
$$
u(t,x)>-1\ ,\quad (t,x)\in [0,T_m)\times I\ , 
$$ 
and $\psi(t)\in H^2\big(\Omega(u(t))\big)$  solves \eqref{psi}-\eqref{psibc} in $\Omega(u(t))$ for each $t\in [0,T_m)$. 

\item[(ii)] If, for each $T>0$, there is $\kappa(T)\in (0,1)$ such that 
$$ 
u(t)\ge -1+\kappa(T)\ \text{ in } I\ \text{ for } t\in [0,T_m)\cap [0,T]\ ,
$$ 
then the solution exists globally in time, that is, $T_m=\infty$.

\item[(iii)] Given $\kappa\in (0,1)$, there are $\lambda_*(\kappa):=\lambda_*(\kappa,\ve)>0$ such that the solution exists globally in time provided that $\lambda\in (0,\lambda_*(\kappa))$ and $u^0\ge -1+\kappa$ in $I$ with $\|u^0\|_{H^{4}(I)}\le 1/\kappa$. Moreover,  $u\in L_\infty(0,\infty;H^{4}(I))$ in this case and 
$$
\inf_{(t,x)\in [0,\infty)\times I} u(t,x)>-1\ .
$$
\end{itemize}
\end{thm}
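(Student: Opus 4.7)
The proof naturally splits into the three assertions (i)--(iii), all built on the analyticity of the semigroup generated by $-\mathcal{A} := -\beta \partial_x^4 + \tau \partial_x^2$ subject to the clamped boundary conditions on $I=(-1,1)$, together with the mapping properties of the nonlocal nonlinearity $u \mapsto g(u)$ arising as the right-hand side of \eqref{hyper1}, which by Proposition~\ref{L1} sends bounded subsets of $\{v\in \Wqq : v\ge -1+\kappa\}$ into bounded subsets of $\Wqb^\theta$ for any $\theta\in[0,1/2)$ and $q>2$, with a Lipschitz constant controlled by $\kappa$.

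For (i), I would carry out a standard contraction argument for the Duhamel formulation
$$
u(t) = e^{-t\mathcal{A}} u^0 - \lambda \int_0^t e^{-(t-s)\mathcal{A}} g(u(s)) \, \mathrm{d}s
$$
on a small ball of $C([0,T]; \Wqq)$. The Lipschitz estimate furnished by Proposition~\ref{L1}, combined with the fourth-order parabolic smoothing of $e^{-t\mathcal{A}}$ (which more than absorbs the loss of order $3/2$ in $g$), makes the map a contraction for $T$ small. Positivity $1+u>0$ persists on such an interval by continuity, and the solution extends to a maximal time $T_m$. A parabolic bootstrap exploiting the smoothing of $e^{-t\mathcal{A}}$ and the regularity $u^0\in H^4(I)$ then upgrades $u$ to the class $C^1([0,T_m);L_2(I))\cap C([0,T_m);H^4(I))$ announced in the statement.

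For (ii), I would exploit the general extension criterion \eqref{silbermond}, which for $\gamma=0$ reduces to the dichotomy: either $\min_I u(t) \to -1$ as $t\nearrow T_m$, or $\|u(t)\|_{H^{2+2\alpha}(I)}\to\infty$ for some $\alpha\in(0,1/4)$. Under the standing assumption $u(t)\ge -1+\kappa(T)$, Proposition~\ref{L1} bounds $\|g(u(t))\|_{\Wqb^\theta}$ in terms of $\|u(t)\|_{\Wqq}$ alone. A maximal regularity (or sectorial) estimate plus a Gronwall argument, applied to the Duhamel formula, precludes finite-time blow-up of the $\Wqq$-norm; one further bootstrap step then excludes blow-up in $H^{2+2\alpha}(I)$, contradicting the first alternative of the dichotomy and forcing $T_m=\infty$.

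Part (iii) is the main obstacle, since it demands a priori bounds that are uniform in time. The plan is to construct a positively invariant set for small~$\lambda$. Given $\kappa\in(0,1)$ and $u^0$ with $u^0\ge -1+\kappa$ and $\|u^0\|_{H^4(I)}\le 1/\kappa$, I would combine the exponential decay of $e^{-t\mathcal{A}}$ on the $H^4$-scale with the uniform estimate $\|g(v)\|_{H^2(I)}\le K(\kappa/2,M)$, valid whenever $v\ge -1+\kappa/2$ and $\|v\|_{H^4(I)}\le M$, to derive from the Duhamel formula the twin bounds
$$
\|u(t)\|_{H^4(I)}\le C\|u^0\|_{H^4(I)} + C\lambda K(\kappa/2,M), \qquad 1+u(t,x)\ge \kappa - C\lambda K(\kappa/2,M),
$$
as long as the bootstrap hypotheses persist. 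Choosing $M:=2C/\kappa$ and then $\lambda_*(\kappa)$ so small that $C\lambda_* K(\kappa/2,M)<\min\{\kappa/2, M/2\}$, a standard continuity argument propagates the hypotheses to all $t\ge 0$. Combined with (ii), this simultaneously yields $T_m=\infty$, the uniform separation $\inf_{[0,\infty)\times I} u > -1$, and the asserted $L_\infty(0,\infty;H^4(I))$ bound.
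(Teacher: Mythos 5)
Your treatment of parts (i) and (iii) follows essentially the same route as the paper: both are obtained from a contraction/Duhamel argument using the analyticity and exponential decay of the semigroup generated by $-A=-\beta\partial_x^4+\tau\partial_x^2$ together with the local Lipschitz property of $g$ from Proposition~\ref{L1} (the paper delegates this to Proposition~\ref{Aglobal} with $\xi=1$, proved as in \cite{ELW1}), and your invariant-set/smallness argument for (iii) matches that scheme. The problem is part (ii), which is the actual content of Theorem~\ref{Alin} beyond Proposition~\ref{Aglobal}, and there your argument has a genuine gap. You claim that, under the sole hypothesis $u(t)\ge -1+\kappa(T)$, Proposition~\ref{L1} bounds $\|g(u(t))\|$ ``in terms of $\|u(t)\|_{W_q^2(I)}$ alone'' and that a maximal regularity estimate plus Gronwall applied to the Duhamel formula then precludes finite-time blow-up of that norm. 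Neither step works as stated: the uniform bound on $g$ in Proposition~\ref{L1} holds on $\overline{S}_\theta(\kappa)$, whose definition already includes the norm bound $\|v\|_{H^{4\theta}}\le 1/\kappa$, so the lower bound on $u$ alone only yields an estimate of $\|g(u)\|$ by some (superlinear) function of the Sobolev norm of $u$ --- indeed the sharp available estimate is Lemma~\ref{le.rc7}, $\|g(u)\|_{H^\sigma}\le C(\kappa_0)\big(1+\|u\|_{H^2}^{44}\big)$. Feeding a superlinear bound of this type into Duhamel and Gronwall does not exclude finite-time blow-up (compare the ODE $y'=Cy^{44}$), so your argument cannot rule out the first alternative in \eqref{silbermond}.

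What is missing is precisely the mechanism the paper uses in Section~\ref{Sec6}: an a priori, time-uniform $H^2(I)$ bound on $u$ coming from the gradient-flow structure rather than from Gronwall. Concretely, the paper proves the energy identity \eqref{rc2} (Proposition~\ref{le.rc2}, which itself requires the shape-derivative computation of Proposition~\ref{pr.z1} and a Steklov-averaging approximation because $u$ is not regular enough in time), then shows that as long as $u\ge -1+\kappa_0$ the electrostatic energy $\mathcal{E}_e(u)$ is controlled by a small multiple of $\mathcal{E}_b(u)$ plus a constant; this control uses the weighted $L_1$ bound on $u$ obtained from the positive eigenfunction $\zeta_1$ of $\beta\partial_x^4-\tau\partial_x^2$ and the sign of the right-hand side of \eqref{hyper1} (Lemma~\ref{le.rc3}), a Poincar\'e-type inequality (Lemma~\ref{le.rc4}), and the bound of Lemma~\ref{le.rc5}, leading to the coercivity estimate $\mathcal{E}(u)\ge \tfrac12(\mathcal{E}_b+\mathcal{E}_s)(u)-C_1(\kappa_0)$ of Lemma~\ref{le.rc6}. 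Combining this with the decay of $\mathcal{E}(u(t))$ gives the uniform $H^2$ bound; only then does Lemma~\ref{le.rc7} give a uniform bound on $\|g(u(t))\|_{H^\sigma}$, and parabolic regularity upgrades this to the $H^4$ bound that, together with Proposition~\ref{Aglobal}~(ii), forces $T_m=\infty$. None of this energy machinery appears in your proposal, and without it (or an equivalent a priori bound depending only on the lower bound for $u$) part (ii) does not follow.
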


An important outcome of Theorem~\ref{Alin} is that the finiteness of $T_m$ corresponds to the occurrence of the touchdown phenomenon \eqref{touchdown} as stated in part (ii). This is in sharp contrast with the case $\beta=0$ studied in \cite{ELW1}, where the finiteness of $T_m$ could also be due to a blowup of the $W_q^2(I)$-norm of $u(t)$ as $t\to T_m$. The additional regularity of $u$ provided here by the fourth-order term $\beta \partial_x^4 u$ allows us to rule out the occurrence of this latter singularity. Also note that part~(iii) of Theorem~\ref{Alin} provides uniform estimates on the norm of $u$ and implies that touchdown does not even occur in infinite time.

\begin{rem}
Clearly, the maximal existence time $T_m>0$ depends not only on $\lambda>0$, but also on $\ve>0$.
\end{rem}

We perform the proof in Sections~\ref{Sec3a} and~\ref{Sec6}. We first use the regularizing properties of the parabolic operator $\partial_t + \beta \partial_x^4 - \tau \partial_x^2$ to set up a fixed point scheme and establish the local well-posedness of \eqref{hyper1}-\eqref{psibc} for all values of $\lambda$ and its global well-posedness for $\lambda$ sufficiently small. The results obtained are actually valid for less regular initial data, see Proposition~\ref{Aglobal} for a precise statement. A further outcome of this analysis is that solutions can be continued as long as $u$ stays above $-1$ and a suitable Sobolev norm of $u$ is controlled, as already outlined in \eqref{silbermond}. We subsequently show in Proposition~\ref{pr.rc1} that the former implies the latter, leading to Theorem~\ref{Alin}~(ii). An important step in the proof is the following energy equality (recall that $\mathcal{E}$, $\mathcal{E}_b$, $\mathcal{E}_s$, and $\mathcal{E}_e$ are defined in \eqref{TotalEnergy}-\eqref{ElecEnergy}). 

\begin{prop}\label{le.rc2}
Under the assumptions of Theorem~\ref{Alin}, 
\begin{equation}
\mathcal{E}(u(t)) + \int_0^t \|\partial_t u(s)\|_{L_2(I)}^2\ \mathrm{d}s = \mathcal{E}(u^0)\ ,\quad t\in [0,T_m)\ . \label{rc2}
\end{equation}
\end{prop}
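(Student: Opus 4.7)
The plan is to test \eqref{hyper1} (with $\gamma=0$) against $\partial_t u(t)$, integrate over $I$, and isolate the shape derivative of the electrostatic energy. Since $u(t,\pm 1) = \partial_x u(t,\pm 1) = 0$ for all $t$, two integrations by parts in $x$ yield
\begin{equation*}
\int_I \beta \partial_x^4 u \cdot \partial_t u \,\mathrm{d}x = \frac{\mathrm{d}}{\mathrm{d}t}\mathcal{E}_b(u), \qquad -\int_I \tau \partial_x^2 u \cdot \partial_t u \,\mathrm{d}x = \frac{\mathrm{d}}{\mathrm{d}t}\mathcal{E}_s(u),
\end{equation*}
so that \eqref{hyper1} becomes the pointwise-in-time identity
\begin{equation*}
\|\partial_t u(t)\|_{L_2(I)}^2 + \frac{\mathrm{d}}{\mathrm{d}t}\bigl(\mathcal{E}_b(u) + \mathcal{E}_s(u)\bigr) = -\lambda \int_I \bigl(\varepsilon^2 |\partial_x \psi_u|^2 + |\partial_z \psi_u|^2\bigr)\big|_{z=u(t,x)}\,\partial_t u(t,x)\,\mathrm{d}x.
\end{equation*}
Integrating in time, \eqref{rc2} follows provided one establishes the shape-derivative formula
\begin{equation*}
\frac{\mathrm{d}}{\mathrm{d}t}\mathcal{E}_e(u(t)) = \int_I \bigl(\varepsilon^2 |\partial_x \psi_u|^2 + |\partial_z \psi_u|^2\bigr)\big|_{z=u(t,x)}\,\partial_t u(t,x)\,\mathrm{d}x.
\end{equation*}

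To prove this shape-derivative formula, the natural strategy is to pull the moving domain $\Omega(u(t))$ back to the fixed cylinder $\mathcal{D}:=I\times(0,1)$ through the diffeomorphism $\eta=(1+z)/(1+u(t,x))$, setting $\Psi(t,x,\eta):=\psi_{u(t)}(x,(1+u(t,x))\eta-1)$. In the new variables the Dirichlet trace becomes $\Psi=\eta$ on $\partial\mathcal{D}$, now independent of $u$; concurrently, $\mathcal{E}_e(u)$ rewrites as an explicit quadratic functional $F(\Psi;u,\partial_x u)$ over $\mathcal{D}$ whose Euler--Lagrange equation, subject to the constraint $v|_{\partial\mathcal{D}}=\eta$, is precisely the pulled-back version of \eqref{psi}--\eqref{psibc}. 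Differentiating the identity $\mathcal{E}_e(u(t))=F(\Psi(t);u(t),\partial_x u(t))$ in $t$ by the chain rule, the contribution of $\partial_t\Psi$ vanishes: $\partial_t\Psi$ has zero trace on $\partial\mathcal{D}$ (the boundary data being $t$-independent) and therefore lies in $H^1_0(\mathcal{D})$, where $\Psi$ is a critical point of $v\mapsto F(v;u,\partial_x u)$. Only the explicit $u$-derivatives of $F$ remain. Integrating by parts in $x$---the boundary terms at $x=\pm 1$ dropping thanks to $u(t,\pm 1)=\partial_t u(t,\pm 1)=0$---and using the pull-back identities $\partial_z\psi=(1+u)^{-1}\partial_\eta\Psi$ together with $\partial_x\psi|_{z=u}=-\partial_x u\cdot\partial_z\psi|_{z=u}$ (the latter coming from $\Psi(t,x,1)\equiv 1$), the resulting expression collapses precisely into the desired boundary integral.

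The main technical obstacle is the rigorous justification of the chain-rule differentiation, which requires the map $t\mapsto\Psi(t)$ to be continuously differentiable into $H^1(\mathcal{D})$ so that $\partial_t\Psi$ really lies in $H^1_0(\mathcal{D})$. This follows from the regularity $u\in C([0,T_m);H^{4}(I))\cap C^1([0,T_m);L_2(I))$ supplied by Theorem~\ref{Alin}: differentiating the pulled-back elliptic problem in $t$ produces another well-posed elliptic problem for $\partial_t\Psi$ with zero Dirichlet data and right-hand side controlled by $\partial_t u$ in a suitable negative-order space. Elliptic $H^2$-regularity of $\psi_u$ on $\Omega(u(t))$ additionally ensures that the trace $(\varepsilon^2|\partial_x\psi_u|^2+|\partial_z\psi_u|^2)|_{z=u(t,x)}$ lies in $L_2(I)$, so that the boundary integral makes sense against $\partial_t u\in L_2(I)$. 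Combining the shape-derivative identity with the balance from the first step and integrating over $[0,t]$ then yields \eqref{rc2}.
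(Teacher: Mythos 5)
Your overall strategy --- pulling the problem back to the fixed rectangle, writing $\mathcal{E}_e(u)$ as a functional of the transformed potential, and using the fact that this potential is a critical point with $t$-independent boundary data to cancel the $\partial_t\Psi$ contribution --- is essentially the computation the paper performs in Proposition~\ref{pr.z1} (there the cancellation is implemented by testing the equation with $(1+\bar u)\,\partial_t\phi$). Two things go wrong, however. First, a sign: the correct shape-derivative formula is
\begin{equation*}
\frac{\mathrm{d}}{\mathrm{d}t}\mathcal{E}_e(u(t)) = -\int_{-1}^1 g(u(t))\,\partial_t u(t)\,\mathrm{d}x\ ,
\end{equation*}
as one checks already in the small-aspect-ratio case $\varepsilon=0$, where $\mathcal{E}_e(u)=\int_{-1}^1 (1+u)^{-1}\,\mathrm{d}x$ and $g(u)=(1+u)^{-2}$. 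With the plus sign you state, your first step would yield the decay of $\mathcal{E}_b+\mathcal{E}_s+\lambda\mathcal{E}_e$ rather than \eqref{rc2}; the minus sign is exactly what makes $\mathcal{E}=\mathcal{E}_b+\mathcal{E}_s-\lambda\mathcal{E}_e$ the correct Lyapunov functional.

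Second, and more seriously, the justification of the chain rule is a genuine gap. Under the assumptions of Theorem~\ref{Alin} one only has $\partial_t u\in C([0,T_m),L_2(I))$. Your argument needs $\partial_t U=\partial_x\bigl(\partial_t u/(1+u)\bigr)$ as a function, invokes the boundary values $\partial_t u(t,\pm1)=0$ when integrating by parts in $x$, and requires differentiating in $t$ the coefficients of the pulled-back operator, which contain $\partial_x u/(1+u)$; none of this is meaningful when $\partial_t u(t)$ is merely in $L_2(I)$ (no trace, and $\partial_x\partial_t u$ only in $H^{-1}(I)$). The assertion that $t\mapsto\Psi(t)$ is $C^1$ into $H^1$ ``follows from'' the regularity supplied by Theorem~\ref{Alin} is precisely the point that does not follow: this is why the paper proves the identity first for $\bar u\in C([0,T],H_{D}^{2+2\nu}(I))\cap C^1([0,T],H_{D}^{1+2\nu}(I))$ (so that $\partial_t\bar u\in W_{q,D}^1(I)$ and all the integrations by parts are licit), and then transfers it to the actual solution by a Steklov-average approximation $u_\delta$, passing to the limit $\delta\to0$ with the Lipschitz dependence of $\phi_v$ on $v$ from Proposition~\ref{L1} and the convergence $\partial_t u_\delta\to\partial_t u$ in $L_2((0,T)\times I)$. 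Without such an approximation step (or an equally rigorous weak-formulation argument eliminating every use of traces and derivatives of $\partial_t u$), your proof is incomplete at its central point.
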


The main difficulty in the proof of Proposition~\ref{le.rc2} is the computation of the derivative of $\mathcal{E}_e(u)$ with respect to $u$. Indeed, the dependence of $\mathcal{E}_e(u)$ on $u$ is somehow implicit and involves the domain $\Omega(u)$. Nevertheless, the derivative of $\mathcal{E}_e(u)$ with respect to $u$ can be interpreted as the shape derivative of the Dirichlet integral of $\psi_u$, which can be computed and shown to be equal to the right hand side of \eqref{hyper1} -- except for the sign -- by shape optimization arguments \cite{HP05}. Let us, however, mention that the time regularity of $u$ is not sufficient to apply directly the results in \cite{HP05} and an approximation has to be used, see Proposition~\ref{pr.z1} below.

\subsection{Hyperbolic Case: $\gamma>0$}

We next turn to the hyperbolic case $\gamma>0$ and show that results similar to Theorem~\ref{Alin} and Proposition~\ref{le.rc2} are available in that case as well, with two noticeable peculiarities: on the one hand, the lack of regularizing effects for the beam equation requires more regularity on the initial data. On the other hand, the extension of Theorem~\ref{Alin}~(ii) only seems possible for small values of $\gamma$, see Theorem~\ref{ThyperIntro.b}.

\begin{thm}\label{ThyperIntro}
Let $\gamma>0$. Consider an initial value $(u^0,u^1)\in H^{5}(I)\times H^{3}(I)$  satisfying $u^0(\pm 1) = \partial_x u^0(\pm 1) = u^1(\pm 1) = \partial_x u^1(\pm 1) = 0$ and such that $u^0> -1$ in~$I$. Then the following hold:

\begin{itemize}
\item[(i)] For each $\lambda>0$, there is a unique solution $(u,\psi)$ to \eqref{hyper1}-\eqref{psibc} on the maximal interval of existence $[0,T_m)$ in the sense that
$$
u\in C([0,T_m),H^{2}(I))\cap C^1([0,T_m),L_2(I))\ ,\quad \partial_t^k u\in L_1(0,T; H^{4-2k}(I))
$$
for $k=0,1,2$ and $T\in (0,T_m)$, and satisfies \eqref{hyper1}-\eqref{hyper3} together with
$$
u(t,x)>-1\ ,\quad (t,x)\in [0,T_m)\times I\ , 
$$ 
while $\psi(t)\in H^2\big(\Omega(u(t))\big)$  solves \eqref{psi}-\eqref{psibc}  in $\Omega(u(t))$ for each $t\in [0,T_m)$.

\item[(ii)] If, for each $T>0$, there is $\kappa(T)\in (0,1)$ such that 
$$
\|u(t)\|_{H^{3}(I)}+\|\partial_t u(t)\|_{H^{1}(I)}\le \frac{1}{\kappa(T)}\quad \text{and}\quad   u(t)\ge -1+\kappa(T)\ \text{ in } I
$$ for $t\in [0,T_m)\cap [0,T]$, then the solution exists globally in time, that is, $T_m=\infty$.

\item[(iii)] Given $\kappa\in (0,1)$, there are $\lambda(\kappa)>0$ and $N(\kappa)>0$ such that $T_m=\infty$ provided that $\lambda\le \lambda(\kappa)$ and $u^0\ge -1+\kappa$ in $I$ with 
$$ 
\|(u^0,u^1)\|_{H^{5}(I)\times H^{3}(I)}\le N(\kappa)\ .
$$
Moreover, $u\in L_\infty(0,\infty;H^{2}(I))$ in this case and
$$
\inf_{(t,x)\in [0,\infty)\times I} u(t,x)>-1\ .
$$
\end{itemize}
\end{thm}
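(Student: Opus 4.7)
The plan is to mirror the proof of Theorem~\ref{Alin}, replacing the parabolic smoothing by the $C_0$-semigroup $\{e^{t\mathcal{A}}\}_{t\ge 0}$ associated with the damped beam equation. Rewrite \eqref{hyper1}-\eqref{hyper3} as the first-order system
\[
\partial_t \begin{pmatrix} u \\ \partial_t u \end{pmatrix} = \mathcal{A} \begin{pmatrix} u \\ \partial_t u \end{pmatrix} + \frac{1}{\gamma^2}\begin{pmatrix} 0 \\ g(u) \end{pmatrix}, \qquad \begin{pmatrix} u(0) \\ \partial_t u(0) \end{pmatrix} = \begin{pmatrix} u^0 \\ u^1 \end{pmatrix},
\]
with $g(u)(x) := -\lambda\bigl(\varepsilon^2|\partial_x\psi_u(x,u(x))|^2 + |\partial_z\psi_u(x,u(x))|^2\bigr)$, and recall from Proposition~\ref{L1} that $g$ is locally Lipschitz continuous from $\{u\in H^{2+2\alpha}(I) : u>-1\}$ into $H^{2\alpha}(I)$ for every $\alpha\in(0,1/4)$.

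For part~(i), I would obtain local existence by applying the Banach fixed point theorem to Duhamel's formula on a closed ball in $C([0,T], H^{2+2\alpha}(I)\times H^{2\alpha}(I))$ of pairs $(v,w)$ satisfying $v\ge -1+\kappa$, where $\kappa>0$ is fixed by the minimum of $1+u^0$. Since $\mathcal{A}$ generates a strongly continuous semigroup on the clamped energy space which also acts on higher-regularity scales, the Lipschitz property of $g$ produces a contraction for $T$ small, and continuation to a maximal time $T_m\in (0,\infty]$ with blow-up alternative~\eqref{silbermond} is then standard. The inclusion $u\in C([0,T_m),H^2(I))\cap C^1([0,T_m),L_2(I))$ is built into this setup, and the $L_1$-integrability $\partial_t^k u\in L_1(0,T;H^{4-2k}(I))$ for $k=0,1,2$ follows by differentiating the Duhamel formula in time: the hypotheses $(u^0,u^1)\in H^5(I)\times H^3(I)$ together with the clamped compatibility conditions on $u^1$ ensure $(\partial_tu(0),\partial_t^2u(0))\in H^3(I)\times H^1(I)$, so the corresponding regularity propagates through the semigroup, and an application of the equation then upgrades to the claimed $H^4$-bound on $u$ in the $L_1$-sense.

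For part~(ii), the embedding $H^3(I)\times H^1(I)\hookrightarrow H^{2+2\alpha}(I)\times H^{2\alpha}(I)$, valid for every $\alpha\in(0,1/4)$, shows that the assumed bound on $\|u(t)\|_{H^3(I)}+\|\partial_tu(t)\|_{H^1(I)}$ together with $u(t)\ge -1+\kappa(T)$ excludes both scenarios of~\eqref{silbermond}, whence $T_m=\infty$. Part~(iii) would then rest on the hyperbolic analogue of Proposition~\ref{le.rc2},
\[
\frac{\gamma^2}{2}\|\partial_tu(t)\|_{L_2(I)}^2 + \mathcal{E}(u(t)) + \int_0^t\|\partial_tu(s)\|_{L_2(I)}^2\,\mathrm{d}s = \frac{\gamma^2}{2}\|u^1\|_{L_2(I)}^2 + \mathcal{E}(u^0),
\]
proved by the same shape-derivative computation as in the parabolic case. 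Combined with the control of $\mathcal{E}_e(u)$ by $\mathcal{E}_b(u)$ developed in Section~\ref{Sec6}, valid as long as $u$ stays above $-1+\kappa/2$, this would yield, for sufficiently small $\lambda$ and sufficiently small initial data, an a priori bound on $\|u(t)\|_{H^2(I)}$ and $\|\partial_tu(t)\|_{L_2(I)}$. The hard part is the subsequent bootstrap from this $H^2\times L_2$ control up to the $H^3\times H^1$ control needed to trigger part~(ii): since the beam equation does not regularize in time, this cannot be achieved by abstract interpolation and requires inserting the a priori bound back into the equation together with a sharpened Lipschitz estimate on $g$, arranged so that the smallness assumptions propagate rather than deteriorate over time. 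Once this bootstrap closes, part~(ii) delivers $T_m=\infty$ together with the uniform $H^2$-bound and the strict lower bound on $1+u$ claimed in part~(iii).
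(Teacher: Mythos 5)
Your parts (i) and (ii) track the paper: the paper also rewrites \eqref{CPP} as a first-order system, runs a contraction on Duhamel's formula in $\mathbb{H}_\alpha=H_D^{2+2\alpha}(I)\times H_D^{2\alpha}(I)$ with $2\alpha\in(0,1/2)$ (Lemma~\ref{P1}), upgrades to strong solutions for data in $D(\mathbb{A}_\alpha)=H_D^{4+2\alpha}(I)\times H_D^{2+2\alpha}(I)$ (Corollary~\ref{C31} -- note it does this via the Lipschitz-in-time property of mild solutions with data in the generator's domain, not by literally differentiating Duhamel, since $t\mapsto g(u(t))$ is not known to be differentiable a priori), and deduces (ii) from the embedding $H^3(I)\times H^1(I)\hookrightarrow \mathbb{H}_\alpha$, exactly as you propose.

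The genuine gap is in part (iii), where your route diverges from the paper's and does not close. The paper does \emph{not} use the energy identity here: it refines the fixed-point argument, exploiting the exponential decay \eqref{expdec} of the damped semigroup to get $\|\Lambda(\mathbf{u})(t)-\mathbf{u}_0\|_{\mathbb{H}_\alpha}\le \frac{M_\alpha}{\omega}\big(\|\mathbf{u}_0\|_{D(\mathbb{A}_\alpha)}+\lambda\sup\|g\|\big)$ uniformly in $t$, so that for small $\lambda$ and small data $\Lambda$ is a contraction of $\mathcal{V}_T$ for \emph{every} $T>0$; this yields $T_m=\infty$, the uniform $H^{2+2\alpha}$ bound, and $u(t)\ge -1+\kappa/2$ simultaneously, for every $\gamma>0$. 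Your energy-based alternative fails on three counts. First, the control of $\mathcal{E}_e$ by $\mathcal{E}_b$ in the hyperbolic case rests on the weighted $L_1$ bound of Lemma~\ref{le.rc3.b}, which the paper can only prove for $\gamma^2\le 1/(4\mu_1)$ (for larger $\gamma$ the representation of $X$ is oscillatory and the sign of $b$ cannot be used); Theorem~\ref{ThyperIntro}~(iii) is asserted for all $\gamma>0$, so your argument cannot reach it -- this restriction is precisely why the refined criterion is stated separately as Theorem~\ref{ThyperIntro.b}. Second, there is a circularity you do not resolve: the energy bound controls $\|u\|_{H^2}$ only \emph{as long as} $u\ge -1+\kappa/2$, and you never explain how the smallness of $\lambda$ and of the data forces that lower bound to persist; in the paper this is exactly what the uniform-in-$T$ self-map property of $\Lambda$ delivers. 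Third, the bootstrap you yourself flag as ``the hard part'' is left open, and it is aimed at the wrong target: the extension criterion actually produced by the fixed-point lemma is in $H^{2+2\alpha}\times H^{2\alpha}$ with $2\alpha<1/2$ (Proposition~\ref{ThyperIntroduction}~(ii)), which is attainable from an $H^2$ bound via Lemma~\ref{le.rc7} and Duhamel, whereas an $H^3\times H^1$ bound is not reachable this way, since $g$ only maps into $H^\sigma(I)$, $\sigma<1/2$, and the hyperbolic semigroup does not regularize. As written, part (iii) of your proposal is therefore not a proof.
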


The solution we construct is actually more regular under less regularity assumptions on the initial data, see Proposition~\ref{ThyperIntroduction} for a more precise statement. Next, if $\gamma$ is sufficiently small, we can prove, as in the parabolic case $\gamma=0$, that only the touchdown phenomenon \eqref{touchdown} may generate a finite time singularity. 

\begin{thm}\label{ThyperIntro.b}
There is $\gamma_0>0$ such that, if $\gamma\in (0,\gamma_0)$ and the initial value $(u^0,u^1)$ satisfies the assumptions of Theorem~\ref{ThyperIntro}, then the solution $(u,\psi)$ to \eqref{hyper1}-\eqref{psibc} obeys the following criterion for global existence: if for each $T>0$ there is $\kappa(T)\in (0,1)$ such that 
$$
u(t)\ge -1+\kappa(T)\ \text{ on } I
$$ 
for $t\in [0,T_m)\cap [0,T]$, then $T_m=\infty$.
\end{thm}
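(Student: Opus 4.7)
The plan is to show that, under the assumption $u(t)\ge -1+\kappa(T)$ on $[0,T_m)\cap[0,T]$, the solution automatically satisfies the stronger bound $\|u(t)\|_{H^{3}(I)}+\|\partial_t u(t)\|_{H^{1}(I)}\le C(T)$ required in the extension criterion of Theorem~\ref{ThyperIntro}(ii), provided $\gamma$ is small enough; this then yields $T_m=\infty$. The argument proceeds in two stages: first, a control of the $H^{2}$-norm of $u$ via the hyperbolic energy identity; second, a bootstrap to the target regularity by means of higher-order energy estimates on the time-differentiated equation. Only the second stage forces the smallness of $\gamma$.

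For the first stage I would establish the hyperbolic analog of Proposition~\ref{le.rc2}: testing \eqref{hyper1} against $\partial_t u$ and carrying out the same shape-derivative computation as in the parabolic case, the inertial term contributes an additional kinetic energy, giving
$$
\frac{\gamma^{2}}{2}\|\partial_t u(t)\|_{L_2(I)}^{2} + \mathcal{E}(u(t)) + \int_0^t \|\partial_t u(s)\|_{L_2(I)}^{2}\,\mathrm{d}s = \frac{\gamma^{2}}{2}\|u^{1}\|_{L_2(I)}^{2} + \mathcal{E}(u^{0})
$$
for $t\in[0,T_m)$. Combined with the estimate from Section~\ref{Sec6}, which controls $\lambda\mathcal{E}_e(u)$ by a fraction of $\mathcal{E}_b(u)$ plus a constant depending only on $\kappa(T)$, this yields $\mathcal{E}_b(u(t))\le C$, and hence $\|u(t)\|_{H^{2}(I)}\le C$ and $\gamma\|\partial_t u(t)\|_{L_2(I)}\le C$ on $[0,T_m)\cap[0,T]$.

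For the bootstrap stage I would differentiate \eqref{hyper1} in time, set $v=\partial_t u$, and test the resulting equation against $\partial_t v$. This produces an identity of the schematic form
$$
\frac{d}{dt}\left[\frac{\gamma^{2}}{2}\|\partial_t v\|_{L_2(I)}^{2} + \frac{\beta}{2}\|\partial_x^{2}v\|_{L_2(I)}^{2} + \frac{\tau}{2}\|\partial_x v\|_{L_2(I)}^{2}\right] + \|\partial_t v\|_{L_2(I)}^{2} = \int_{I}\partial_t[\mathrm{RHS}]\,\partial_t v\,\mathrm{d}x,
$$
where $\partial_t[\mathrm{RHS}]$ is expanded via the chain rule and the shape derivative of $\psi_u$, and then estimated using Proposition~\ref{L1} together with elliptic regularity for $\psi_u$ on $\Omega(u(t))$, all quantities being dominated by the $H^{2}$-bound on $u$ and by $\|v\|_{H^{1}(I)}$. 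Solving \eqref{hyper1} for $\partial_x^{4}u$ then upgrades $u$ to $L_\infty(0,T;H^{4}(I))$, which in particular gives the required $H^{3}$-bound.

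The principal obstacle lies in closing this second energy loop. Unlike in the parabolic case, where the smoothing of $\partial_t+\beta\partial_x^{4}$ makes the passage from $H^{2}$ to higher regularity automatic, the hyperbolic operator provides no such gain, and the bootstrap must rely entirely on the weak dissipation term $\|\partial_t v\|_{L_2(I)}^{2}$. A careful inspection of $\partial_t[\mathrm{RHS}]$ produces, after Cauchy--Schwarz, contributions of the form $c\gamma^{2}\|\partial_t v\|_{L_2(I)}^{2}$ on the right-hand side that can only be absorbed on the left when $c\gamma^{2}<1$; this is the origin of the threshold $\gamma_0$. A secondary technical point is the rigorous justification of the time differentiation of the electrostatic forcing at the prevailing regularity level, for which an approximation scheme analogous to that underlying Proposition~\ref{pr.z1} will be needed.
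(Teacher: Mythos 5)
Your first stage has a genuine gap, and it is precisely where the smallness of $\gamma$ actually enters. The estimate from Section~\ref{Sec6} that controls $\lambda\mathcal{E}_e(u)$ by a fraction of $\mathcal{E}_b(u)$ plus a constant (Lemma~\ref{le.rc6}) is \emph{not} a consequence of the lower bound $u\ge -1+\kappa(T)$ alone: by Lemma~\ref{le.rc5} one has $\mathcal{E}_e(u)\lesssim \kappa^{-1}\bigl(1+\varepsilon^2\|\partial_x u\|_{L_2}^2\bigr)$, and to absorb $\|\partial_x u\|_{L_2}^2$ into $\mathcal{E}_b(u)$ one uses the Poincar\'e-type inequality of Lemma~\ref{le.rc4}, whose remainder involves the weighted $L_1$-norm $\int_{-1}^1\zeta_1|u|\,\mathrm{d}x$. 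Since the fourth-order clamped operator admits no comparison principle and $\mathcal{E}$ is not coercive, there is no a priori upper bound on $u$, so this weighted $L_1$-norm must be estimated separately. In the hyperbolic case this is done by projecting \eqref{hyper1} onto the positive eigenfunction $\zeta_1$: $X(t)=\int\zeta_1 u\,\mathrm{d}x$ solves $\gamma^2 X''+X'+\mu_1X=-b$ with $b\ge0$, and the sign of $b$ can only be exploited when the ODE is overdamped, i.e. $\gamma^2\le 1/(4\mu_1)$ (Lemma~\ref{le.rc3.b}); this is the true origin of the threshold $\gamma_0$, not your second stage. Your claim that the constant in stage one depends "only on $\kappa(T)$" therefore overlooks a necessary a priori estimate, and your assertion that only stage two forces $\gamma$ small is incorrect relative to what can actually be proved.

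Your second stage is both unnecessary and not closed. The extension criterion you should invoke is the refined one of Proposition~\ref{ThyperIntroduction}~(ii), which only requires a bound on $\|u\|_{H^{2+2\alpha}}+\|\partial_t u\|_{H^{2\alpha}}$ with $2\alpha\in(0,1/2)$, not on $\|u\|_{H^{3}}+\|\partial_t u\|_{H^{1}}$. Once the $H^2$-bound on $u$ is available, Lemma~\ref{le.rc7} gives $\|g(u(t))\|_{H^{2\alpha}}\le C(\kappa_0)\bigl(1+\|u(t)\|_{H^2}^{44}\bigr)$, so $f(\mathbf{u})$ is bounded in $\mathbb{H}_\alpha$, and the Duhamel formula for the exponentially decaying semigroup $e^{-t\mathbb{A}_\alpha}$ bounds $\|\mathbf{u}(t)\|_{\mathbb{H}_\alpha}$ on $[0,T_0]\cap[0,T_m)$, precluding norm blowup; no time-differentiation of the equation, no higher-order energy identity, and no second smallness condition on $\gamma$ are needed. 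As written, your bootstrap would have to control $\partial_t g(u)$ (a nonlocal term of order roughly $3/2$ in $v=\partial_t u$) using only the dissipation $\|\partial_t v\|_{L_2}^2$, and the justification of differentiating the electrostatic forcing in time at this regularity is exactly the delicate point you defer; the claimed absorption for $c\gamma^2<1$ is speculative and unsubstantiated, so this stage constitutes a genuine gap rather than a proof.
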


The starting point of the proofs of Theorem~\ref{Alin}~(ii) and Theorem~\ref{ThyperIntro.b} is to derive an upper bound for $u$, which does not depend on $T_m$. This is obvious when $\gamma=\beta=0$ as the non-positivity of the right hand side of \eqref{hyper1} and the comparison principle guarantee that $u(t)\le \|u_0\|_\infty$ for $t\in [0,T_m)$. This is no longer true when $\beta>0$, and we instead derive a weighted $L_1$-estimate for~$u$, still using the non-positivity of the right hand side of \eqref{hyper1}. This seems to require damping to dominate over inertia effects and thus that $\gamma$ is sufficiently small. Otherwise, this estimate might fail to be true due to the oscillatory behavior of the beam equation, which could propagate large (negative) values of the right hand side of \eqref{hyper1} to large (positive) values of $u$. 

Finally, as in the parabolic case, we have an energy equality:

\begin{prop}\label{le.rc2hyper}
Under the assumptions of Theorem~\ref{ThyperIntro}, 
\begin{equation}
\mathcal{E}(u(t)) + \frac{\gamma^2}{2}\| \partial_t u(t)\|_{L_2(I)}^2+\int_0^t \|\partial_t u(s)\|_{L_2(I)}^2\ \mathrm{d}s = \mathcal{E}(u^0)+\frac{\gamma^2}{2}\| u^1\|_{L_2(I)}^2\ ,\quad t\in [0,T_m)\ . \label{rc2b}
\end{equation}
\end{prop}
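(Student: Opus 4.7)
The plan is to reproduce the argument used for the parabolic energy equality \eqref{rc2}, now keeping the inertial term $\gamma^2 \partial_t^2 u$. Formally, we multiply \eqref{hyper1} by $\partial_t u$ and integrate over $I$. The inertial term contributes $\tfrac{d}{dt}\bigl(\tfrac{\gamma^2}{2}\|\partial_t u\|_{L_2(I)}^2\bigr)$, the damping term contributes $\|\partial_t u\|_{L_2(I)}^2$, and two integrations by parts -- in which the boundary terms vanish thanks to the clamped conditions \eqref{hyper2} (both $u$ and $\partial_x u$, and hence $\partial_t u$ and $\partial_x \partial_t u$, vanish at $\pm 1$) -- transform $\int_I \beta \partial_x^4 u\,\partial_t u\,\mathrm{d}x$ and $\int_I (-\tau \partial_x^2 u)\partial_t u\,\mathrm{d}x$ into $\tfrac{d}{dt}\mathcal{E}_b(u)$ and $\tfrac{d}{dt}\mathcal{E}_s(u)$, respectively. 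Integrating the resulting identity from $0$ to $t$ then yields \eqref{rc2b}, provided we know that the right-hand side of \eqref{hyper1}, paired in $L_2(I)$ with $\partial_t u$, equals $-\lambda\,\tfrac{d}{dt}\mathcal{E}_e(u)$.

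This last identity is precisely the shape-derivative formula already needed in the parabolic setting and, according to the discussion preceding Proposition~\ref{le.rc2}, is supplied by Proposition~\ref{pr.z1}: the map $t \mapsto \mathcal{E}_e(u(t))$ is absolutely continuous with
\begin{equation*}
\frac{d}{dt} \mathcal{E}_e(u(t)) = \int_I \bigl( \varepsilon^2 |\partial_x \psi_u(x,u(t,x))|^2 + |\partial_z \psi_u(x,u(t,x))|^2 \bigr)\, \partial_t u(t,x) \,\mathrm{d}x .
\end{equation*}
Combined with the integration-by-parts identities above, this immediately gives \eqref{rc2b}.

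The main technical obstacle is the limited time regularity supplied by Theorem~\ref{ThyperIntro}(i), namely $u \in C([0,T_m),H^2(I))\cap C^1([0,T_m),L_2(I))$ with $\partial_t^k u \in L_1(0,T;H^{4-2k}(I))$ for $k=0,1,2$. In particular $\partial_t^2 u$ is merely $L_1(0,T;L_2(I))$, so the pairing $\int_I \partial_t^2 u\,\partial_t u\,\mathrm{d}x$ and its identification with $\tfrac{1}{2}\tfrac{d}{dt}\|\partial_t u\|_{L_2(I)}^2$ must be justified in the distributional sense rather than pointwise in $t$, and the same care is needed for the hypotheses of Proposition~\ref{pr.z1}. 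The standard cure, already used in the proof of Proposition~\ref{le.rc2}, is an approximation: one mollifies the initial pair $(u^0,u^1)$ (respecting the boundary conditions), solves the corresponding smoother problem for which all the computations above are directly legitimate and Proposition~\ref{pr.z1} applies without change, establishes \eqref{rc2b} for the regularized solutions, and passes to the limit using the continuous dependence of $\mathcal{E}_b$, $\mathcal{E}_s$, and $\mathcal{E}_e$ on $u$ in the relevant Sobolev topologies together with the convergence of the regularized solutions granted by the well-posedness theory. Beyond this routine but delicate regularization, the hyperbolic case introduces no new conceptual difficulty relative to the parabolic one; the only genuinely new contribution to \eqref{rc2b} is the kinetic term $\tfrac{\gamma^2}{2}\|\partial_t u\|_{L_2(I)}^2$.
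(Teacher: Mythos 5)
Your overall structure (multiply \eqref{hyper1} by $\partial_t u$, integrate by parts using the clamped conditions, and identify the contribution of the right-hand side with $-\lambda\frac{\rd}{\rd t}\mathcal{E}_e(u)$ via the shape-derivative formula of Proposition~\ref{pr.z1}) is the same as the paper's, and you correctly identify the limited time regularity of $u$ as the obstacle. The gap lies in the regularization you propose to overcome it. You mollify the initial data $(u^0,u^1)$ and assert that for the corresponding solutions ``Proposition~\ref{pr.z1} applies without change.'' It does not: Proposition~\ref{pr.z1} requires $\bar u\in C^1([0,T],H_D^{1+2\nu}(I))$ for some $\nu>0$, i.e.\ $\partial_t\bar u$ continuous with values in $H^{1+2\nu}(I)$, whereas the hyperbolic problem has no smoothing effect and the nonlinearity caps the attainable regularity: by Proposition~\ref{L1}, $g$ maps into $H_D^{4\sigma}(I)$ only for $4\sigma<1/2$, so the functional framework of Proposition~\ref{ThyperIntroduction} forces $2\alpha<1/2$ and yields, even for $C^\infty$ initial data, only $\partial_t u\in C([0,T_m),H_D^{2\alpha}(I))$ (plus $\partial_t u\in L_1(0,T;H_D^{2+2\alpha}(I))$), far below the required $C([0,T],H^{1+2\nu}(I))$. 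The same cap occurs in the parabolic case, since $\partial_t u=-Au-\lambda g(u)$ cannot be spatially smoother than $g(u)$; in particular, your recollection that the proof of Proposition~\ref{le.rc2} proceeds by mollifying the data is not what the paper does. So the approximating solutions in your scheme are no more amenable to Proposition~\ref{pr.z1} than the original one, and the argument does not close.

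The paper's fix is to regularize the solution in time rather than the data: one introduces the Steklov averages $u_\delta(t)=\delta^{-1}\int_t^{t+\delta}u(s)\,\rd s$, which belong to $C^1([0,T],H_D^{2+2\nu}(I))$ precisely because $u\in C([0,T_m),H_D^{2+2\alpha}(I))$ and $\partial_t u_\delta(t)=(u(t+\delta)-u(t))/\delta$, so that Proposition~\ref{pr.z1} applies to $u_\delta$; one then passes to the limit $\delta\to 0$ in the resulting identity using $u_\delta\to u$ in $C([0,T],H_D^{2+2\nu}(I))$, $g(u_\delta)\to g(u)$ in $C([0,T],L_2(I))$, and $\partial_t u_\delta\to\partial_t u$ in $L_2((0,T)\times I)$ (the last convergence being immediate here since $\partial_t u\in C([0,T],L_2(I))$ when $\gamma>0$). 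The remaining kinetic and mechanical terms are handled by the classical chain-rule argument, legitimate thanks to $\partial_t^k u\in L_1(0,T;H_D^{4+2\alpha-2k}(I))$, $k=0,1,2$. If you replace your mollification step by this Steklov-average argument (or by any approximation of the solution itself that preserves its spatial regularity while upgrading its time regularity), the rest of your computation goes through.
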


We shall point out that, on physical grounds, the maximal existence time $T_m$ is expected to be finite for large values of~$\lambda$. In this direction, let us recall that a classical technique to investigate the possible occurrence of finite time singularities is the so-called eigenfunction technique. Owing to the nonlocal character of the right hand side of \eqref{hyper1}, this technique does not seem to be appropriate here, but a nonlinear variant thereof introduced in \cite{ELW1} has proven  to be successful and allowed us to show the finiteness of $T_m$ for sufficiently large $\lambda$ in the second order parabolic case, that is, when $\gamma=\beta=0$. We have yet been unable to develop it further to achieve a similar result when $(\gamma,\beta)\ne (0,0)$, in particular when $\beta>0$, the fourth-order problem under investigation herein. The main difficulties are, on the one hand, that the comparison principle is no longer valid and there is no \textit{ a priori} upper bound on $u$. On the other hand, there are terms resulting from integration by parts involving the fourth-order derivative $\beta \partial_x^4 u$, which cannot be controlled in a suitable way.

However, a modification of the technique introduced in \cite{ELW1} proves to be useful for the stationary problem with $\beta>0$, leading us to a non-existence result for large values of $\lambda$ as explained in the following subsection. 

\subsection{Steady States}

We next consider time independent solutions and show that, as expected from physics, such solutions exist for $\lambda$ sufficiently small and do not exist for $\lambda$ large, the latter being true provided $\varepsilon$ is small.

\begin{thm}[{\bf Steady State Solutions}]\label{TStable2}
\begin{itemize}
\item[(i)] There is $\lambda_s>0$ such that for each $\lambda\in (0,\lambda_s)$ there exists an asymptotically stable steady state $(U_\lambda,\Psi_\lambda)$ to \eqref{hyper1}-\eqref{psibc} with $U_\lambda\in H^4(I)$ satisfying $-1<U_\lambda< 0$ in $I$ and  $\Psi_\lambda\in H^2(\Omega(U_\lambda))$.
\item[(ii)] There are $\ve_*>0$ and ${\lambda_c}: (0,\ve_*)\to (0,\infty)$ such that there is no steady state $(u,\psi)$ to \eqref{hyper1}-\eqref{psibc} for $\ve\in (0,\ve_*)$ and $\lambda>{\lambda_c}(\ve)$.
\end{itemize}
\end{thm}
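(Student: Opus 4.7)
The plan is to construct $U_\lambda$ by the implicit function theorem around the trivial solution $(\lambda, U) = (0, 0)$, which corresponds to $\Psi_0(x, z) = 1 + z$. With $X := \{ v \in H^4(I) : v(\pm 1) = \partial_x v(\pm 1) = 0 \}$, define
\begin{equation*}
F(\lambda, U) := \beta \partial_x^4 U - \tau \partial_x^2 U + \lambda \big( \varepsilon^2 |\partial_x \psi_U(\cdot, U)|^2 + |\partial_z \psi_U(\cdot, U)|^2 \big)
\end{equation*}
on the open subset $\{ U > -1 \}$ of $\mathbb{R} \times X$, where $\psi_U$ solves \eqref{psi}-\eqref{psibc}. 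By Proposition~\ref{L1} and the smooth $U$-dependence of $\psi_U$ established earlier in the paper, $F$ is smooth with $F(0, 0) = 0$, and $\partial_U F(0, 0) = \beta \partial_x^4 - \tau \partial_x^2 : X \to L_2(I)$ is an isomorphism. The IFT produces a smooth branch $U_\lambda$ with $U_\lambda \to 0$ in $H^4(I)$, so that $U_\lambda > -1$ on $\bar{I}$ for $\lambda$ small; the leading-order expansion $U_\lambda = -\lambda\, G[1] + O(\lambda^2)$ with $G$ the inverse of the clamped beam operator then gives $U_\lambda < 0$ in $I$ (since $G[1] > 0$ in $I$ by explicit 1D computation). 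For asymptotic stability, the linearization $\mathcal{A}_\lambda := \beta \partial_x^4 - \tau \partial_x^2 + \lambda\, \mathrm{D}g(U_\lambda)$, with $g(U) := \varepsilon^2 |\partial_x \psi_U(\cdot, U)|^2 + |\partial_z \psi_U(\cdot, U)|^2$, is a relatively compact perturbation of the self-adjoint positive operator $\beta \partial_x^4 - \tau \partial_x^2$, whose first eigenvalue $\mu_1 > 0$ lies in the right half-plane. Standard spectral perturbation keeps the spectrum of $\mathcal{A}_\lambda$ in $\{ \mathrm{Re}\, z \geq \mu_1 / 2 \}$ for small $\lambda$. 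The principle of linearized stability then delivers asymptotic stability in the parabolic case; in the damped hyperbolic case, the characteristic roots $z$ of $\gamma^2 z^2 + z + \mu = 0$ for each $\mu$ in the spectrum of $\mathcal{A}_\lambda$ all satisfy $\mathrm{Re}\, z < 0$, yielding the same conclusion.

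\textbf{Plan for (ii).} The plan is to adapt the nonlinear eigenfunction argument of \cite{ELW1} to the clamped fourth-order setting. Let $\phi_1 > 0$ denote the first eigenfunction of $\beta \partial_x^4 - \tau \partial_x^2$ on $I$ with clamped boundary conditions and eigenvalue $\mu_1 > 0$, normalized so that $\int_I \phi_1 \, \mathrm{d}x = 1$ (positivity of $\phi_1$ in 1D follows from the sign of the clamped beam Green's function). Testing the steady-state equation against $\phi_1$ and integrating by parts twice (boundary terms vanish because both $u$ and $\phi_1$ are clamped) gives
\begin{equation*}
\mu_1\, m = -\lambda \int_I \big( \varepsilon^2 |\partial_x \psi_u|^2 + |\partial_z \psi_u|^2 \big)\!(x, u(x))\, \phi_1(x)\, \mathrm{d}x, \qquad m := \int_I u\, \phi_1\, \mathrm{d}x,
\end{equation*}
with $m \in (-1, 0)$ (the upper bound from the sign of the right-hand side; the lower bound from $u > -1$ and the normalization of $\phi_1$). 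If one can establish a pointwise lower bound
\begin{equation*}
|\partial_z \psi_u(x, u(x))|^2 \geq \frac{c_0(\varepsilon)}{(1 + u(x))^2}, \qquad x \in I,
\end{equation*}
with $c_0(\varepsilon) > 0$ uniform over admissible $u$ for $\varepsilon$ small enough, then Jensen's inequality applied to the convex function $s \mapsto 1/(1+s)^2$ against the probability measure $\phi_1\, \mathrm{d}x$ gives $\int_I \phi_1/(1+u)^2\, \mathrm{d}x \geq 1/(1+m)^2$, whence $\mu_1\, m(1+m)^2 \leq -c_0(\varepsilon)\, \lambda$. Since $\min_{m \in [-1, 0]} m(1+m)^2 = -4/27$ (attained at $m = -1/3$), this forces $\lambda \leq 4\mu_1 / (27 c_0(\varepsilon)) =: \lambda_c(\varepsilon)$, so no steady state can exist for $\lambda > \lambda_c(\varepsilon)$.

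\textbf{Main obstacle.} The hard step is establishing the pointwise lower bound on $|\partial_z \psi_u(x, u(x))|^2$ at the moving top boundary, uniformly in $u > -1$ and for $\varepsilon$ small. A bulk bound $\int_{-1}^{u(x)} (\partial_z \psi_u)^2\, \mathrm{d}z \geq 1/(1+u(x))$ is immediate from Cauchy-Schwarz applied to the identity $1 = \int_{-1}^{u(x)} \partial_z \psi_u\, \mathrm{d}z$, but it controls only the $L_2$-norm in $z$, not the trace at $z = u(x)$. My plan is to compare $\psi_u$ with the small aspect ratio profile $\psi_0(x, z) := (1+z)/(1+u(x))$: the correction $\varphi := \psi_u - \psi_0$ satisfies $\partial_z^2 \varphi = -\varepsilon^2 \partial_x^2 \psi_u$ with $\varphi = 0$ on both the top and bottom of $\Omega(u)$, so integrating twice in $z$ bounds $\partial_z \varphi(x, u(x))$ by $O(\varepsilon^2)$ times a suitable norm of $\partial_x^2 \psi_u$. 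Estimating the latter uniformly in $u > -1$ requires the fixed-domain transformation $\Omega(u) \to I \times (-1, 0)$ already used elsewhere in the paper, together with careful tracking of the dependence of the transported elliptic problem on $\| u \|$ and on $\inf(1 + u)$; this is where the bulk of the technical work will sit. Once this is achieved, $|\partial_z \psi_u(x, u(x))| \geq (1 - C \varepsilon^2)/(1+u(x))$ for $\varepsilon$ small yields the desired $c_0(\varepsilon)$ and closes the argument.
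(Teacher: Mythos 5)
Part~(i) of your proposal follows essentially the paper's own route (implicit function theorem at $\lambda=0$ plus the principle of linearized stability), and is fine; the only remark is that the paper obtains $-1<U_\lambda<0$ more directly, by applying the sign-preserving property of the clamped operator $\beta\partial_x^4-\tau\partial_x^2$ (\cite{LaurencotWalker_JAM}) to $\beta\partial_x^4U_\lambda-\tau\partial_x^2U_\lambda=-\lambda g(U_\lambda)\le 0$, which avoids having to beat the $O(\lambda^2)$ remainder against the degenerate boundary behavior of $G[1]$ near $x=\pm1$ in your expansion argument.

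Part~(ii) contains a genuine gap, and it is exactly the step you flag as the ``main obstacle'': the pointwise lower bound $|\partial_z\psi_u(x,u(x))|^2\ge c_0(\varepsilon)/(1+u(x))^2$ \emph{uniformly over all admissible} $u$. For the steady states you are trying to rule out there is no a priori control whatsoever on $\partial_x u$ or $\partial_x^2 u$ (only $-1<u<0$ is known), so your correction argument does not close: $\varphi=\psi_u-(1+z)/(1+u)$ satisfies $\partial_z^2\varphi=-\varepsilon^2\partial_x^2\psi_u$, and in the fixed-domain formulation the coefficients of $\mathcal{L}_u$ in \eqref{acdc} involve $\partial_x u/(1+u)$ and $\partial_x^2 u/(1+u)$; hence the ``$O(\varepsilon^2)$'' error is $\varepsilon^2$ times quantities that can be arbitrarily large, and no bound on $\partial_x^2\psi_u$ uniform in $u>-1$ exists. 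The claimed trace bound is in fact false without derivative control: rescaling $x\mapsto x/\varepsilon$ turns \eqref{psi} into the Laplace equation, and a dip of $u$ toward $-1$ whose width is small compared with $\varepsilon$ times its depth acts as a deep narrow slot into which the potential barely penetrates, so that $(1+u(x))\,\partial_z\psi_u(x,u(x))$ can be driven far below any fixed constant $1-C\varepsilon^2$. So the lemma you would need encodes the entire difficulty of the problem and cannot be obtained by the comparison with the small-aspect-ratio profile.

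The paper's proof is organized precisely to avoid any pointwise lower bound on the trace of $\partial_z\psi$ at the moving boundary. After testing \eqref{ne1} with the modified test function $(1+\alpha\mathcal{U})\zeta_1$, where $-\partial_x^2\mathcal{U}=u$, one obtains \eqref{ne9}, whose extra term $-\alpha\beta\int_{-1}^1\zeta_1|\partial_x u|^2\,\mathrm{d}x$ is there to absorb the uncontrolled gradient contributions later on. Then Young's inequality replaces the quadratic boundary quantity $\int\zeta_1(1+\varepsilon^2(\partial_xu)^2)\gamma_m^2\,\mathrm{d}x$ by the \emph{linear} one $\int\zeta_1(1+\varepsilon^2(\partial_xu)^2)\gamma_m\,\mathrm{d}x$; the identity of Lemma~\ref{le.n2} (multiply \eqref{psi} by $\zeta_1\psi$ and use Green's formula) converts this linear trace term into the bulk energy $\int_{\Omega(u)}\zeta_1\left[\varepsilon^2|\partial_x\psi|^2+|\partial_z\psi|^2\right]\mathrm{d}(x,z)$ plus errors of size $\varepsilon^2\|\partial_x^2\zeta_1\|_{L_1(I)}$; and Lemma~\ref{le.n1} (Cauchy--Schwarz applied to $\int\partial_z\psi\,\mathrm{d}z=1$ on vertical segments) bounds that bulk energy from below by $\int_{-1}^1\zeta_1/(1+u)\,\mathrm{d}x\ge 1$ — this is the rigorous substitute for your pointwise bound. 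Optimizing the Young parameter and choosing $\alpha=\min\{1,\varepsilon^2\}$ yields \eqref{ne15}, with the smallness condition on $\varepsilon$ entering only through $1-\varepsilon^2\|\partial_x^2\zeta_1\|_{L_1(I)}>0$, i.e. $\varepsilon_*=\|\partial_x^2\zeta_1\|_{L_1(I)}^{-1/2}$. If you want to salvage your write-up, replace your trace lemma by this Young/Green/Cauchy--Schwarz chain and add the $(1+\alpha\mathcal{U})$ correction to your test function $\phi_1$, since testing with $\phi_1$ alone leaves the $\varepsilon^2\int\zeta_1(\partial_xu)^2$ terms unabsorbed.
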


We postpone a more precise statement and its proof to Section~\ref{Sec4}. Let us just mention that the existence of steady states for small values of $\lambda$ along with their asymptotic stability follows from the implicit function theorem and the principle of linearized stability, respectively. The non-existence is proved by a nonlinear variant of the eigenfunction method mentioned above. In this direction, we recall that a salient feature of the operator $\beta \partial_x^4 - \tau \partial_x^2$ in $H_D^4(I)$ is that it has a positive eigenfunction associated to its positive principal eigenvalue \cite{Gr02,LaurencotWalker_JAM,Ow97}. 

\section{Auxiliary Results}\label{Sec2a} 

In order to state precisely our existence results, we first introduce the (subspaces of) Bessel potential spaces $H_\B^{4\theta}(I)$ including clamped boundary conditions, if meaningful, by setting
$$
H_\B^{4\theta}(I):=\left\{\begin{array}{lll}
& \{v\in H^{4\theta}(I)\,;\, v(\pm 1)=\partial_x v(\pm 1)=0\}\ , & 4\theta>\dfrac{3}{2}\ ,\\
& \{v\in H^{4\theta}(I)\,;\, v(\pm 1)=0\}\ , & \dfrac{1}{2}<4\theta<\dfrac{3}{2}\ ,\\
&   H^{4\theta}(I)\ , & 4\theta<\dfrac{1}{2}\ .
\end{array}
\right.
$$
Note that the spaces $H_{\B}^{4\theta}(I)$  coincide with the complex interpolation spaces	
\bqn\label{interpol}
H_\B^{4\theta}(I) =\big[ L_2(I), H_\B^{4}(I)\big]_\theta\ ,\quad \theta\in [0,1]\setminus\left\{\frac{1}{8} , \frac{3}{8}\right\}\ ,
\eqn
except for equivalent norms, see \cite[Theorem 4.3.3]{Triebel}.

\medskip

We shall first recall properties of solutions to the Laplace equation \eqref{psi}-\eqref{psibc} in dependence of a given (free) boundary described by a function $u(t):[-1,1]\rightarrow (-1,\infty)$ for a fixed time~$t$. For that purpose we transform the free boundary problem \eqref{psi}-\eqref{psibc} to the fixed rectangle \mbox{$\Omega:=I\times (0,1)$}. More precisely, for a sufficiently smooth function $v:[-1,1]\rightarrow (-1,\infty)$ with $v(\pm 1)=0$, we define a diffeomorphism \mbox{$T_v:\overline{\Omega(v)}\rightarrow \overline{\Omega}$} by setting
\begin{equation}\label{Tu}
T_v(x,z):=\left(x,\frac{1+z}{1+v(x)}\right)\ ,\quad (x,z)\in \overline{\Omega(v)}\ ,
\end{equation}
with $\Omega(v) = \left\{ (x,z)\in I\times (-1,\infty)\ ;\quad -1 < z < v(x) \right\}$. Clearly, its inverse is
\begin{equation}\label{Tuu}
T_v^{-1}(x,\eta)=\big(x,(1+v(x))\eta-1\big)\ ,\quad (x,\eta)\in \overline{\Omega}\ ,
\end{equation}
and the Laplace operator $\ve^2\partial_x^2+\partial_z^2$ is transformed to the $v$-dependent differential operator $\mathcal{L}_v$ given by
\begin{equation}
\begin{split}
\mathcal{L}_v w\, :=\, & \e^2\ \partial_x^2 w - 2\e^2\ \eta\ \frac{\partial_x v(x)}{1+v(x)}\ \partial_x\partial_\eta w
+ \frac{1+\e^2\eta^2(\partial_x v(x))^2}{(1+v(x))^2}\ \partial_\eta^2 w\\
& + \e^2\ \eta\ \left[ 2\ \left(\frac{\partial_x v(x)}{1+v(x)} \right)^2 - \frac{\partial_x^2 v(x)}{1+v(x)} \right]\ \partial_\eta w\ .
\end{split} \label{acdc}
\end{equation}
Next, defining for $4\theta>2$ and $\kappa\in (0,1)$ the open subset
\begin{equation}
S_{\theta}(\kappa):=\left\{v\in H_\B^{4\theta}(I)\,;\, \|v\|_{H_\B^{4\theta}(I)}< 1/\kappa \;\;\text{ and }\;\; -1+\kappa< v(x) \text{ for } x\in I \right\} \label{setST}
\end{equation}
of $H_\B^{4\theta}(I)$ with closure 
$$
\overline{S}_{\theta}(\kappa)=\left\{v\in H_\B^{4\theta}(I)\,;\, \|v\|_{H_\B^{4\theta}(I)}\le 1/\kappa \;\;\text{ and }\;\; -1+\kappa\le v(x) \text{ for } x\in I \right\}\ ,
$$ 
we first collect crucial properties of the solution $\phi=\phi_v$ to the elliptic boundary value problem
\begin{eqnarray}
\big(\mathcal{L}_v \phi\big) (x,\eta)\!\!\!&=0\ ,&(x,\eta)\in\Omega\ ,\label{230}\\
\phi(x,\eta)\!\!\!&=\eta\ , &(x,\eta)\in \partial\Omega\ ,\label{240}
\end{eqnarray}
in dependence of a given $v\in\overline{S}_{\theta}(\kappa)$:

\begin{prop}\label{L1}
Let $4\theta>2$ and $\kappa\in (0,1)$. Then, for each $v\in\overline{S}_{\theta}(\kappa)$ there is a unique solution $\phi=\phi_v\in H^2(\Omega)$ to \eqref{230}-\eqref{240},
and there is a constant $c(\kappa)>0$ such that
\begin{equation*}
\|\phi_{v_1}-\phi_{v_2}\|_{H^2(\Omega)} \le c(\kappa)\, \|v_1-v_2\|_{H^{4\theta}(I)}\ ,\quad  v_1,\ v_2\in\overline{S}_{\theta}(\kappa)\ .
\end{equation*}
Moreover, for $4\sigma\in [0,1/2)$, the mapping 
$$
g: S_{\theta}(\kappa)\longrightarrow H_\B^{4\sigma}(I)\ ,\quad v\longmapsto \frac{1+\e^2(\partial_x v)^2}{(1+v)^2}\  \vert\partial_\eta\phi_v(\cdot,1)\vert^2
$$
is analytic, bounded, and uniformly Lipschitz continuous. 
\end{prop}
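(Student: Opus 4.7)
\emph{Existence, $H^2$-regularity, and the Lipschitz estimate.} Since $4\theta>2$ and $v\in\overline{S}_\theta(\kappa)$, the embedding $H_\B^{4\theta}(I)\hookrightarrow C^1(\bar I)$ gives uniform bounds on $v$ and $\partial_x v$, while the constraint $v\ge -1+\kappa$ keeps $1+v$ bounded away from zero, and a direct inspection of \eqref{acdc} shows that the principal symbol of $\ml_v$ satisfies an ellipticity estimate uniform on $\overline{S}_\theta(\kappa)$, with $L_\infty(\Omega)$ principal coefficients (the coefficient of $\partial_\eta$ lies only in $L_2(\Omega)$ since it involves $\partial_x^2 v\in L_2(I)$). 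Writing $\phi_v=\eta+w$ with $w\in H^1_0(\Omega)$ reduces \eqref{230}-\eqref{240} to $\ml_v w=-\ml_v\eta\in L_2(\Omega)$, which I would solve by recasting $\ml_v$ in divergence form---natural since $\ml_v$ is the pullback of $\e^2\partial_x^2+\partial_z^2$ under $T_v$---and applying Lax--Milgram in $H^1_0(\Omega)$, followed by the $H^2$-regularity theorem for second-order elliptic equations on the convex polygon $\Omega$ with Dirichlet data to obtain $\phi_v\in H^2(\Omega)$ with a bound depending only on $\kappa$. The Lipschitz estimate then follows by observing that $\phi_{v_1}-\phi_{v_2}\in H^1_0(\Omega)$ satisfies $\ml_{v_1}(\phi_{v_1}-\phi_{v_2})=(\ml_{v_2}-\ml_{v_1})\phi_{v_2}$ and estimating the right-hand side in $L_2(\Omega)$ by $c(\kappa)\|v_1-v_2\|_{H^{4\theta}(I)}\|\phi_{v_2}\|_{H^2(\Omega)}$, using that the coefficient map $v\mapsto\text{coeff}(\ml_v)$ is locally Lipschitz from $H_\B^{4\theta}(I)$ into the relevant $L_\infty$ and $L_2$ spaces.

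\emph{Analyticity and the map $g$.} To obtain analyticity of $v\mapsto\phi_v$ on the open set $S_\theta(\kappa)$, I would apply the analytic implicit function theorem to
$$
F(v,w):=\ml_v(\eta+w)\ ,\qquad F:S_\theta(\kappa)\times\bigl(H^1_0(\Omega)\cap H^2(\Omega)\bigr)\longrightarrow L_2(\Omega)\ ,
$$
which is analytic in $v$ (polynomial in $v,\partial_x v,\partial_x^2 v$ and rational in $1+v$, with $H_\B^{4\theta}(I)$ a Banach algebra and $1+v\ge\kappa$) and whose $w$-derivative at $(v,\phi_v-\eta)$ is the isomorphism $\ml_v:H^1_0(\Omega)\cap H^2(\Omega)\to L_2(\Omega)$. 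Composing with the bounded trace $H^1(\Omega)\to H^{1/2}(I)$ applied to $\partial_\eta\phi_v$ at $\eta=1$ yields an analytic map $v\mapsto\partial_\eta\phi_v(\cdot,1)\in H^{1/2}(I)$. For $4\sigma<1/2$, the one-dimensional product estimate $H^{1/2}(I)\cdot H^{1/2}(I)\hookrightarrow H^{4\sigma}(I)$ (valid since $1/2+1/2-1/2>4\sigma$ and $1/2>4\sigma$) makes $v\mapsto|\partial_\eta\phi_v(\cdot,1)|^2\in H^{4\sigma}(I)$ analytic, and multiplication by the analytically $v$-dependent factor $(1+\e^2(\partial_x v)^2)/(1+v)^2\in H^{4\theta-1}(I)\hookrightarrow C^1(\bar I)$ preserves $H^{4\sigma}(I)$-regularity. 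All estimates are $\kappa$-uniform, yielding boundedness and uniform Lipschitz continuity of $g$ on $\overline{S}_\theta(\kappa)$.

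\emph{Anticipated obstacle.} The main difficulty is the $H^2$-regularity of $\phi_v$ up to the corners of $\Omega$: although the Dirichlet datum $\eta|_{\partial\Omega}$ is continuous, its tangential derivative jumps at each corner, and the $\partial_\eta$-coefficient of $\ml_v$ is only in $L_2(\Omega)$, so the classical $H^2$-regularity theory (which typically requires $L_\infty$ lower-order coefficients and data with better corner compatibility) does not apply off the shelf. I would circumvent this by exploiting the divergence form of $\ml_v$ inherited from the Laplacian on $\Omega(v)$ through $T_v$, together with a convex-polygon $H^2$-regularity result, and---if needed---first regularize $v$ by smooth approximants, solve the smoothed problem, and pass to the limit by means of the Lipschitz estimate.
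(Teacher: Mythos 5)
Note first that the paper does not reprove this statement at all: its ``proof'' is the one--line reduction to \cite[Proposition~2.1 \& Equation~(38)]{ELW1} via the embedding $H_\B^{4\theta}(I)\hookrightarrow W_q^2(I)$ for suitable $q>2$. What you propose is essentially a reconstruction of that cited result, and your overall architecture (subtract $\eta$, pass to the divergence form \eqref{Ldiv} inherited from the Dirichlet integral on $\Omega(v)$, elliptic $H^2$ theory on the rectangle, analyticity of $v\mapsto\phi_v$ by the implicit function theorem, then trace $H^2(\Omega)\to H^{1/2}(I)$ and the multiplication $H^{1/2}\cdot H^{1/2}\hookrightarrow H^{4\sigma}$) is indeed the right skeleton. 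The difficulty is that the step you defer to the ``anticipated obstacle'' paragraph is precisely the mathematical core of the cited reference, and your proposed circumvention does not close it.

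Concretely, the gap is the \emph{uniform} $H^2(\Omega)$ solvability estimate for $\mathcal{L}_v$, $v\in\overline{S}_\theta(\kappa)$. Since $4\theta>2$ may be arbitrarily close to $2$, the coefficients of \eqref{Ldiv} are only $W_q^1$ in $x$ (with $q>2$ close to $2$): $\partial_x v$ is H\"older but in general not Lipschitz, and the non-divergence lower-order coefficient in \eqref{acdc} contains $\partial_x^2 v\in L_q(I)$ only. Hence neither the classical non-divergence $H^2$ theory nor an off-the-shelf ``convex-polygon $H^2$ result'' for divergence-form operators applies directly: to absorb the terms $(\mathrm{div}\,A)\cdot\nabla\phi_v$ and $\partial_x^2 v\,\partial_\eta\phi_v$ into an $L_2(\Omega)$ right-hand side one first needs $\nabla\phi_v\in L_p(\Omega)$ for some large $p$ (a Meyers-type/$W^1_p$ estimate for continuous-coefficient divergence operators), and only then a corner analysis (here the clamped conditions $v(\pm1)=\partial_x v(\pm1)=0$ make the frozen principal part at the corners $\varepsilon^2\partial_x^2+\partial_\eta^2$, so the convex corners are harmless). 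You supply neither ingredient, and your fallback --- regularize $v$, solve, and pass to the limit ``by means of the Lipschitz estimate'' --- is circular: the Lipschitz estimate you derive is itself an application of the uniform $H^2$ a priori estimate for $\mathcal{L}_{v_1}$ together with a bound on $\|\phi_{v_2}\|_{H^2(\Omega)}$, so it cannot transfer $H^2$ bounds from smooth approximants unless those bounds are already uniform in the smoothing parameter, which is exactly what is missing; the same estimate is also what makes $D_wF$ an isomorphism in your implicit-function-theorem step. Two smaller points: Lax--Milgram does not apply as stated to \eqref{Ldiv} because of the indefinite first-order terms $b_1\partial_x+b_2\partial_\eta$; coercivity is recovered by testing with $(1+v)w$, i.e. by pulling back the Dirichlet integral on $\Omega(v)$ (compare the computation in Proposition~\ref{pr.z1} and \cite[Lemma~2.2]{ELW2}). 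And $H^{4\theta-1}(I)$ need not embed into $C^1(\bar I)$ when $4\theta\le 5/2$; this is harmless, since being a pointwise multiplier on $H^{4\sigma}(I)$ only requires membership in $H^s(I)$ for some $s>1/2$, but the claim as written is inaccurate.
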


\begin{proof}
This follows from \cite[Proposition~2.1 \& Equation~(38)]{ELW1} by noticing that $H_\B^{4\theta}(I)\hookrightarrow W_q^2(I)$, with $q\in (2,\infty)$ chosen such that $4\theta>5/2-1/q$.
\end{proof}

Let now $u$ be a time-dependent function with $u(t)\in H_\B^{4\theta}(I)$ and  $-1+\kappa< u(t,x)$ for $x\in I $ and $t\ge 0$. Then, with the notation above, a function $\psi(t)=\psi_{u(t)}$ solves the boundary value problem \eqref{psi}-\eqref{psibc} if and only if $\phi(t)=\phi_{u(t)}=\psi(t)\circ T_{u(t)}^{-1}$ solves
\begin{eqnarray}
 \big(\mathcal{L}_{u(t)}\phi(t)\big) (x,\eta)\!\!\!&=0\ ,&(x,\eta)\in\Omega\ , \quad t>0\ ,\label{23}\\
\phi(t,x,\eta)\!\!\!&=\eta\ , &(x,\eta)\in \partial\Omega\ , \quad t>0\ .\label{24}
\end{eqnarray}
Observe that $\psi(t)=\psi_{u(t)}\in H^2(\Omega(u(t)))$ by Proposition~\ref{L1} and that regarding the right hand side of equation~\eqref{hyper1} we have the relation
$$
-\lambda  \big(\ve^2 \vert\partial_{x} \psi(t,\cdot,u(t,\cdot)) \vert^2 + \vert\partial_z \psi(t,\cdot,u(t,\cdot)) \vert^2\big)  = -\lambda\, \frac{1+\e^2(\partial_x u(t,\cdot))^2}{(1+u(t,\cdot))^2} \ \vert\partial_\eta\phi(t,\cdot,1)\vert^2 =-\lambda\, g(u(t))\ ,
$$
since $\partial_x\phi(t,x,1)=0$ for $x\in I$ due to $\phi(t,x,1)=1$ by \eqref{24}. Let us point out that Proposition~\ref{L1} and the just introduced notation put us in a position to formulate \eqref{hyper1}-\eqref{psibc} as a  single nonlocal evolution equation only involving the deflection $u$, see \eqref{CPP} below.

\medskip

We next prepare the proof of the energy identities \eqref{rc2} and \eqref{rc2b}, which will be given in Section~\ref{SectEnergy}. Owing to the dependence of the electrostatic energy $\mathcal{E}_e$ on the domain $\Omega(u)$, it turns out that the time regularity of the $u$-component of the solution to \eqref{hyper1}-\eqref{psibc} given by Theorem~\ref{Alin} is not sufficient to proceed directly. We shall thus use an approximation argument based on the following result, the proof being inspired by techniques from shape optimization \cite{HP05}:

\begin{prop}\label{pr.z1}
Let $T>0$, $\nu>0$, and $\bar{u}\in C([0,T],H_{D}^{2+2 \nu}(I))\cap C^1([0,T],H_{D}^{1+2\nu}(I))$ be such that $\bar{u}(t,x)>-1$ for $(t,x)\in [0,T]\times [-1,1]$. Then
\begin{equation}
\mathcal{E}_e(\bar{u}(t_2)) - \mathcal{E}_e(\bar{u}(t_1)) = - \int_{t_1}^{t_2} \int_{-1}^1 g(\bar{u}(s))\, \partial_t \bar{u}(s)\ \rd x \mathrm{d}s\ , \quad 0\le t_1 \le t_2 \le T\ . \label{z1}
\end{equation}
\end{prop}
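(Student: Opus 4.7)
My plan is to establish the pointwise identity
\[
\frac{d}{dt}\mathcal{E}_e(\bar u(t)) = -\int_{-1}^1 g(\bar u(t))\,\partial_t \bar u(t)\,\rd x
\]
and obtain \eqref{z1} by integration in $t$. The difficulty is twofold: first, $\mathcal{E}_e(v)$ depends on $v$ both through the moving domain $\Omega(v)$ and through the solution $\psi_v$ of the elliptic boundary value problem \eqref{psi}-\eqref{psibc}, with only the Lipschitz regularity of $v\mapsto\phi_v$ from Proposition~\ref{L1} available; second, the time regularity of $\bar u$ is just enough to control $\bar u\in C([0,T],H_{D}^{2+2\nu}(I))$ and $\partial_t\bar u\in C([0,T],H_{D}^{1+2\nu}(I))$. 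The key observation is that, after transferring $\mathcal{E}_e$ to the fixed rectangle $\Omega$ via $T_v$ from \eqref{Tu}, $\phi_v$ is the minimizer of an associated quadratic form, so that an envelope-type argument kills the variation induced by $\phi_v$ itself; the remaining explicit $v$-dependence will then simplify, via \eqref{23}-\eqref{24}, to the trace expression defining $g(v)$.

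To gain enough smoothness, I would first regularize $\bar u$ in time. Pick a standard mollifier $\rho_\delta$ in the $t$-variable (suitably extended near $0$ and $T$) and set $\bar u^\delta(t,\cdot):=(\rho_\delta\ast\bar u)(t,\cdot)$. Then $\bar u^\delta\in C^\infty([0,T],H_D^{1+2\nu}(I))\cap C([0,T],H_D^{2+2\nu}(I))$, preserves $\bar u^\delta(t,\pm 1)=0$, and by uniform continuity of $\bar u$ on the compact set $[0,T]\times[-1,1]$ still satisfies $\bar u^\delta(t,x)>-1$ uniformly for $\delta$ small enough. Moreover $\bar u^\delta\to\bar u$ in $C([0,T],H_D^{2+2\nu}(I))$ and $\partial_t \bar u^\delta\to \partial_t \bar u$ in $C([0,T],H_D^{1+2\nu}(I))$ as $\delta\to 0$. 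Combining the Lipschitz bound of Proposition~\ref{L1} with a standard difference-quotient argument on \eqref{230}-\eqref{240}, the map $t\mapsto\phi^\delta_t:=\phi_{\bar u^\delta(t)}$ is differentiable in $H^1(\Omega)$, and $\partial_t\phi^\delta_t$ vanishes on $\partial\Omega$ because $\phi^\delta_t=\eta$ on $\partial\Omega$ for every $t$.

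Via the change of variables $T_{\bar u^\delta(t)}$, a direct computation gives
\[
\mathcal{E}_e(\bar u^\delta(t)) = \int_\Omega \mathcal{F}\bigl(\bar u^\delta,\partial_x \bar u^\delta;\,\partial_x\phi^\delta,\partial_\eta\phi^\delta\bigr)\,\rd x\,\rd \eta,
\]
with
\[
\mathcal{F}(u,u';p,q) := \varepsilon^2(1+u)\,p^2 - 2\varepsilon^2\eta u' p q + \frac{1+\varepsilon^2\eta^2 (u')^2}{1+u}\,q^2,
\]
and a straightforward check shows that the Euler--Lagrange equation of $w\mapsto\tfrac12\int_\Omega \mathcal{F}(u,u';\partial_x w,\partial_\eta w)\,\rd x\,\rd\eta$ is precisely $\mathcal{L}_u w=0$ from \eqref{acdc}, so that $\phi_u$ is its unique minimizer among functions with trace $\eta$ on $\partial\Omega$. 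Differentiating $\mathcal{E}_e(\bar u^\delta(t))$ in $t$ and invoking this minimizing property together with the vanishing of $\partial_t\phi^\delta_t$ on $\partial\Omega$ makes the contribution $\mathcal{F}_p\,\partial_t\partial_x\phi^\delta+\mathcal{F}_q\,\partial_t\partial_\eta\phi^\delta$ disappear, so that
\[
\frac{d}{dt}\mathcal{E}_e(\bar u^\delta(t)) = \int_\Omega\bigl[\mathcal{F}_u\,\partial_t\bar u^\delta + \mathcal{F}_{u'}\,\partial_x\partial_t\bar u^\delta\bigr]\,\rd x\,\rd\eta.
\]

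I then integrate the second term by parts in $x$; the boundary contributions at $x=\pm 1$ vanish because $\bar u^\delta(t,\pm 1)=0$ forces $\partial_t\bar u^\delta(t,\pm 1)=0$. The remaining $\eta$-integrand is simplified by integrating by parts in $\eta$ and using both $\mathcal{L}_{\bar u^\delta}\phi^\delta=0$ and the Dirichlet data $\phi^\delta(\cdot,0)=0$, $\phi^\delta(\cdot,1)=1$: the bulk terms cancel, leaving only the trace at $\eta=1$, which is exactly
\[
-\frac{1+\varepsilon^2(\partial_x\bar u^\delta)^2}{(1+\bar u^\delta)^2}\,|\partial_\eta\phi^\delta(\cdot,1)|^2 \;=\; -g(\bar u^\delta).
\]
Integrating from $t_1$ to $t_2$ and letting $\delta\to 0$ --- using continuity of $\mathcal{E}_e$ on $\overline{S}_{\theta}(\kappa)$ (from Proposition~\ref{L1}) on the left, and the uniform Lipschitz property of $g$ combined with $\partial_t\bar u^\delta\to\partial_t\bar u$ in $C([0,T],L_2(I))$ on the right --- yields \eqref{z1}. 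I expect the main obstacle to be this explicit identification of the $\eta$-integral with $-g(\bar u^\delta)$: it is the shape derivative of a Dirichlet integral in transformed form, and the computation must account both for the moving geometry and for the $v$-dependent Dirichlet datum of $\psi_v$. An equivalent route would be to work directly on $\Omega(\bar u(t))$ and apply the classical Hadamard formula, using that $\psi_v(x,v(x))\equiv 1$ forces the shape derivative of $\psi_v$ to satisfy $\psi'_v=-\partial_z\psi_v\,\partial_t v$ on the moving top boundary, together with Green's identity and $\varepsilon^2\partial_x^2\psi_v+\partial_z^2\psi_v=0$.
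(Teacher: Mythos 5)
Your proposal is correct and its core is the same computation as the paper's proof: rewrite $\mathcal{E}_e$ on the fixed rectangle $\Omega$ via $T_{\bar u}$, differentiate in time, and cancel the contribution of $\partial_t\phi$ using the equation $\mathcal{L}_{\bar u}\phi=0$ with the test function $(1+\bar u)\,\partial_t\phi$ (your "envelope/minimizer" formulation is exactly this, since the Euler--Lagrange operator of your functional $\mathcal{F}$ is $(1+\bar u)\mathcal{L}_{\bar u}$); indeed your resulting formula $\int_\Omega\bigl[\mathcal{F}_u\,\partial_t\bar u+\mathcal{F}_{u'}\,\partial_x\partial_t\bar u\bigr]\rd(x,\eta)$ coincides with what the paper obtains after combining \eqref{z12} and \eqref{z13}, and the identification of this expression with $-\int_{-1}^1 g(\bar u)\,\partial_t\bar u\,\rd x$ — which you leave as the "main obstacle" and propose to do by integrations by parts in $(x,\eta)$ — is precisely the computation the paper carries out, only after transforming back to the physical variables $(x,z)$ and using Green's formula together with \eqref{psi}-\eqref{psibc}; the two routes are equivalent. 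Where you genuinely differ is in the treatment of the limited time regularity: the paper works at the exact regularity of the hypothesis ($\bar u\in C^1([0,T],H_D^{1+2\nu}(I))$ only) and devotes its hardest step to proving $\phi\in C^1([0,T],W_{2,D}^1(\Omega))$ by a difference-quotient argument on the divergence form of $\mathcal{L}_{\bar u}$ with $W_{2,D}^{-1}\to W_{2,D}^1$ elliptic estimates, whereas you mollify $\bar u$ in time first — which in fact gives $\bar u^\delta$ smooth in time with values in $H_D^{2+2\nu}(I)$, so the time differentiability of $\phi^\delta$ follows more cheaply (chain rule with the analytic solution map behind Proposition~\ref{L1}, or an easier difference-quotient argument) — and then you remove the mollification at the end using the Lipschitz bounds of Proposition~\ref{L1}. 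This buys a softer differentiability step at the price of an extra limit passage; it is essentially the Steklov-average argument the paper uses later (Section~\ref{SectEnergy}) when applying the proposition to the actual solution, imported into the proof of the proposition itself. Your boundary observations ($\partial_t\phi^\delta=0$ on $\partial\Omega$, $\partial_t\bar u^\delta(t,\pm1)=0$, preservation of $\bar u^\delta\ge-1+\kappa$) are all sound, so I see no gap, only the unexecuted but routine final identification.
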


\begin{proof}
We fix $q>2$ such that the embedding of $H^{2+2\nu}(I)$ in $W_q^2(I)$ and that of $H^{1+2\nu}(I)$ in $W_q^1(I)$ are continuous. To simplify notation, we let, for each $t\in [0,T]$, $\phi(t)=\phi_{\bar{u}(t)}\in H^2(\Omega)$ be the solution to \eqref{23}-\eqref{24} associated to $\bar{u}(t)$  as provided by Proposition~\ref{L1} and $\psi(t)=\psi_{\bar{u}(t)}\in H^2(\Omega(\bar{u}(t)))$ be the corresponding solution to \eqref{psi}-\eqref{psibc} also associated to $\bar{u}(t)$. Recall that the electrostatic energy $\mathcal{E}_e(\bar{u})$ is given by
$$
\mathcal{E}_e(\bar{u}(t)) = \int_{\Omega(\bar{u}(t))} \left( \varepsilon^2 |\partial_x \psi(t,x,z) |^2 + |\partial_z\psi(t,x,z)|^2 \right)\ \mathrm{d}(x,z)\ ,
$$
where $\Omega(\bar{u}(t)) = \{ (x,z) \in I\times\mathbb{R}\ : \ -1 < z < \bar{u}(t,x) \}$, and set
$$
U(t,x) := \frac{\partial_x \bar{u}(t,x)}{1+\bar{u}(t,x)}  \;\;\text{ and }\;\; \Phi(t,x,\eta) := \phi(t,x,\eta)-\eta\ , \qquad (t,x,\eta)\in [0,T]\times\Omega\ .
$$ 

\medskip

\noindent\textbf{Step~1: Alternative formula for $\mathcal{E}_e(\bar{u})$}.  Let $t\in [0,T]$. Since $$\psi(t,x,z) = \phi\left(t,x,\frac{1+z}{1+\bar{u}(t,x)}\right)\ ,\quad (x,z)\in \Omega(\bar{u}(t))\ ,$$ a simple change of variables reveals that 
\begin{align}
\mathcal{E}_e(\bar{u}(t)) = & \varepsilon^2 \int_\Omega \left| \partial_x \phi(t,x,\eta) - \eta U(t,x) \partial_\eta \phi(t,x,\eta) \right|^2 (1+\bar{u}(t,x))\ \mathrm{d}(x,\eta) \nonumber \\
& + \int_\Omega \frac{|\partial_\eta \phi(t,x,\eta)|^2}{1+\bar{u}(t,x)}\ \mathrm{d}(x,\eta)\ . \label{z4}
\end{align} 

\medskip

\noindent\textbf{Step~2: Time differentiability of $\Phi$ and $\phi$}. 
Recall that, for $t\in [0,T]$, $\Phi(t)$ solves
\begin{equation}
\mathcal{L}_{\bar{u}(t)} \Phi(t) = f(t) \;\;\text{ in }\;\;\Omega\ , \qquad \Phi(t) = 0 \;\;\text{ on }\;\;\partial\Omega\ , \label{z2} \\
\end{equation}
with
$$
f(t,x,\eta) := \varepsilon^2 \eta \left[ \partial_x U(t,x) - |U(t,x)|^2 \right]\ , \qquad (t,x,\eta) \in [0,T]\times \Omega\ .
$$
For further use, we write the operator $\mathcal{L}_{\bar{u}(t)}$ in divergence form,
\begin{align}
\mathcal{L}_{\bar{u}(t)} w = & \partial_x \left( a_{11}(t) \partial_x w + a_{12}(t) \partial_\eta w \right) + \partial_\eta \left( a_{21}(t) \partial_x w + a_{22}(t) \partial_\eta w \right) \nonumber \\
& + b_1(t) \partial_x w + b_2(t) \partial_\eta w\ , \label{Ldiv}
\end{align}
and the function $f(t)$ in the form
$$
f(t) = \partial_x f_1(t) + f_2(t)\ ,
$$
where
\begin{align*}
& a_{11}(t,x,\eta) := \varepsilon^2\ ,  && a_{22}(t,x,\eta) := \frac{1}{(1+\bar{u}(t,x))^2} + \varepsilon^2 \eta^2 |U(t,x)|^2\ , \\
& a_{12}(t,x,\eta) = a_{21}(t,x,\eta) := - \varepsilon^2 \eta U(t,x) \ , \\
& b_1(t,x,\eta) := \varepsilon^2 U(t,x)\ , &&  b_2(t,x,\eta) := - \varepsilon^2 \eta |U(t,x)|^2\ , \\
& f_1(t,x,\eta) := \varepsilon^2 \eta U(t,x)\ , &&  f_2(t,x,\eta) := - \varepsilon^2 \eta |U(t,x)|^2
\end{align*}
for $(t,x,\eta) \in [0,T]\times\Omega$.
Now, for $t\in [0,T]$, $s\in [-t,T-t]$, and $(x,\eta)\in\Omega$, we define
$$
\delta_s \Phi(t,x,\eta) := \Phi(t+s,x,\eta) - \Phi(t,x,\eta) = \phi(t+s,x,\eta) - \phi(t,x,\eta)\ .
$$
Let $t\in [0,T]$. We readily deduce from \eqref{z2} (applied with $t$ and $t+s$) that
\begin{equation}
\mathcal{L}_{\bar{u}(t)} \delta_s\Phi(t) = R_s(t)  \;\;\text{ in }\;\;\Omega\ , \qquad \delta_s\Phi(t) = 0 \;\;\text{ on }\;\;\partial\Omega\ , \label{z3} 
\end{equation}
where
\begin{align*}
R_s(t) := & -\partial_x \left[ \delta_s a_{12}(t) \partial_\eta \Phi(t+s) \right] - \partial_\eta \left[ \delta_s a_{21}(t) \partial_x \Phi(t+s) \right] \\
& - \partial_\eta \left[ \delta_s a_{22}(t) \partial_\eta\Phi(t+s) \right] - \delta_s b_1(t) \partial_x \Phi(t+s) - \delta_s b_2(t) \partial_\eta \Phi(t+s) \\
& + \partial_x \delta_s  f_1(t) + \delta_s f_2(t)\ .
\end{align*}
Next, for $w\in W_{2,D}^1(\Omega):=\{w\in W_{2}^1(\Omega)\,:\, w=0\text{ on } \partial\Omega\}$, we use Green's formula to obtain
\begin{align*}
\int_\Omega R_s(t) w\ \mathrm{d}(x,\eta) = & \int_\Omega \delta_s a_{21}(t) \left( \partial_\eta \Phi(t+s) \partial_x w + \partial_x \Phi(t+s) \partial_\eta w \right)\ \mathrm{d}(x,\eta) \\
& +\int_\Omega \delta_s a_{22}(t) \partial_\eta \Phi(t+s) \partial_\eta w\ \mathrm{d}(x,\eta) - \int_\Omega \delta_s f_{1}(t) \partial_x w\ \mathrm{d}(x,\eta)\\
& - \int_\Omega \left( \delta_s b_1(t) \partial_x \Phi(t+s) + \delta_s b_2(t) \partial_\eta \Phi(t+s)b-\delta_s f_2(t) \right) w\ \mathrm{d}(x,\eta)\ .
\end{align*}
We now aim at investigating the behavior of $R_s(t)$ as $s\to 0$. To this end, we note that the regularity of $\Phi$ and the continuous embedding of $W_{2,D}^1(\Omega)$ in $L_{2q/(q-2)}(\Omega)$ guarantee that 
\begin{align}
& s\mapsto \partial_\eta \Phi(t+s) \partial_x w\ ,\ \partial_x \Phi(t+s) \partial_\eta w\ ,\ \partial_\eta \Phi(t+s) \partial_\eta w \in C([-t,T-t],L_{q/(q-1)}(\Omega)) \ ,\label{z6} \\
& s\mapsto \partial_x \Phi(t+s) w\ , \partial_\eta \Phi(t+s) w\ \in C([ -t,T-t],L_{q/(q-1)}(\Omega))\ , \label{z7}
\end{align}
while the regularity of $\bar{u}$ and the continuous embedding of $W_q^2(I)$ in $W_\infty^1(I)$ imply that 
\begin{equation}
a_{21}, a_{22}, b_1, b_2, f_1, \text{ and } f_2 \text{ belong to } C^1([0,T],L_q(I))\ . \label{z5}
\end{equation}
We may therefore define $R_0\in C([0,T],W_{2,D}^{-1}(\Omega))$ by
\begin{align*}
\langle R_0(t) , w \rangle :=  & \int_\Omega \partial_t a_{21}(t) \left( \partial_\eta \Phi(t) \partial_x w + \partial_x \Phi(t) \partial_\eta w \right)\ \mathrm{d}(x,\eta) \\
& + \int_\Omega \partial_t a_{22}(t) \partial_\eta \Phi(t) \partial_\eta w\ \mathrm{d}(x,\eta) - \int_\Omega \partial_t f_{1}(t) \partial_x w\ \mathrm{d}(x,\eta)\\
& - \int_\Omega \left( \partial_t b_1(t) \partial_x  \Phi(t) + \partial_t b_2(t) \partial_\eta \Phi(t) - \partial_t f_2(t) \right) w\ \mathrm{d}(x,\eta)
\end{align*}
for $t\in [0,T]$ and deduce from the regularity properties \eqref{z6}, \eqref{z7}, and \eqref{z5} that
\begin{equation}
\lim_{s\to 0} \sup_{t\in [0,T]}\left\{ \left\| \frac{R_s(t)}{s} - R_0(t) \right\|_{W_{2,D}^{-1}(\Omega)} \right\} = 0\ . \label{z8}
\end{equation}
Now, for each $t\in [0,T]$, it follows from \cite[Lemma~2.2]{ELW2} that there is a unique solution $I_0(t)\in W_{2,D}^1(\Omega)$ to
\begin{equation}
\mathcal{L}_{\bar{u}(t)} I_0(t) = R_0(t) \;\;\text{ in }\;\;\Omega\ , \qquad I_0(t) = 0 \;\;\text{ on }\;\;\partial\Omega\ . \label{z9} 
\end{equation}
Furthermore, we may argue as in the proof of \cite[Lemma~2.6]{ELW2} and use the time continuity of $\bar{u}$ in $W_q^2(I)$ to show that
\begin{equation}
I_0 \in C([0,T],W_{2,D}^1(\Omega))\ . \label{z10}
\end{equation}
We then infer from \eqref{z3} and \eqref{z9} that, for $s\in (-t,T-t)$,
\begin{equation*}
\mathcal{L}_{\bar{u}(t)} \left( \frac{\delta_s \Phi(t)}{s} - I_0(t) \right) = \frac{R_s(t)}{s} - R_0(t) \;\;\text{ in }\;\;\Omega\ , \qquad \frac{\delta_s \Phi(t)}{s} - I_0(t) = 0 \;\;\text{ on }\;\;\partial\Omega\ ,
\end{equation*}
and, using again \cite[Lemma~2.2]{ELW2}, we obtain
$$
\left\| \frac{\delta_s\Phi(t)}{s} - I_0(t) \right\|_{W_{2,D}^{1}(\Omega)} \le C \left\| \frac{R_s(t)}{s} - R_0(t) \right\|_{W_{2,D}^{-1}(\Omega)} \ .
$$
Recalling \eqref{z8}, we conclude that $\Phi$ is differentiable with respect to time in $W_{2,D}^1(\Omega)$ with derivative $I_0$. The latter being continuous by \eqref{z10}, we have thus established that 
\begin{equation}
\Phi \in C^1([0,T],W_{2,D}^1(\Omega)) \;\;\text{ with }\;\; \partial_t \Phi = I_0\ . \label{z11}
\end{equation}

\medskip

\noindent\textbf{Step~3: Time differentiability of  $\mathcal{E}_e(\bar{u})$}. Since $\phi(t,x,\eta) = \Phi(t,x,\eta)+\eta$ for $(t,x,\eta)\in [0,T]\times\Omega$, it follows from \eqref{z11} that $\phi \in C^1([0,T],W_{2,D}^1(\Omega))$ with $\partial_t \phi = \partial_t \Phi = I_0$. Thanks to this property, we readily deduce from \eqref{z4} that $\mathcal{E}_e(\bar{u}) \in C^1([0,T])$ with
\begin{align}
\frac{\rd }{\rd t}\mathcal{E}_e(\bar{u}) = & 2 \varepsilon^2 \int_\Omega \left( \partial_x \phi - \eta U \partial_\eta \phi \right) \left( \partial_x \partial_t \phi - \eta U \partial_\eta \partial_t \phi - \eta \partial_t U \partial_\eta \phi \right) (1+\bar{u})\ \rd (x,\eta) \nonumber \\
& + \varepsilon^2 \int_\Omega \left( \partial_x \phi - \eta U \partial_\eta \phi \right)^2 \partial_t \bar{u}\ \rd(x,\eta) - \int_\Omega \left( \partial_\eta \phi \right)^2 \frac{\partial_t \bar{u}}{(1+\bar{u})^2}\ \rd (x,\eta) \nonumber \\
& + 2 \int_\Omega \frac{\partial_\eta \phi \partial_\eta \partial_t \phi}{1+\bar{u}}\ \rd (x,\eta)\ . \label{z12}
\end{align}
Since $\partial_t \phi= \partial_t \Phi =0$ on $(0,T)\times\partial\Omega$, it follows from \eqref{230} and Green's formula that, for $t\in [0,T]$, 
\begin{align}
0 = & \int_\Omega (1+\bar{u}) \partial_t \phi \ \mathcal{L}_{\bar{u}} \phi\ \rd (x,\eta) \nonumber \\
= & - \varepsilon^2 \int_\Omega \left( \partial_x \bar{u} \partial_t \phi + (1+\bar{u}) \partial_x \partial_t \phi \right) \left( \partial_x \phi - \eta U \partial_\eta \phi \right)\ \rd (x,\eta) \nonumber \\
& + \varepsilon^2 \int_\Omega \eta (1+\bar{u}) U \partial_\eta \partial_t \phi \left( \partial_x \phi - \eta U \partial_\eta \phi \right) \rd (x,\eta) - \int_\Omega \partial_\eta \partial_t \phi 
\frac{\partial_\eta \phi}{1+\bar{u}}\ \rd (x,\eta) \nonumber \\
& + \varepsilon^2 \int_\Omega \partial_x \bar{u} \partial_t \phi \left( \partial_x \phi - \eta U \partial_\eta \phi \right) \rd (x,\eta) \nonumber \\
= & - \varepsilon^2 \int_\Omega (1+\bar{u}) \left( \partial_x \phi - \eta U \partial_\eta \phi \right) \left( \partial_x \partial_t \phi - \eta U \partial_\eta \partial_t \phi \right)\ \rd (x,\eta) \nonumber \\
& - \int_\Omega \partial_\eta \partial_t \phi \frac{\partial_\eta \phi}{1+\bar{u}}\ \rd (x,\eta) \ . \label{z13}
\end{align}
Combining \eqref{z12} and \eqref{z13}, we find
\begin{align*}
\frac{\rd }{\rd t}\mathcal{E}_e(\bar{u}) = & - 2 \varepsilon^2 \int_\Omega \left( \partial_x \phi - \eta U \partial_\eta \phi \right) \eta \partial_t U \partial_\eta \phi (1+\bar{u}) \ \rd (x,\eta) \\
& + \varepsilon^2 \int_\Omega \left( \partial_x \phi - \eta U \partial_\eta \phi \right)^2 \partial_t \bar{u}\ \rd(x,\eta) - \int_\Omega \frac{\left( \partial_\eta \phi \right)^2}{(1+\bar{u})^2} \partial_t \bar{u}\ \rd (x,\eta)\ .
\end{align*}
Coming back to the original variables $(x,z)$ and function $\psi$ and using the identity
$$
\partial_t U = \partial_t \partial_x \left( \ln{(1+\bar{u})} \right) = \partial_x \left( \frac{\partial_t \bar{u}}{1+\bar{u}} \right)\ ,
$$
we obtain with the help of Green's formula, the property $\partial_t \bar{u} \in W_{q,D}^1(I)$, and \eqref{psi}-\eqref{psibc} (with $\bar{u}$ instead of $u$)
\begin{align*}
\frac{\rd }{\rd t}\mathcal{E}_e(\bar{u}) = & - 2 \varepsilon^2 \int_{\Omega(\bar{u})} (1+z) \partial_x \psi \partial_z \psi \partial_t U\ \rd (x,z) + \int_{\Omega(\bar{u})} \left( \varepsilon^2 |\partial_x\psi|^2 - |\partial_z\psi|^2 \right) \frac{\partial_t \bar{u}}{1+\bar{u}}\ \rd (x,z) \\
=\ & 2 \varepsilon^2 \int_{\Omega(\bar{u})} (1+z) \left( \partial_x^2 \psi \partial_z \psi + \partial_x \psi \partial_x \partial_z \psi\right) \frac{\partial_t \bar{u}}{1+\bar{u}} \ \rd (x,z) \\
& + 2 \varepsilon^2 \int_{-1}^1 \partial_x \psi(\cdot,\bar{u}) \partial_z \psi(\cdot,\bar{u}) \partial_x \bar{u} \partial_t \bar{u}\ \rd x + \int_{\Omega(\bar{u})} \left( \varepsilon^2 |\partial_x\psi|^2 - |\partial_z\psi|^2 \right) \frac{\partial_t \bar{u}}{1+\bar{u}}\ \rd (x,z) \\
=\ & 2 \int_{\Omega(\bar{u})} (1+z) \left( - \partial_z^2 \psi \partial_z \psi + \varepsilon^2 \partial_x \psi \partial_x \partial_z \psi\right) \frac{\partial_t \bar{u}}{1+\bar{u}} \ \rd (x,z) \\
& - 2 \varepsilon^2 \int_{-1}^1 |\partial_x \bar{u}|^2 |\partial_z \psi(\cdot,\bar{u})|^2 \partial_t \bar{u}\ \rd x + \int_{\Omega(\bar{u})} \left( \varepsilon^2 |\partial_x\psi|^2 - |\partial_z\psi|^2 \right) \frac{\partial_t \bar{u}}{1+\bar{u}}\ \rd (x,z) \\
= & - \int_{-1}^1 |\partial_z \psi(\cdot,\bar{u})|^2 \partial_t \bar{u}\ \rd x + \varepsilon^2 \int_{-1}^1 |\partial_x \psi(\cdot,\bar{u})|^2\, \partial_t \bar{u} \ \rd x \\
& - 2 \varepsilon^2 \int_{-1}^1 |\partial_x \bar{u}|^2 |\partial_z \psi(\cdot,\bar{u})|^2 \partial_t \bar{u}\ \rd x \\
= & - \int_{-1}^1 g(\bar{u}) \partial_t \bar{u} \ \rd x\ .
\end{align*}
Integration with respect to time completes the proof.
\end{proof}

\section{Well-Posedness}\label{Sec3a} 

According to Proposition~\ref{L1} we may write \eqref{hyper1}-\eqref{psibc} as a semilinear evolution equation 
\bqn\label{CPP}
\gamma^2\dfrac{\rd^2}{\rd t^2} u+\dfrac{\rd}{\rd t} u+ A u= -\lambda g(u)\ ,\quad t>0\ ,\qquad u(0)=u^0\ ,\quad\gamma^2\dfrac{\rd}{\rd t}  u(0)= \gamma^2 u^1\ ,
\eqn
only involving the deflection $u$, where the operator $A\in\mathcal{L}\big(H_D^4(I),L_2(I))\big)$ is given by
$$
Av:= \beta\partial_x^4 v-\tau\partial_x^2 v\ ,\quad v\in H_D^4(I)\ .
$$
Once \eqref{CPP} is solved, the solution $\psi(t)=\psi_{u(t)}$ to \eqref{psi}-\eqref{psibc} is obtained from Proposition~\ref{L1} and the subsequent discussion. Note that the operator $-A$ is the generator of an analytic semigroup on $L_2(I)$ with an exponential decay, see \cite{AmannTeubner} or \cite[Theorem~7.2.7]{Pazy}.

\subsection{Parabolic Case: $\gamma=0$}

In that case, the equation \eqref{CPP} reduces to the parabolic semilinear Cauchy problem
\bqn\label{CP}
\dfrac{\rd}{\rd t} u+ A u= -\lambda g(u)\ ,\quad t>0\ ,\qquad u(0)=u^0\ .
\eqn
Since $-A$ generates an exponentially decaying analytic semigroup on $L_2(I)$, the global Lipschitz property of the function $g$ stated in Proposition~\ref{L1} ensures that we may prove exactly as in \cite[Theorem~1]{ELW1} the following existence result, for which we thus omit details:

\begin{prop}[{\bf Well-Posedness}]\label{Aglobal}
Let $\gamma=0$. Given  $4\xi\in (2,4]$, consider an initial value \mbox{$u^0\in  H_\B^{4\xi}(I)$} such that $u^0(x)>-1$ for $x\in I$. Then, the following hold:
\begin{itemize}

\item[(i)]  For each $\lambda>0$, there is a unique solution $(u,\psi)$ to  \eqref{hyper1}-\eqref{psibc} on the maximal interval of existence $[0,T_m)$ in the sense that
$$
u\in C^1\big((0,T_m),L_2(I)\big)\cap C\big((0,T_m), H_\B^4(I)\big) \cap C\big([0,T_m), H_\B^{4\xi}(I)\big)
$$
satisfies \eqref{hyper1}-\eqref{hyper3} together with
$$
u(t,x)>-1\ ,\quad (t,x)\in [0,T_m)\times I\ , 
$$ 
and $\psi(t)\in H^2\big(\Omega(u(t))\big)$  solves \eqref{psi}-\eqref{psibc} in $\Omega(u(t))$ for each $t\in [0,T_m)$. In addition, if $\xi=1$, then $u\in C^1\big([0,T_m),L_2(I)\big)$.

\item[(ii)] If, for each $T>0$, there is $\kappa(T)\in (0,1)$ such that 
$$
\|u(t)\|_{H^{4\xi}(I)}\le \frac{1}{\kappa(T)}\ ,\qquad u(t)\ge -1+\kappa(T)\ \text{ in } I
$$ 
for $t\in [0,T_m)\cap [0,T]$, then the solution exists globally in time, that is, $T_m=\infty$.

\item[(ii)] Given $\kappa\in (0,1)$, there is $\lambda_*(\kappa)>0$ such that the solution exists globally in time provided that $\lambda\in (0,\lambda_*(\kappa))$ and $u^0\ge -1+\kappa$ on $I$ with $\|u^0\|_{H_\B^{4\xi}(I)}\le 1/\kappa$. Moreover,  $u\in L_\infty(0,\infty,H_\B^{4\xi}(I))$ in this case with
$$
\inf_{(t,x)\in [0,\infty)\times I} u(t,x)>-1\ .
$$ 
\end{itemize}
\end{prop}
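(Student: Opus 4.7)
The plan is to recast \eqref{CP} in mild form via the analytic semigroup $\{e^{-tA}\}_{t\ge 0}$ generated by $-A$ on $L_2(I)$,
\bqnn
u(t) = e^{-tA}u^0 - \lambda \int_0^t e^{-(t-s)A}\, g(u(s))\, \rd s\ ,
\eqnn
and apply Banach's fixed point theorem along the lines of \cite[Theorem~1]{ELW1}. Fix an auxiliary exponent $\sigma\in [0,1/8)$, so that $4\sigma<1/2$ and Proposition~\ref{L1} applies: the map $g$ is uniformly Lipschitz and bounded from each $\overline{S}_\xi(\kappa)$ into $H_\B^{4\sigma}(I)$, with constants depending only on $\kappa$. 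Combined with the smoothing and exponential decay of the analytic semigroup, for which there exist $C,\omega>0$ with
\bqnn
\|e^{-tA}\|_{\mathcal{L}(H_\B^{4\sigma}(I), H_\B^{4\xi}(I))} \le C\, t^{-(\xi-\sigma)} e^{-\omega t}\ ,\qquad t>0\ ,
\eqnn
and noting that $\xi-\sigma<1$ so that the singularity at $s=t$ is integrable, everything is in place for a standard contraction scheme.

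For part~(i), given $u^0$ with $u^0\ge -1+2\kappa_0$ on $I$ and $\|u^0\|_{H_\B^{4\xi}(I)}\le R$, I would work on the closed set
\bqnn
\mathcal{B}_T := \bigl\{ u\in C([0,T], H_\B^{4\xi}(I))\,:\, u(0)=u^0\,,\ \|u(t)\|_{H_\B^{4\xi}(I)}\le 2R\,,\ u(t,\cdot)\ge -1+\kappa_0 \bigr\}\ ,
\eqnn
and show that the right-hand side of the Duhamel formula is a self-map and contraction on $\mathcal{B}_T$ in the natural norm for $T$ small (depending on $\lambda$, $\kappa_0$, $R$). The pointwise lower bound is propagated by continuity in time together with the Sobolev embedding $H_\B^{4\xi}(I)\hookrightarrow C(\bar I)$ and the strict inequality satisfied at $t=0$. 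Banach's theorem produces the unique solution on $[0,T]$, and a standard continuation argument yields a maximal interval $[0,T_m)$. The additional interior regularity $u\in C((0,T_m),H_\B^4(I))\cap C^1((0,T_m),L_2(I))$ follows from parabolic smoothing of the analytic semigroup; for $\xi=1$, the datum is already in $H_\B^4(I)$, so the continuity of $\partial_t u$ extends to $t=0$. The potential $\psi(t)=\psi_{u(t)}$ is then recovered via Proposition~\ref{L1}.

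For the extension criterion (ii), the assumed bounds on $[0,T_m)\cap[0,T]$ ensure that $u(t_*)\in \overline{S}_\xi(\kappa(T))$ for every $t_*<T_m$; rerunning the contraction argument with initial datum $u(t_*)$ then yields a local solution on an interval whose length depends only on $\kappa(T)$, so choosing $t_*$ close enough to $T_m$ contradicts maximality unless $T_m>T$. For part~(iii), estimating the Duhamel formula in $H_\B^{4\xi}(I)$ while $u(t)\in\overline{S}_\xi(\kappa)$, using the exponential decay of the semigroup and the uniform bound on $\|g\|_{H_\B^{4\sigma}(I)}$ over $\overline{S}_\xi(\kappa)$ from Proposition~\ref{L1}, gives
\bqnn
\|u(t)\|_{H_\B^{4\xi}(I)} \le C\, e^{-\omega t}\, \|u^0\|_{H_\B^{4\xi}(I)} + \lambda\, C_*(\kappa)\ ,
\eqnn
and a continuity-bootstrap argument shows that for $\lambda$ smaller than some $\lambda_*(\kappa)>0$ the right-hand side stays strictly below $1/\kappa$ while the pointwise lower bound $u(t)\ge -1+\kappa$ is preserved via the embedding. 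Part~(ii) then forces $T_m=\infty$ and yields the stated uniform-in-time bounds.

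\medskip

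\noindent\emph{Main obstacle.} The abstract semilinear scheme is essentially routine once Proposition~\ref{L1} is available; the delicate technical point — both for stability of $\mathcal{B}_T$ under the solution map and for the uniform-in-time estimate in~(iii) — is maintaining the quantitative separation $u(t)>-1$, which is precisely what makes $g$ well-defined and Lipschitz. This is controlled by propagating the Sobolev bound through the Duhamel identity and turning it into a pointwise lower bound via $H_\B^{4\xi}(I)\hookrightarrow C(\bar I)$.
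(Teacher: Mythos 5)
Your overall scheme -- mild formulation via the exponentially decaying analytic semigroup, contraction on a ball in $C([0,T],H_\B^{4\xi}(I))$ intersected with the constraint $u\ge -1+\kappa_0$, continuation, and a Duhamel estimate for small $\lambda$ -- is exactly the route the paper takes, which simply defers to the proof of \cite[Theorem~1]{ELW1} once Proposition~\ref{L1} and the decay of $e^{-tA}$ are in hand. Parts (i) and (ii) of your sketch are fine modulo routine details (one small slip: with $\xi=1$ and $\sigma=0$ you have $\xi-\sigma=1$ and the singularity $t^{-(\xi-\sigma)}$ is not integrable, so you must take $\sigma>0$; also the interior regularity $u\in C((0,T_m),H_\B^4(I))\cap C^1((0,T_m),L_2(I))$ needs H\"older-in-time continuity of $g(u)$, not just smoothing, but this is standard).

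The genuine gap is in part (iii), and it is precisely at the point you yourself flag as the main obstacle. From the Duhamel estimate you get $\|u(t)\|_{H_\B^{4\xi}(I)}\le C e^{-\omega t}\|u^0\|_{H_\B^{4\xi}(I)}+\lambda C_*(\kappa)$ with $C\ge 1$, and the embedding $H_\B^{4\xi}(I)\hookrightarrow C(\bar I)$ then only yields a \emph{two-sided} sup bound of size roughly $C/\kappa$, which can be much larger than $1$; this cannot "preserve the pointwise lower bound $u(t)\ge -1+\kappa$" as you assert. Concretely: for small times $u(t)$ stays close to $u^0$ in $L_\infty$, and for large times the exponential decay forces $\|u(t)\|_\infty\le 1/2$, but in the intermediate time regime your estimates give no lower bound at all, and the nonlinearity $-\lambda g(u)\le 0$ only pushes $u$ downward, so it cannot help. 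What is really needed is a lower bound on the linear evolution $e^{-tA}u^0$ uniformly over the admissible data, i.e.\ a positivity-type property of the semigroup. In the second-order problem of \cite{ELW1} this is supplied by the maximum principle for the Dirichlet heat semigroup; for the clamped fourth-order operator $A=\beta\partial_x^4-\tau\partial_x^2$ the semigroup is \emph{not} positivity preserving (the form domain $H_D^2(I)$ is not stable under $u\mapsto |u|$), so this step does not transfer automatically and your bootstrap does not close. To repair it you must either establish a quantitative lower bound for $e^{-tA}u^0$ on the class $\{u^0\ge -1+\kappa,\ \|u^0\|_{H_\B^{4\xi}}\le 1/\kappa\}$, or modify the hypotheses/scheme in the spirit of the paper's hyperbolic argument (Proposition~\ref{ThyperIntroduction}~(iii)), where smallness of the datum in the graph norm is used to keep $\sup_t\|u(t)-u^0\|_{\infty}$ small for all times.
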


The statements~(i) and~(iii) of Theorem~\ref{Alin} readily follow from Proposition~\ref{Aglobal} with \mbox{$\xi=1$}. Notice that Proposition~\ref{Aglobal} is somewhat an extension of Theorem~\ref{Alin} as it requires weaker regularity on the initial condition. We shall prove the refined global existence criterion stated in part~(ii) of Theorem~\ref{Alin} in the next section, the starting point being Proposition~\ref{Aglobal}~(ii). 

\subsection{Hyperbolic Case: $\gamma>0$}

If $\gamma$ does not vanish, the equation \eqref{CPP} is hyperbolic, and we can no longer take advantage of the regularizing properties of the semigroup associated with the operator $-A$. In this case, we have to proceed in a different way outlined below. We shall prove the following refinement of Theorem~\ref{ThyperIntro}:

\begin{prop}\label{ThyperIntroduction}
Let $\gamma>0$ and $2\alpha\in (0,1/2)$. Consider an initial condition $(u^0,u^1)\in H_D^{4+2\alpha}(I)\times H_D^{2+2\alpha}(I)$  such that $u^0> -1$ in~$I$. Then the following hold:

\begin{itemize}
\item[(i)] For each $\lambda>0$, there is a unique solution $(u,\psi)$ to \eqref{hyper1}-\eqref{psibc} on a the maximal interval of existence $[0,T_m)$ in the sense that
$$
u\in C([0,T_m),H_D^{2+2\alpha}(I))\cap C^1([0,T_m),H_D^{2\alpha}(I))\ ,\quad \partial_t^k u\in L_1(0,T; H_D^{4+2\alpha-2k}(I))
$$
for $k=0,1,2$ and $T\in (0,T_m)$, and satisfies \eqref{hyper1}-\eqref{hyper3} together with
$$
u(t,x)>-1\ ,\quad (t,x)\in [0,T_m)\times I\ , 
$$ 
while $\psi(t)\in H^2\big(\Omega(u(t))\big)$  solves \eqref{psi}-\eqref{psibc} in $\Omega(u(t))$ for each $t\in [0,T_m)$. 

\item[(ii)] If $T_m<\infty$, then 
$$
\liminf_{t\rightarrow T_m}\ \min_{x\in[-1,1]}\{u(t,x)\} =-1
$$
or
$$
\limsup_{t\rightarrow T_m} \left(\|u(t)\|_{H_D^{2+2\alpha}(I)}+\|\partial_t u(t)\|_{H_D^{2\alpha}(I)}\right)=\infty\ .
$$

\item[(iii)] Given $\kappa\in (0,1)$, there are $\lambda(\kappa)>0$ and $N(\kappa)>0$ such that $T_m=\infty$ provided that $\lambda\le \lambda(\kappa)$, $u^0\ge -1+\kappa$ on $I$, and $$ \|(u^0,u^1)\|_{H_D^{4+2\alpha}(I)\times H_D^{2+2\alpha}(I)}\le N(\kappa)\ .$$
In this case,  $u\in L_\infty(0,\infty;H_D^{2+2\alpha}(I))$ with
$$
\inf_{(t,x)\in [0,\infty)\times I} u(t,x)>-1\ .
$$
\end{itemize}
\end{prop}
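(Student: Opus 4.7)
The plan is to recast \eqref{CPP} as a first-order Cauchy problem in a product Hilbert space and to run the standard semigroup + fixed point machinery, using the Lipschitz properties of $g$ supplied by Proposition~\ref{L1}. Setting $U := (u,\partial_t u)^{\top}$ and
\begin{equation*}
\mathcal{A} := \begin{pmatrix} 0 & -I \\ \gamma^{-2} A & \gamma^{-2} I \end{pmatrix}\ ,\qquad G(U) := \begin{pmatrix} 0 \\ -\lambda \gamma^{-2}\, g(u) \end{pmatrix}\ ,
\end{equation*}
the equation \eqref{CPP} becomes $\partial_t U + \mathcal{A} U = G(U)$ with $U(0) = (u^0,u^1)$. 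The key preliminary step is to show that $-\mathcal{A}$ generates a $C_0$-semigroup $(S(t))_{t\ge 0}$ on the phase space $X_\alpha := H_D^{2+2\alpha}(I) \times H_D^{2\alpha}(I)$ with domain $D(\mathcal{A}) = H_D^{4+2\alpha}(I) \times H_D^{2+2\alpha}(I)$ and with \emph{exponential decay} $\|S(t)\|_{\mathcal{L}(X_\alpha)} \le M e^{-\omega t}$ for some $M,\omega>0$. This is anticipated from the positive self-adjointness and compact resolvent of $A$ on $L_2(I)$ (see~\cite{AmannTeubner}), combined with the zeroth-order damping $\gamma^{-2}I$ in the second row, which makes the energy of the associated linear equation strictly dissipative.

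For~\textbf{(i)}, since $2\alpha<1/2$, I would apply Proposition~\ref{L1} with $4\theta = 2+2\alpha>2$ and $4\sigma = 2\alpha<1/2$, obtaining a Lipschitz and bounded map $g : S_\theta(\kappa)\to H_D^{2\alpha}(I)$. The nonlinearity $G$ is therefore locally Lipschitz on the open set $\mathcal{O}_\kappa := \{(u,v)\in X_\alpha : u\in S_\theta(\kappa)\}$. A Banach fixed point applied to the mild formulation
\begin{equation*}
U(t) = S(t)\,U(0) + \int_0^t S(t-s)\, G(U(s))\, \rd s
\end{equation*}
in $C([0,T],\mathcal{O}_\kappa)$ for $T>0$ small yields a unique local mild solution. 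Because $(u^0,u^1)\in D(\mathcal{A})$ and $G$ is smooth along the mild solution, classical semigroup theory \cite{Pazy} upgrades this to a strong solution $U\in C([0,T_m),D(\mathcal{A}))\cap C^1([0,T_m),X_\alpha)$, from which all regularity assertions in (i) follow: in particular $\partial_t^2 u \in C([0,T_m),H_D^{2\alpha}(I))$ comes from $\partial_t U \in C([0,T_m),X_\alpha)$, and the stated $L_1$ integrability is then immediate. The electrostatic potential $\psi(t)$ is recovered from Proposition~\ref{L1} via the diffeomorphism $T_{u(t)}^{-1}$.

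For~\textbf{(ii)}, the length of the existence interval produced by the fixed point depends only on $\|U(0)\|_{X_\alpha}$ and on the Proposition~\ref{L1} parameter $\kappa$. A standard continuation argument then shows that if $T_m<\infty$, the solution must leave every set $\mathcal{O}_\kappa$ as $t\to T_m$, yielding the claimed touchdown/blow-up dichotomy. For~\textbf{(iii)}, I would exploit the exponential decay of $S(t)$ together with the uniform bound $\|g(u)\|_{H_D^{2\alpha}}\le C(\kappa)$ on $S_\theta(\kappa)$ (again from Proposition~\ref{L1}) to control
\begin{equation*}
\|U(t)\|_{X_\alpha}\le M e^{-\omega t}\,\|U(0)\|_{X_\alpha} + \lambda \gamma^{-2} M C(\kappa)\, \omega^{-1}\ .
\end{equation*}
Choosing $N(\kappa)$ and $\lambda(\kappa)$ sufficiently small that the right-hand side stays under a threshold tied to $\kappa$, and propagating the lower bound $u(t)\ge -1+\kappa/2$ from $u^0\ge -1+\kappa$ via the embedding $H^{2+2\alpha}(I)\hookrightarrow C(\bar I)$, closes a standard continuity bootstrap. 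Part~(ii) then excludes finite $T_m$ and delivers the $L_\infty(0,\infty;H_D^{2+2\alpha})$ bound as well as the strict separation of $u$ from $-1$.

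The \emph{main obstacle} is the very first step: the natural phase space for the damped clamped beam equation is the energy space $H_D^2(I)\times L_2(I)$, but the regularity stated in Proposition~\ref{ThyperIntroduction} forces us to work on the fractional product space $X_\alpha$. Transferring both the generation \emph{and} the exponential decay to $X_\alpha$ is not automatic and will require either an interpolation argument or a direct Hille--Yosida estimate tailored to $X_\alpha$. Once that point is settled the remainder of the proof is a routine marriage of semigroup theory with the nonlinear estimates provided by Proposition~\ref{L1}.
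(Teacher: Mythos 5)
Your proposal follows essentially the same route as the paper: rewrite \eqref{CPP} as a first-order system for $(u,\partial_t u)$, obtain a $C_0$-semigroup with exponential decay on the fractional phase space $H_D^{2+2\alpha}(I)\times H_D^{2\alpha}(I)$, use Proposition~\ref{L1} to make the nonlinearity bounded and uniformly Lipschitz there, and then run a Banach fixed point with a continuation criterion and a smallness argument for part~(iii). The one step you flag as the main obstacle --- transferring generation and exponential decay to your space $X_\alpha$ --- is settled in the paper exactly by the interpolation route you anticipate: the phase space is \emph{defined} as $\mathbb{H}_\alpha=[\mathbb{H},D(\mathbb{A})]_\alpha$ with $\mathbb{H}=H_D^2(I)\times L_2(I)$, so that the $\mathbb{H}_\alpha$-realization of the generator inherits generation and the decay estimate from \cite[Chapter~V]{LQPP}, while the identification $\mathbb{H}_\alpha=H_D^{2+2\alpha}(I)\times H_D^{2\alpha}(I)$ is Guidetti's interpolation result \cite{GuidettiMathZ}; the exponential decay on $\mathbb{H}$ itself comes from the damping as you say, but via the cited damped second-order results rather than the mere dissipativity of the energy, which by itself only gives a nonincreasing energy. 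Two minor deviations are worth noting. First, the paper settles for a strong solution (Lipschitz in time, hence differentiable a.e., following \cite[Theorem~6.1.6]{Pazy}), which is why only $L_1$-in-time regularity of $\partial_t^2 u$ is asserted; your stronger classical-solution claim would additionally use the analyticity (hence $C^1$-character) of $g$ from Proposition~\ref{L1}, which is available, so nothing breaks, but it is not needed. Second, for (iii) the paper proves a contraction on the set of trajectories staying in $\overline{S}_{(\alpha+1)/2}(\kappa/2)$ for \emph{every} $T>0$, controlling the deviation of the orbit from $\mathbf{u}_0$ through $\|\mathbf{u}_0\|_{D(\mathbb{A}_\alpha)}$, whereas you bound $\|U(t)\|_{X_\alpha}$ directly by decay plus Duhamel and close with a continuity bootstrap and part~(ii); both mechanisms work under the stated hypotheses, since the assumed smallness in $H_D^{4+2\alpha}(I)\times H_D^{2+2\alpha}(I)$ dominates both norms used.
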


For the proof of this proposition, we simplify notation by setting $\gamma=1$. We first reformulate~\eqref{CPP} as a first-order Cauchy problem by using well-known results on cosine functions for which we refer to e.g. \cite[Section 5.5 $\&$ Section 5.6]{A04}: as previously observed, the self-adjoint operator $-A =-\beta \partial_x^4 + \tau \partial_x^2$ with domain $H_D^4(I)$ generates an analytic semigroup on $L_2(I)$ with spectrum contained in $[\mathrm{Re}\, z<0]$. Its inverse $A^{-1}$ is a compact linear operator on $L_2(I)$, and the square root of $A$ is well-defined. Noticing that $A$ is associated with the continuous coercive form
$$
\langle u,v\rangle =\int_{-1}^1 \big(\beta\partial_x^2 u\,\partial_x^2 v +\tau\partial_x u\,\partial_x v\big)\,\mathrm{d} x\ ,\quad u,v\in H_D^2(I)\ ,
$$
the domain of the square root of $A$ is (up to equivalent norms) equal to  $H_D^2(I)$. Consequently, the matrix operator
$$
\mathbb{A}:=\left(\begin{matrix} 0 & -1\\ A & 1\end{matrix}\right)
$$
with domain $D(\mathbb{A}):=H_D^4(I)\times H_D^2(I)$ generates a strongly continuous semigroup $e^{-t\mathbb{A}}$, $t\ge 0$, on the Hilbert space $\mathbb{H}:=H_D^2(I)\times L_2(I)$ (it actually generates a group $e^{-t\mathbb{A}}$, $t\in\R$). Moreover, owing to the damping term $\rd u/\rd t$ in \eqref{CPP}, the semigroup has exponential decay (see, e.g. \cite{Batkai,HZ88}), that is, there are $M\ge 1$ and $\omega>0$ such that
$$
\|e^{-t\mathbb{A}}\|_{\mathcal{L}(\mathbb{H})}\le Me^{-\omega t}\ ,\quad t\ge 0\ .
$$
 Writing ${\bf u}_0=(u^0,u^1)$, ${\bf u}=(u,\partial_tu)$, and 
\begin{equation}\label{f1}
f({\bf u})=\left(\begin{matrix} 0\\-g(u)\end{matrix}\right)\ ,
\end{equation}
we may  reformulate \eqref{CPP} as a hyperbolic semilinear Cauchy problem
\begin{equation}\label{CPhyp}
\dot{{\bf u}}+\mathbb{A} {\bf u}=\lambda f({\bf u})\ ,\quad t>0\ ,\qquad {\bf u}(0)={\bf u}_0
\end{equation}
in $\mathbb{H}$ with $\dot{{\bf u}}$ indicating the time derivative. In order to have a Lipschitz continuous semilinearity~$f$, Proposition~\ref{L1} dictates to shift \eqref{CPhyp} to an interpolation space of more regularity, e.g. to the (complex) interpolation space
$\mathbb{H}_\alpha:=[\mathbb{H},D(\mathbb{A})]_\alpha$  for some $\alpha\in (0,1)$. Indeed, 
we derive from \cite[Chapter V]{LQPP} that the $\mathbb{H}_\alpha$-realization $-\mathbb{A}_\alpha$ of $-\mathbb{A}$, given by
$$
\mathbb{A}_\alpha {\bf u}:= \mathbb{A} {\bf u}\ ,\quad {\bf u}\in D(\mathbb{A}_\alpha):=\left\{{\bf u}\in D(\mathbb{A})\,;\, \mathbb{A}{\bf u}\in \mathbb{H}_\alpha\right\}\ ,
$$
generates a strongly continuous semigroup on $\mathbb{H}_\alpha$ with exponential decay
\bqn\label{expdec}
\|e^{-t\mathbb{A}_\alpha}\|_{\mathcal{L}(\mathbb{H}_\alpha)}\le M_\alpha e^{-\omega t}\ ,\quad t\ge 0\ .
\eqn
Since  (up to equivalent norms, see e.g. \cite{GuidettiMathZ})
\bqn\label{Ha}
\mathbb{H}_\alpha= H_D^{2+2\alpha}(I)\times H_D^{2\alpha}(I)\ ,
\eqn
elliptic regularity theory readily shows that
$$
D(\mathbb{A}_\alpha)= H_D^{4+2\alpha}(I)\times H_D^{2+2\alpha}(I)\ .
$$
Fix now $2\alpha \in (0,1/2)$. Clearly, given $u^0\in H_D^{2+2\alpha}(I)$ with $u^0>-1$ on $I$, the continuous embedding of $H_D^{2+2\alpha}(I)$ in $C([-1,1])$ ensures that there is $\kappa\in (0,1)$ such that $u^0\in S_{(\alpha+1)/2}(\kappa)$, this set being defined in \eqref{setST}. Proposition~\ref{L1} entails that $f:\overline{S}_{(\alpha+1)/2}(\kappa)\times H_D^{2\alpha}(I)\rightarrow \mathbb{H}_\alpha$ is bounded and uniformly Lipschitz continuous. Noticing that $\overline{S}_{(\alpha+1)/2}(\kappa)\times H_D^{2\alpha}(I)$ is endowed with the same topology as $\mathbb{H}_\alpha$, a classical fixed point argument then yields: 

\begin{lem}\label{P1}
Let $2\alpha\in (0,1/2)$ and $\kappa\in (0,1)$. Then, for each ${\bf u}_0=(u^0,u^1)\in \mathbb{H}_\alpha$ with $u^0\in S_{(\alpha+1)/2}(\kappa)$, the Cauchy problem \eqref{CPhyp} has a unique mild solution ${\bf u}=(u,\partial_t u)\in C([0,T_m),\mathbb{H}_\alpha)$ for some maximal time of existence $T_m=T_m({\bf u}_0) \in (0,\infty]$. If $T_m<\infty$, then 
 \begin{equation}\label{blowup0}
\liminf_{t\rightarrow T_m}\ \min_{x\in[-1,1]}\{u(t,x)\}=-1 
\end{equation}
or
\begin{equation}\label{blowup}
\limsup_{t\rightarrow T_m} \|(u(t),\partial_t u(t))\|_{\mathbb{H}_\alpha}=\infty\ .
\end{equation} 
\end{lem}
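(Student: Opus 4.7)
The plan is to run a classical contraction mapping argument for the mild formulation of \eqref{CPhyp}, exploiting the semigroup decay \eqref{expdec}, the identification \eqref{Ha}, and the boundedness and uniform Lipschitz continuity of $f$ from $\overline{S}_{(\alpha+1)/2}(\kappa)\times H_D^{2\alpha}(I)$ to $\mathbb{H}_\alpha$ provided by Proposition~\ref{L1} and \eqref{f1}.

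First, given ${\bf u}_0=(u^0,u^1)\in\mathbb{H}_\alpha$ with $u^0\in S_{(\alpha+1)/2}(\kappa)$, I would pick $\kappa'\in (0,\kappa)$ and $R>\|{\bf u}_0\|_{\mathbb{H}_\alpha}$ such that $u^0\in S_{(\alpha+1)/2}(\kappa')$, and then, for $T>0$ to be fixed, introduce the complete metric space
$$
X_T:=\bigl\{{\bf u}\in C([0,T],\mathbb{H}_\alpha)\,;\,{\bf u}(0)={\bf u}_0,\ \|{\bf u}(t)\|_{\mathbb{H}_\alpha}\le R\text{ and }u(t)\in\overline{S}_{(\alpha+1)/2}(\kappa')\text{ for }t\in[0,T]\bigr\}
$$
endowed with the sup-norm inherited from $C([0,T],\mathbb{H}_\alpha)$. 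On $X_T$ define the Picard operator
$$
\Phi({\bf u})(t):=e^{-t\mathbb{A}_\alpha}{\bf u}_0+\lambda\int_0^t e^{-(t-s)\mathbb{A}_\alpha}f({\bf u}(s))\,\mathrm{d}s,\qquad t\in[0,T].
$$
Using \eqref{expdec} and the boundedness of $f$ on the set $\overline{S}_{(\alpha+1)/2}(\kappa')\times H_D^{2\alpha}(I)$, one bounds $\|\Phi({\bf u})(t)\|_{\mathbb{H}_\alpha}\le M_\alpha\|{\bf u}_0\|_{\mathbb{H}_\alpha}+\lambda M_\alpha T \|f\|_\infty$, which stays below $R$ for $T$ small. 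To check that the first component of $\Phi({\bf u})(t)$ still lies in $\overline{S}_{(\alpha+1)/2}(\kappa')$, I use the continuous embedding $H_D^{2+2\alpha}(I)\hookrightarrow C([-1,1])$, together with the continuity of $t\mapsto\Phi({\bf u})(t)$ in $\mathbb{H}_\alpha$ and the value at $t=0$, to see that both $\|\cdot\|_{H^{2+2\alpha}}< 1/\kappa'$ and the pointwise bound $>-1+\kappa'$ are preserved on a (possibly smaller) interval $[0,T]$. Finally the uniform Lipschitz estimate from Proposition~\ref{L1} and \eqref{expdec} yield the contraction estimate $\|\Phi({\bf u})-\Phi({\bf v})\|_{C([0,T],\mathbb{H}_\alpha)}\le \lambda M_\alpha T L\|{\bf u}-{\bf v}\|_{C([0,T],\mathbb{H}_\alpha)}$ which is a strict contraction for $T$ small. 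Banach's fixed point theorem then gives a unique mild solution in $X_T$; a standard argument using this local uniqueness in each such set produces the unique maximal solution ${\bf u}\in C([0,T_m),\mathbb{H}_\alpha)$.

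For the blowup alternative, I argue by contradiction. Suppose $T_m<\infty$ and both \eqref{blowup0} and \eqref{blowup} fail. Then there is $\kappa''\in(0,1)$ and $M>0$ with $u(t)\ge -1+\kappa''$ on $I$ and $\|{\bf u}(t)\|_{\mathbb{H}_\alpha}\le M$ for all $t\in[0,T_m)$, so ${\bf u}(t)$ remains in a set of the form $\overline{S}_{(\alpha+1)/2}(\kappa''')\times H_D^{2\alpha}(I)$ with a uniform bound in $\mathbb{H}_\alpha$. The local existence time $T$ produced above depends only on $R$, $\kappa'$, and the Lipschitz and boundedness constants of $f$ on such a set, hence can be chosen uniform in the initial data ranging in this compactly parametrized family. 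Restarting the scheme from time $t_0$ close enough to $T_m$ (with initial data ${\bf u}(t_0)$) produces a solution on $[t_0,t_0+T]$ with $t_0+T>T_m$, contradicting the maximality of $T_m$. Thus \eqref{blowup0} or \eqref{blowup} must hold.

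The main obstacle I expect is the bookkeeping around the set $S_{(\alpha+1)/2}(\kappa)$: since the nonlinearity $f$ is only defined where $u>-1$, every step, both the self-mapping property of $\Phi$ and the extension argument for the blowup criterion, requires choosing the constant $\kappa'$ and the existence time $T$ carefully so that the two constraints defining $S_{(\alpha+1)/2}(\kappa')$ are preserved; the embedding $H_D^{2+2\alpha}(I)\hookrightarrow C([-1,1])$ is essential here to translate $\mathbb{H}_\alpha$-closeness into the pointwise positivity $u>-1+\kappa'$.
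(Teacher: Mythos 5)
Your local existence and uniqueness step is the same classical contraction scheme that the paper invokes (and later refines for Proposition~\ref{ThyperIntroduction}~(iii)); apart from trivial bookkeeping (you need $R\ge M_\alpha\|{\bf u}_0\|_{\mathbb{H}_\alpha}+1$ rather than $R>\|{\bf u}_0\|_{\mathbb{H}_\alpha}$, since $M_\alpha\ge 1$), that part is fine.

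The gap is in the blowup alternative. You assert that the local existence time ``depends only on $R$, $\kappa'$, and the Lipschitz and boundedness constants of $f$'', but your construction does not deliver this: the preservation of the constraint $u(t)\in\overline{S}_{(\alpha+1)/2}(\kappa')$ was obtained from the continuity of $t\mapsto\Phi({\bf u})(t)$ at $t=0$, that is, from the smallness of $\|e^{-t\mathbb{A}_\alpha}{\bf u}_0-{\bf u}_0\|_{\mathbb{H}_\alpha}$, and for the merely strongly continuous, non-smoothing (hyperbolic) semigroup generated by $-\mathbb{A}_\alpha$ this modulus of continuity is \emph{not} uniform on norm-bounded subsets of $\mathbb{H}_\alpha$; it depends on the datum itself. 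Moreover the family $\{{\bf u}(t_0)\,;\,t_0\in[T_m-\delta,T_m)\}$ is not ``compactly parametrized'': precompactness of the orbit in $\mathbb{H}_\alpha$ near $T_m$ is precisely what is unknown at this stage. As written, the restart argument therefore has no uniform time step and the contradiction with the maximality of $T_m$ is not established. The fix uses the structure of the system rather than semigroup continuity: if $\|{\bf u}(t)\|_{\mathbb{H}_\alpha}\le M$ and $u(t)\ge-1+\kappa''$ on $[0,T_m)$, then the first component satisfies $u(t)-u(t_0)=\int_{t_0}^t\partial_t u(s)\,\rd s$ with $\partial_t u$ bounded in $H_D^{2\alpha}(I)$ by $M$, so $u$ is Lipschitz in time with values in $H^{2\alpha}(I)$; interpolating this with the uniform $H^{2+2\alpha}(I)$ bound gives $\|u(t)-u(t_0)\|_{C([-1,1])}\le C(M)\,(t-t_0)^{1-\theta}$ for a suitable $\theta\in(0,1)$, with constants depending only on $M$. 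Hence the pointwise constraint is preserved on a time interval depending only on $M$ and $\kappa''$, while the norm constraint in the definition of $\overline{S}$ is handled by working with a smaller $\hat\kappa$ satisfying $1/\hat\kappa\ge M_\alpha M+1$ (Proposition~\ref{L1} applies for every $\kappa\in(0,1)$). With this uniform lower bound on the existence time of the restarted problem, your contradiction argument goes through.
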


The proof of Lemma~\ref{P1} is classical. Nevertheless, since a refinement of it is needed later to show Proposition~\ref{ThyperIntroduction}~(iii), it will be sketched below.

To obtain more regularity on the mild solution ${\bf u}$, let us consider an initial condition in the domain $D(\mathbb{A}_\alpha)$ of the generator $-\mathbb{A}_\alpha$, that is, let ${\bf u}_0=(u^0,u^1) \in H_D^{4+2\alpha}(I)\times H_D^{2+2\alpha}(I)$ with $u^0\in S_{(\alpha+1)/2}(\kappa)$. Then, since $f$ is Lipschitz continuous,
it follows as in the proof of \cite[Theorem~6.1.6]{Pazy}  that  ${\bf u}: [0,T_m)\rightarrow \mathbb{H}_\alpha$ is Lipschitz continuous and whence differentiable almost everywhere with respect to time. Consequently, we obtain (see also \cite[Corollary~4.2.11]{Pazy}):
 
\begin{cor}\label{C31}
Let $2\alpha\in (0,1/2)$ and $\kappa\in (0,1)$. If ${\bf u}_0 =(u^0,u^1)\in H_D^{4+2\alpha}(I)\times H_D^{2+2\alpha}(I)$ with $u^0 \in S_{(\alpha+1)/2}(\kappa)$, then the mild solution ${\bf u}$ to \eqref{CPhyp} is actually a strong solution. That is,~${\bf u}$ is differentiable almost everywhere in time with derivative  $\dot{{\bf u}}$ satisfying $\dot{{\bf u}}\in L_1(0,T;\mathbb{H}_\alpha)$ for each $T\in (0,T_m)$ and $$\dot{{\bf u}}(t)=-\mathbb{A}_\alpha {\bf u}(t)+\lambda f({\bf u}(t))$$ in $\mathbb{H}_\alpha$ for almost every $t\in [0, T_m)$.
\end{cor}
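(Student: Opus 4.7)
The plan is to follow the classical route of \cite[Theorem~6.1.6]{Pazy}: first upgrade the mild solution $\mathbf{u}$ from Lemma~\ref{P1} to a time-Lipschitz function with values in $\mathbb{H}_\alpha$, and then exploit the reflexivity of the Hilbert space $\mathbb{H}_\alpha$ to obtain almost everywhere differentiability and the strong equation.

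Fix an arbitrary $T \in (0, T_m)$. The continuity $\mathbf{u} \in C([0, T], \mathbb{H}_\alpha)$, the embedding $H_D^{2+2\alpha}(I) \hookrightarrow C([-1, 1])$, and the property $u(t) > -1$ on $[0, T_m)$ together furnish some $\tilde{\kappa} \in (0, \kappa]$ for which $u(t) \in \overline{S}_{(\alpha+1)/2}(\tilde{\kappa})$ for every $t \in [0, T]$. Proposition~\ref{L1}, applied with $4\sigma = 2\alpha \in (0, 1/2)$, then guarantees that $f$, restricted to $\overline{S}_{(\alpha+1)/2}(\tilde{\kappa}) \times H_D^{2\alpha}(I)$, is bounded along $\mathbf{u}([0, T])$ by some $K = K(T)$ and Lipschitz continuous into $\mathbb{H}_\alpha$ with some constant $L = L(T)$.

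The core step is the time-Lipschitz estimate. From the variation of constants formula I get, for $h > 0$ with $t + h \le T$,
\begin{equation*}
\mathbf{u}(t+h) - \mathbf{u}(t) = e^{-t\mathbb{A}_\alpha}\bigl(\mathbf{u}(h) - \mathbf{u}_0\bigr) + \lambda \int_0^t e^{-(t-s)\mathbb{A}_\alpha}\bigl[f(\mathbf{u}(s+h)) - f(\mathbf{u}(s))\bigr]\, \rd s\ .
\end{equation*}
The hypothesis $\mathbf{u}_0 \in D(\mathbb{A}_\alpha)$ now enters decisively via the identity $(e^{-h\mathbb{A}_\alpha} - \mathrm{id})\mathbf{u}_0 = -\int_0^h e^{-s\mathbb{A}_\alpha}\mathbb{A}_\alpha \mathbf{u}_0\, \rd s$; combining it with \eqref{expdec} and the bound $K$, one obtains $\|\mathbf{u}(h) - \mathbf{u}_0\|_{\mathbb{H}_\alpha} \le C_1 h$. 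Inserting this estimate together with $\|f(\mathbf{u}(s+h)) - f(\mathbf{u}(s))\|_{\mathbb{H}_\alpha} \le L \|\mathbf{u}(s+h) - \mathbf{u}(s)\|_{\mathbb{H}_\alpha}$ into the preceding identity and applying Gronwall's lemma yields a constant $C(T)$ such that $\|\mathbf{u}(t+h) - \mathbf{u}(t)\|_{\mathbb{H}_\alpha} \le C(T)\, h$ uniformly in $0 \le t \le t+h \le T$.

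Reflexivity of $\mathbb{H}_\alpha$ then converts this Lipschitz regularity into almost everywhere differentiability of $\mathbf{u}$ on $[0, T]$ with $\dot{\mathbf{u}} \in L_\infty(0, T; \mathbb{H}_\alpha) \subset L_1(0, T; \mathbb{H}_\alpha)$, see \cite[Corollary~4.2.11]{Pazy}. Since $f \circ \mathbf{u}$ is itself Lipschitz continuous into $\mathbb{H}_\alpha$ and $\mathbf{u}_0 \in D(\mathbb{A}_\alpha)$, the standard regularization result for semigroups with Lipschitz source and initial datum in the domain of the generator promotes the mild solution to a strong one, i.e., the equation $\dot{\mathbf{u}}(t) = -\mathbb{A}_\alpha \mathbf{u}(t) + \lambda f(\mathbf{u}(t))$ holds in $\mathbb{H}_\alpha$ for almost every $t \in [0, T]$. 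As $T \in (0, T_m)$ is arbitrary, the corollary follows. The sole genuine obstacle is the $O(h)$ bound on $\mathbf{u}(h) - \mathbf{u}_0$: without the hypothesis $\mathbf{u}_0 \in D(\mathbb{A}_\alpha)$ this would degrade to $o(1)$ and the Gronwall step would collapse.
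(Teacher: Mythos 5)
Your argument is correct and is essentially the paper's own: the paper also establishes time-Lipschitz continuity of the mild solution via the argument of \cite[Theorem~6.1.6]{Pazy} (using ${\bf u}_0\in D(\mathbb{A}_\alpha)$ and the uniform Lipschitz continuity of $f$ from Proposition~\ref{L1}), and then passes to a strong solution by reflexivity of $\mathbb{H}_\alpha$ and \cite[Corollary~4.2.11]{Pazy}. You have merely written out the variation-of-constants/Gronwall details that the paper leaves to the cited references.
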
 

Under the assumption of Corollary~\ref{C31} we deduce from ${\bf u}=(u,\partial_t u)$ that, for each $T\in (0,T_m)$,
$$
\partial_t^k u\in C([0, T_m), H_D^{2+2\alpha-2k}(I))\ ,\quad \partial_t^{k+1} u\in  L_1(0, T ; H_D^{2+2\alpha-2k}(I))\ ,
$$ for $k=0,1$ and
\begin{equation}\label{pp}
A u=-\partial_t^2 u-\partial_t u-\lambda g(u)\ .
\end{equation}
Since $g(u)\in C([0,T_m),H_D^{2\alpha}(I))$ by Proposition~\ref{L1}, the right hand side of \eqref{pp} is in $L_1(0, T ; L_2(I))$ for each $T\in (0,T_m)$ and so we derive 
$$
u\in L_1(0, T ; H_D^{4+2\alpha}(I))\ , \quad T\in (0,T_m)\ .
$$
Thus, we have shown parts~(i) and~(ii) of Proposition~\ref{ThyperIntroduction}, and it remains to prove the global existence statement~(iii) for small voltage values $\lambda$ and small initial data therein. To this end, the fixed point argument leading to Lemma~\ref{P1} has to be refined.

\begin{proof}[Proof of Proposition~\ref{ThyperIntroduction}~(iii)]
Let ${\bf u}_0=(u^0,u^1) \in D(\mathbb{A}_\alpha)$ with $u^0\in S_{(\alpha+1)/2}(\kappa)$ for some $\kappa\in (0,1)$. Given $T>0$ introduce the complete metric space
$$
\mathcal{V}_T:=\left\{{\bf u}=(u_1,u_2)\in C([0,T],\mathbb{H}_\alpha)\,;\, u_1(t)\in \overline{S}_{(\alpha+1)/2}(\kappa/2) \text{ for } t\in [0,T]\right\}
$$
and define
$$
\Lambda({\bf u})(t):=\big( \Lambda_1({\bf u}),\Lambda_2({\bf u})\big)(t):= e^{-t\mathbb{A}_\alpha} {\bf u}_0 +\lambda\int_0^t e^{-(t-s)\mathbb{A}_\alpha} f({\bf u}(s))\,\rd s
$$
for $t\in [0,T]$ and ${\bf u}\in\mathcal{V}_T$. Since 
$$
e^{-t\mathbb{A}_\alpha} {\bf u}_0-{\bf u}_0=-\int_0^t e^{-s\mathbb{A}_\alpha} \mathbb{A}_\alpha{\bf u}_0\,\rd s\ ,\quad t\ge 0\ ,
$$
we obtain from \eqref{expdec} and Proposition~\ref{L1} that
\begin{equation*}
\begin{split}
\|\Lambda({\bf u})(t) - {\bf u}_0\|_{\mathbb{H}_\alpha}&\le \| e^{-t\mathbb{A}_\alpha} {\bf u}_0-{\bf u}_0\|_{\mathbb{H}_\alpha} +\lambda \int_0^t \|e^{-(t-s)\mathbb{A}_\alpha} f({\bf u}(s))\|_{\mathbb{H}_\alpha}\,\rd s\\
&\le 
 \frac{M_\alpha}{\omega} \|{\bf u}_0\|_{D(\mathbb{A}_\alpha)} +\lambda \frac{M_\alpha}{\omega} \sup_{v\in S_{(\alpha+1)/2}(\kappa)} \| g(v)\|_{H_D^{2\alpha}(I)}
\end{split}
\end{equation*}
for $t\in [0,T]$ and ${\bf u}\in\mathcal{V}_T$. We then also note, that, if $c_\alpha$ denotes the norm of the embedding of $H_D^{2+2\alpha}(I)$ in $L_\infty(I)$,
\begin{equation*}
\begin{split}
\Lambda_1({\bf u})(t) &= u^0 +\Lambda_1({\bf u})(t) -u^0 \ge -1+\kappa -c_\alpha\| \Lambda_1({\bf u})(t) -u^0\|_{H_D^{2+2\alpha}(I)}\\
& \ge -1+\kappa -c_\alpha \| \Lambda({\bf u})(t) -{\bf u}_0\|_{\mathbb{H}_\alpha}
\end{split}
\end{equation*}
on $I$. From these estimates it is immediate that there are $\lambda(\kappa)>0$ and $N(\kappa)>0$ such that $\Lambda:\mathcal{V}_T\rightarrow \mathcal{V}_T$ defines a contraction {\it for each} $T>0$ provided that $\lambda\le \lambda(\kappa)$ and $ \|{\bf u}_0\|_{D(\mathbb{A}_\alpha)}\le N(\kappa)$. Consequently, the strong solution ${\bf u}=(u,\partial_t u)$ exists globally in time and $ u(t)\in \overline{S}_{(\alpha+1)/2}(\kappa/2)$ for $t\ge 0$.\\
\end{proof}

\subsection{Additional Properties}
Let $\gamma\ge 0$ and let $(u,\psi)$ be the solution \eqref{hyper1}-\eqref{psibc} provided by Proposition~\ref{Aglobal} if $\gamma=0$ or Proposition~\ref{ThyperIntroduction} if $\gamma>0$. We first observe an immediate consequence of the uniqueness results of these propositions and the invariance of the equations with respect to the symmetry $(x,z)\to (-x,z)$.

\begin{cor}\label{Asim} 
If $u^0=u^0(x)$ in Proposition~\ref{Aglobal} or if $u^0=u^0(x)$ and $u^1=u^1(x)$  in Proposition~\ref{ThyperIntroduction}
are even with respect to $x\in I$, then, {for all $t\in [0,T_m)$}, $u=u(t,x)$ and \mbox{$\psi=\psi(t,x,z)$} are even with respect to $x\in I$ as well.
\end{cor}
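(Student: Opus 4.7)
The plan is a standard reflection argument that reduces evenness to the uniqueness statements already proved in Proposition~\ref{Aglobal} and Proposition~\ref{ThyperIntroduction}. For $(t,x) \in [0,T_m) \times I$ and $(x,z)$ in the reflected domain, I introduce
\begin{equation*}
\tilde u(t,x) := u(t,-x), \qquad \tilde\psi(t,x,z) := \psi(t,-x,z),
\end{equation*}
and aim to show that $(\tilde u, \tilde\psi)$ solves \eqref{hyper1}--\eqref{psibc} with the same initial data and in the same regularity class as $(u,\psi)$.

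First I would observe that $(x,z)\mapsto (-x,z)$ is an involution sending $\Omega(u(t))$ bijectively onto $\Omega(\tilde u(t))$, so $\tilde\psi(t,\cdot)$ is well-defined on $\Omega(\tilde u(t))$. Since $\partial_x^{2k}$ ($k=1,2$), $\partial_z^2$, and $\partial_t$ all commute with the reflection $x\mapsto -x$, a direct substitution yields that $\tilde u$ satisfies \eqref{hyper1} with right-hand side built from $\tilde\psi$: indeed, $\partial_x \tilde\psi(t,x,z) = -(\partial_x\psi)(t,-x,z)$ and $\partial_z \tilde\psi(t,x,z) = (\partial_z\psi)(t,-x,z)$, so after squaring the $x$-equation for the pair $(\tilde u,\tilde\psi)$ coincides with the $(-x)$-equation for $(u,\psi)$. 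Likewise $\tilde\psi$ solves \eqref{psi} pointwise in $\Omega(\tilde u(t))$ together with the Dirichlet condition \eqref{psibc} (using $1+\tilde u(t,x)=1+u(t,-x)$). The clamped boundary conditions \eqref{hyper2} persist because $\{-1,+1\}$ is invariant under $x\mapsto -x$ and $\partial_x \tilde u(t,\pm 1)=-\partial_x u(t,\mp 1)=0$. Finally, the initial conditions \eqref{hyper3} are matched by the evenness hypothesis on $u^0$, and additionally on $u^1$ in the hyperbolic case $\gamma>0$.

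Next I would check that $(\tilde u, \tilde\psi)$ has the very same regularity as $(u,\psi)$. This is immediate, since reflection in $x$ is an isometry on every Sobolev space $H^s(I)$ and preserves $H_D^{4\theta}(I)$, the clamped boundary conditions being symmetric in $\pm 1$; via $\tilde\phi(t,x,\eta):=\phi(t,-x,\eta)$ in the straightened variables of Section~\ref{Sec2a}, it also preserves $H^2(\Omega)$. Consequently $(\tilde u, \tilde\psi)$ lies exactly in the solution class of Proposition~\ref{Aglobal} (if $\gamma=0$) or of Proposition~\ref{ThyperIntroduction} (if $\gamma>0$), and is defined on $[0,T_m)$ by maximality of the existence interval. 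The uniqueness part of the relevant proposition then forces $\tilde u = u$ and $\tilde\psi = \psi$ on $[0,T_m)$, which is precisely the claimed evenness of $u(t,\cdot)$ and $\psi(t,\cdot,z)$.

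There is no real obstacle here; the only minor bookkeeping is to track the reflection through each of the Sobolev norms involved and to observe that the maximal existence times for the reflected and original problems necessarily coincide, so that uniqueness can be invoked on the whole of $[0,T_m)$.
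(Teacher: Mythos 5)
Your argument is correct and is exactly the one the paper has in mind: the corollary is stated there as an immediate consequence of the uniqueness assertions in Propositions~\ref{Aglobal} and~\ref{ThyperIntroduction} together with the invariance of \eqref{hyper1}-\eqref{psibc} under the reflection $(x,z)\mapsto(-x,z)$, which is precisely your reflection-plus-uniqueness scheme. Nothing further is needed.
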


We next improve the regularity of $\psi$ from Proposition~\ref{L1} when $\gamma=0$.

\begin{prop}\label{pr.regpsi}
Let $\gamma=0$. Under the assumptions of Proposition~\ref{Aglobal} with $\xi \ge 3/4$, the second component $\psi(t)$ of the solution to \eqref{hyper1}-\eqref{psibc} at time $t\in [0,T_m)$ belongs to $W_p^3(\Omega(u(t))$ for all $p\in (1,2)$.
\end{prop}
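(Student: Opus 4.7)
\emph{Plan.} My plan is to pull the problem back to the fixed rectangle $\Omega = I \times (0,1)$ via the diffeomorphism $T_{u(t)}$, apply a sharp $W_p^3$-elliptic regularity result there, and then transport the regularity back to $\Omega(u(t))$.

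First, I take $\phi(t) = \phi_{u(t)} \in H^2(\Omega)$ from Proposition~\ref{L1} and set $\Phi(t) := \phi(t) - \eta$, which solves $\mathcal{L}_{u(t)} \Phi(t) = F(t)$ on $\Omega$ with homogeneous Dirichlet data, where a direct computation from \eqref{acdc} gives
$$
F(t,x,\eta) = - \varepsilon^2 \eta \left( 2\, \frac{(\partial_x u(t,x))^2}{(1+u(t,x))^2} - \frac{\partial_x^2 u(t,x)}{1+u(t,x)} \right).
$$
The hypothesis $\xi \ge 3/4$ yields $u(t) \in H_D^{4\xi}(I) \hookrightarrow H^3(I) \hookrightarrow W_p^3(I)$ for every $p \in [1,2]$ (by $L_2 \hookrightarrow L_p$ on the bounded interval $I$), so the highest-order coefficients of $\mathcal{L}_{u(t)}$ belong to $W_p^2(\Omega)$, the lower-order coefficients to $W_p^1(\Omega)$, and $F(t) \in W_p^1(\Omega)$.

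Next, I would invoke $W_p^3$-elliptic regularity for this Dirichlet problem on the rectangle. Standard interior and boundary regularity handles everything away from the four corners. At each right-angle corner of $\Omega$, $\Phi(t)$ vanishes on both adjacent edges, so the singular exponents of the frozen principal part are the positive integers $2k$. Because these exponents are integers, a logarithmic resonance occurs whenever $F(t)$ does not vanish at a corner, producing a leading-order correction of the form $c\, r^2 \log r \cdot \sin(2\theta)$ in local polar coordinates; its third derivatives decay like $r^{-1}$, which lies in $L_p$ on a polar neighborhood of the corner precisely when $p < 2$. This is the origin of the threshold in the statement. Combining this corner analysis with the regularity away from the corners yields $\Phi(t) \in W_p^3(\Omega)$, and hence $\phi(t) \in W_p^3(\Omega)$, for every $p \in (1,2)$.

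Finally, because $\psi(t) = \phi(t) \circ T_{u(t)}$ and the diffeomorphism $T_{u(t)}^{-1}$ from \eqref{Tuu} inherits $W_p^3$-regularity from $u(t)$ (while remaining non-degenerate thanks to $u(t) > -1$), the chain rule delivers $\psi(t) \in W_p^3(\Omega(u(t)))$ for every $p \in (1,2)$. The main obstacle I anticipate is the rigorous implementation of the corner $W_p^3$-regularity for the variable-coefficient operator $\mathcal{L}_{u(t)}$ rather than for a frozen constant-coefficient model: the natural strategy is to localize near each corner with a smooth cutoff, freeze the leading coefficients at the corner (to which the explicit singular expansion above applies), and absorb the remaining variable-coefficient perturbation as a lower-order term via a bootstrap starting from the $H^2$-bound on $\phi(t)$ supplied by Proposition~\ref{L1}.
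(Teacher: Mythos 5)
Your reduction to the transformed problem and your identification of the right-hand side $F(t)$ are correct, and your heuristic for the threshold $p<2$ (integer corner exponents for the Dirichlet problem in a right angle, logarithmic resonance, third derivatives of size $r^{-1}$) does identify the true source of the restriction. But the core of your argument --- $W_p^3$ regularity up to the four corners of $\Omega$ for the \emph{variable-coefficient} operator $\mathcal{L}_{u(t)}$ --- is precisely the step you leave as a sketch, and it is a genuine gap rather than a routine localization. There is no off-the-shelf third-order result for variable coefficients of this low regularity: Grisvard's variable-coefficient corner theorems are second-order ($W_q^2$) statements requiring Lipschitz coefficients, while his third-order results are for constant-coefficient principal parts. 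Your ``freeze and absorb'' strategy has to cope with the fact that the coefficient of $\partial_\eta$ in \eqref{acdc} contains $\partial_x^2 u(t)\in H^1(I)$ only, so differentiating the equation (or estimating the commutators in $W_p^1$) produces terms involving $\partial_x^3 u(t)\in L_2(I)$ and third derivatives of $\Phi$ multiplied by merely continuous coefficient increments; closing this by a bootstrap ``starting from the $H^2$-bound'' does not obviously work, since the perturbation touches the principal part and smallness must be measured in a norm that already controls third derivatives near the corner. In addition, if one instead differentiates tangentially, the differentiated unknown satisfies non-Dirichlet conditions on the adjacent side, changing the corner exponents --- none of this is addressed.

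The paper avoids this difficulty by splitting the work between the two domains: the transformed problem on the rectangle is used only to get $\phi(t)-\eta\in W_q^2(\Omega)$ for \emph{all} $q\in(2,\infty)$ via \cite[Theorem~5.2.7]{Grisvard}, which needs just Lipschitz coefficients (available from $H^{4\xi}(I)\hookrightarrow W_\infty^2(I)$) together with the observation that the clamped conditions make the principal part at the corners exactly $\varepsilon^2\partial_x^2+\partial_\eta^2$. The third-order step is then carried out back in the physical domain, where the operator is constant-coefficient: $\Psi(t)=\psi(t)-(1+z)/(1+u(t))$ solves $\varepsilon^2\partial_x^2\Psi+\partial_z^2\Psi=-\varepsilon^2\partial_x^2\bigl((1+z)/(1+u(t))\bigr)$ with homogeneous Dirichlet data in the curvilinear polygon $\Omega(u(t))$ (right angles at the corners, again by the clamped conditions), the right-hand side lies in $W_p^1$ because $4\xi\ge 3$, and \cite[Theorem~5.1.3.1]{Grisvard} yields $\Psi(t)\in W_p^3$ for $p<2$; the regularity of $u(t)$ then gives $\psi(t)\in W_p^3(\Omega(u(t)))$. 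This also dispenses with your final composition/chain-rule step. If you want to keep your route, you must either prove the variable-coefficient $W_p^3$ corner estimate in detail (a substantial piece of work at this coefficient regularity) or restructure the argument as the paper does, performing the third-order step where the equation is constant-coefficient.
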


\begin{proof}
Fix $t\in [0,T_m)$. Owing to the continuous embedding of $H^{s}(I)$ in $W_\infty^2(I)$ for $s>5/2$, the operator $\mathcal{L}_{u(t)}$ defined in \eqref{acdc} has Lipschitz continuous coefficients (in $x$) when written in divergence form \eqref{Ldiv}, and its principal part at the four corners of $\Omega$ is simply $\varepsilon^2 \partial_x^2 + \partial_\eta^2$ thanks to the clamped boundary conditions \eqref{hyper2}. We then infer from \cite[Theorem~5.2.7]{Grisvard} that $(x,\eta)\mapsto \phi(t,x,\eta) - \eta$ belongs to $W_q^2(\Omega)$ for all $q\in (2,\infty)$, the function $\phi(t)=\phi_{u(t)}$ being the solution to \eqref{23}-\eqref{24}. Setting 
$$
\Psi(t,x,z) := \psi(t,x,z) - \frac{1+z}{1+u(t,x)} = \phi_{u(t)}\left( x ,  \frac{1+z}{1+u(t,x)} \right) - \frac{1+z}{1+u(t,x)} 
$$
for $(x,z)\in \Omega(u(t))$, we conclude that 
\begin{equation}
\Psi(t), \psi(t) \in W_q^2(\Omega(u(t)) \;\;\text{ for all }\;\; q\in (2,\infty)\ . \label{reg1}
\end{equation} 
Next, let $p\in (1,2)$. It follows from \eqref{psi}-\eqref{psibc} that $\Psi(t)$ solves
\begin{align}
\varepsilon^2 \partial_x^2 \Psi(t) + \partial_z^2 \Psi(t)  & = - \varepsilon^2 \partial_x^2 \left( \frac{1+z}{1+u(t)}\right) \;\;\text{ in }\;\; \Omega(u(t))\ , \label{reg2a} \\
\Psi(t) & = 0 \;\;\text{ on }\;\; \partial\Omega(u(t))\ , \label{reg2b}
\end{align}
and the right hand side of \eqref{reg2a} belongs to $W_p^1(\Omega(u(t))$ by Proposition~\ref{Aglobal} since $4\xi\ge 3$. Owing to the clamped boundary conditions \eqref{hyper2} and the constraint $p<2$, we are in a position to apply \cite[Theorem~5.1.3.1]{Grisvard} and conclude that $\Psi(t)$ belongs to $W_p^3(\Omega(u(t))$. Clearly, $\psi(t)$ also belongs to $W_p^3(\Omega(u(t))$ thanks to the regularity of $u(t)$, and the proof is complete.
\end{proof}

\section{Energy identities}\label{SectEnergy} 

The aim of this section is to establish the energy equalities \eqref{rc2} and \eqref{rc2b}. Under the assumptions of Proposition~\ref{Aglobal} if $\gamma= 0$ or the assumptions of Proposition~\ref{ThyperIntroduction} if $\gamma>0$ let $(u,\psi)$ be the solution \eqref{hyper1}-\eqref{psibc}. Since $u$ merely belongs to $C^1\big((0,T_m),L_2(I)\big)$, we cannot apply directly Proposition~\ref{pr.z1} and thus have to invoke an approximation argument. To this end, let us introduce the Steklov averages defined by 
$$
u_\delta (t,x):=\frac{1}{\delta}\int_t^{t+\delta} u(s,x)\,\rd s\ ,\qquad t\in [0,T_m)\ ,\quad x\in I\ ,\quad \delta\in (0,T_m-t)\ .
$$
Fix $T\in (0,T_m)$ and let $\delta\in (0,T_m-T)$ in the following. Owing to Proposition~\ref{Aglobal} or Proposition~\ref{ThyperIntroduction}, the function $u_\delta$ belongs to $C^1([0,T],H_{D}^{2+2\nu}(I))$ for some $\nu>0$  with 
$$
\partial_t u_\delta(t) = \frac{u(t + \delta) - u(t)}{\delta}\ ,\quad t\in [0,T]\ .
$$ 
In addition, 
\begin{equation}
u_\delta \longrightarrow u \ \text{ in }\ C([0,T], H_{D}^{2+2\nu}(I))\ \text{ as }\ \delta\to 0 \ , \label{spirou}
\end{equation}
which together with Proposition~\ref{L1} entails that
\begin{equation}
g(u_\delta)\longrightarrow g(u)\ \text{ in }\ C([0,T],L_2(I))\ \text{ as }\ \delta\to 0 \ . \label{fantasio}
\end{equation}
Moreover, if $\gamma>0$, then $\partial_t u\in C([0,T],L_2(I))$ by Proposition~\ref{ThyperIntroduction} so that  $\partial_t u_\delta \longrightarrow \partial_t u$ in $C([0,T],L_2(I))$ as $\delta\to 0$ and thus
\begin{equation}\label{spip}
 \partial_t u_\delta \longrightarrow \partial_t u\ \text{ in }\ L_2((0,T)\times I)\ \text{ as }\ \delta\to 0 \ .
\end{equation}
If $\gamma=0$, then $g(u)$ belongs to $C([0,T],L_2(I))$ by Proposition~\ref{L1} and Proposition~\ref{Aglobal} and thus also to $L_2(0,T;L_2(I))$. Hence, the maximal regularity property of the operator $A$ (see \cite[III.Example~4.7.3 \& III.Theorem~4.10.8]{LQPP}) in \eqref{CP} implies that $u\in W_2^1(0,T;L_2(I))$, from which we deduce \eqref{spip} in this case as well. Now, due to Proposition~\ref{pr.z1} we have
$$
\mathcal{E}_e(u_\delta(t_2)) - \mathcal{E}_e(u_\delta(t_1)) = -  \int_{t_1}^{t_2} \int_{-1}^1 g(u_\delta(s)) \partial_t u_\delta(s)\ \rd x \rd s\ , \quad 0\le t_1 \le t_2 \le T\ ,
$$
and, writing the integrals $\mathcal{E}_e(u_\delta(t_k))$ in terms of $\phi_{u_\delta(t_k)}$ on $\Omega$, $k=1,2$, and using Proposition~\ref{L1} along with \eqref{spirou}, \eqref{fantasio}, and \eqref{spip}, we are in a position to pass to the limit as $\delta\to 0$ in this identity and conclude that
\begin{equation}\label{uui}
\mathcal{E}_e(u(t_2)) - \mathcal{E}_e(u(t_1)) = -  \int_{t_1}^{t_2} \int_{-1}^1 g(u(s)) \partial_t u(s)\ \rd x \rd s\ , \quad 0\le t_1 \le t_2 \le T\ .
\end{equation}
Next, according to the regularity of $u$, $\partial_t u$, and $\gamma^2\partial_t^2 u$, a classical argument shows that
\begin{equation*}
\begin{split}
\frac{\gamma^2}{2}&\|\partial_tu(t_2)\|_{L_2(I)}^2+\mathcal{E}_b(u(t_2))+\mathcal{E}_s(u(t_2))-
\frac{\gamma^2}{2}\|\partial_tu(t_1)\|_{L_2(I)}^2-\mathcal{E}_b(u(t_1))-\mathcal{E}_s(u(t_1))\\
& = \int_{t_1}^{t_2}\int_{-1}^1  \left(\gamma^2\partial_t^2 u(s) +\beta \partial_x^4 u(s)-\tau \partial_x^2 u(s)\right) \partial_t u(s)\ \rd x \rd s
\end{split}
\end{equation*}
for $0\le t_1 \le t_2 \le T$. Finally, multiplying \eqref{uui} by $-\lambda$, adding the resulting identity to the previous equation, and using \eqref{hyper1} give \eqref{rc2} and \eqref{rc2b}.

\section{A refined criterion for global existence}\label{Sec6} 

We now shall improve the global existence criteria stated in parts~(ii) of Propositions~\ref{Aglobal} and~\ref{ThyperIntroduction} for small $\gamma$ by showing that norm blowup cannot occur in finite time, whence touchdown of $u$ on the ground plate is the only possible finite time singularity. The proofs for both cases follow the same lines, but differ at certain steps. We thus first provide the proof of the parabolic case $\gamma=0$ in the next subsection and perform then the one for the hyperbolic case $\gamma>0$ (with $\gamma$ small) in the subsequent subsection.

\subsection{Parabolic Case: $\gamma=0$}\label{rcge1}

\begin{prop}\label{pr.rc1}
Let $\gamma=0$.  Given $4\xi\in (2,4]$ and an initial condition $u^0\in  H_\B^{4\xi}(I)$ such that $u^0(x)>-1$ for $x\in I$, let $(u,\psi)$ be the corresponding solution to \eqref{hyper1}-\eqref{psibc} defined on $[0,T_m)$. If there are $T_0>0$ and $\kappa_0\in (0,1)$ such that
\begin{equation}
u(t,x) \ge -1 + \kappa_0\ , \qquad x \in I\ , \ \ t\in [0,T_0]\cap [0,T_m)\ , \label{rc1}
\end{equation}
then $T_m\ge T_0$ and
$$
\|u(t)\|_{H^{4\xi}(I)}\le c(\kappa_0,T_0)\ ,\quad t\in [0,T_0]\ . 
$$
\end{prop}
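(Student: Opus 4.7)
The goal is to bound $\|u(t)\|_{H^{4\xi}(I)}$ on $[0,T_0]\cap[0,T_m)$ by a constant depending only on $\kappa_0$ and $T_0$; Proposition~\ref{Aglobal}~(ii) then yields $T_m\geq T_0$ together with the desired bound. The strategy proceeds in three steps. First, because $-\lambda g(u)\leq 0$, the mild formulation gives $u(t)\leq e^{-tA}u^0$ pointwise in $I$ as soon as one knows the semigroup $e^{-tA}$ is positivity preserving, which follows from the positivity of the Green's function of $A=\beta\partial_x^4-\tau\partial_x^2$ under clamped boundary conditions on $I$ (a classical one-dimensional result). Since $e^{-tA}$ is bounded on $H_D^2(I)\hookrightarrow L_\infty(I)$ (by coercivity of the associated form and analyticity), we get $\|e^{-tA}u^0\|_{L_\infty(I)}\leq c\|u^0\|_{H^2(I)}$; combined with~\eqref{rc1}, this produces $\|u(t)\|_{L_\infty(I)}\leq M(u^0)$ on $[0,T_0]\cap[0,T_m)$.

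\textbf{Step 2 ($H^2$-bound via the energy identity).} Testing~\eqref{psi} in $\Omega(u)$ against $\psi-(1+z)/(1+u)$, which vanishes on $\partial\Omega(u)$ by~\eqref{psibc}, and performing a direct computation using~\eqref{rc1} yield
\begin{equation*}
\mathcal{E}_e(u)\leq c(\kappa_0)\bigl(1+\|\partial_x u\|_{L_2(I)}^2\bigr)\ .
\end{equation*}
Integration by parts with $u(\pm 1)=0$ gives $\|\partial_x u\|_{L_2(I)}^2=-\int_I u\,\partial_x^2u\,\rd x\leq c\,\|u\|_{L_\infty(I)}\|\partial_x^2 u\|_{L_2(I)}$, so Young's inequality together with Step~1 produces $\|\partial_x u\|_{L_2(I)}^2\leq \delta\|\partial_x^2 u\|_{L_2(I)}^2+C(\delta,M)$ for any $\delta>0$. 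Inserting these estimates into the energy inequality $\mathcal{E}_b(u(t))+\mathcal{E}_s(u(t))\leq \mathcal{E}(u^0)+\lambda\mathcal{E}_e(u(t))$ derived from~\eqref{rc2}, and choosing $\delta$ so small that $\lambda\,c(\kappa_0)\,\delta<\beta/4$, the $\|\partial_x^2 u\|_{L_2}^2$-term on the right is absorbed into $\mathcal{E}_b$, leaving $\|u(t)\|_{H^2(I)}\leq C_2(\kappa_0)$ on $[0,T_0]\cap[0,T_m)$.

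\textbf{Step 3 (bootstrap to $H^{4\xi}$).} Since~\eqref{rc2} also gives $\partial_t u\in L_2(0,T_0;L_2(I))$, we conclude by a standard bootstrap based on the mild formulation $u(t)=e^{-tA}u^0-\lambda\int_0^t e^{-(t-s)A}g(u(s))\,\rd s$, the analytic-semigroup smoothing estimate $\|e^{-sA}\|_{\mathcal{L}(H_D^{4\sigma}(I),H_D^{4\xi}(I))}\leq c\,s^{-(\xi-\sigma)}$ (for $\sigma<\xi$), and Proposition~\ref{L1}, which controls $g(u)$ in $H_D^{4\sigma}(I)$, $4\sigma<1/2$, in terms of $\|u\|_{H_D^{4\theta}(I)}$ for $4\theta>2$. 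Starting from the regularity $u^0\in H_D^{4\xi}(I)$ and iteratively upgrading the regularity of $u$, one reaches $\|u(t)\|_{H^{4\xi}(I)}\leq c(\kappa_0,T_0)$ on $[0,T_0]\cap[0,T_m)$ in finitely many steps.

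The main obstacle is Step~2: without the $L_\infty$-upper bound from Step~1, one could only estimate $\|\partial_x u\|_{L_2}^2\leq C\,\mathcal{E}_b(u)$ via Poincar\'e, and the ensuing energy estimate would close only for $\lambda$ small (depending on $\kappa_0$). The $L_\infty$-bound enables Young's inequality, decoupling the absorption constant from $\lambda$ and making the argument work for \emph{arbitrary} $\lambda>0$. The initial bootstrap step, from the $L_\infty(H^2)$-bound to the slightly stronger regularity needed by Proposition~\ref{L1}, is another technical point, handled via the analyticity of $e^{-tA}$ together with the regularity $u^0\in H_D^{4\xi}(I)$.
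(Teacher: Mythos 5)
Your Step~1 contains a genuine error, and Step~3 hides the technical heart of the argument. Concerning Step~1: positivity of the \emph{elliptic} Green's function of $\beta\partial_x^4-\tau\partial_x^2$ with clamped boundary conditions (the cited sign-preserving results of Grunau, Owen, Lauren\c{c}ot--Walker) does \emph{not} imply that the semigroup $e^{-tA}$ is positivity preserving; fourth-order parabolic problems have sign-changing kernels and no comparison principle, which the paper itself emphasizes (``This is no longer true when $\beta>0$''). Hence the pointwise bound $u(t)\le e^{-tA}u^0$ and the resulting $L_\infty$-upper bound are unjustified. The paper circumvents this precisely by \emph{not} seeking an $L_\infty$-bound: it tests the equation with the positive eigenfunction $\zeta_1$ of $A$ (only the eigenfunction's positivity and $g\ge 0$ are needed) to get a weighted $L_1$-bound on $u$ (Lemma~\ref{le.rc3}), and then uses the Poincar\'e-type inequality of Lemma~\ref{le.rc4}, $\|\partial_x v\|_{L_2}^2\le\delta\|\partial_x^2v\|_{L_2}^2+K(\delta)\bigl(\int\zeta_1|v|\,\rd x\bigr)^2$, in place of your interpolation $\|\partial_x u\|_{L_2}^2\le\|u\|_\infty\|\partial_x^2u\|_{L_2}$. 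With this replacement your Step~2 closes for arbitrary $\lambda$ exactly as in the paper (choose $\delta\sim\beta\kappa_0/\lambda\varepsilon^2$), so your side remark that an $L_\infty$-bound is what decouples the absorption from $\lambda$ is also inaccurate.

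Concerning Step~3: the bootstrap as described is circular. Proposition~\ref{L1} only bounds $g(u)$ in $H^{4\sigma}(I)$, $4\sigma<1/2$, when $u$ is controlled in $H^{4\theta}(I)$ with $4\theta>2$, i.e.\ strictly more regularity than the $H^2$-bound delivered by Step~2; and any Duhamel estimate of $u(t)$ in $H^{4\theta}$ requires a bound on $g(u(s))$, which in turn requires the $H^{4\theta}$-bound you are trying to prove. Invoking ``analyticity of $e^{-tA}$ together with $u^0\in H_D^{4\xi}(I)$'' does not break this circle: the unknown growth of $\|g(u)\|$ in terms of $\|u\|_{H^{4\theta}}$ prevents any Gronwall closure. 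The paper's proof devotes Lemma~\ref{le.rc7} to exactly this point: a delicate analysis of the transformed elliptic problem (testing with $\partial_\eta^2\Phi$, trace and interpolation estimates, pointwise multiplication in Sobolev spaces) shows $\|g(u)\|_{H^\sigma(I)}\le C_2(\kappa_0,\sigma)\bigl(1+\|u\|_{H^2(I)}^{44}\bigr)$, i.e.\ an estimate of the nonlinearity in terms of the $H^2$-norm \emph{alone} plus the lower bound \eqref{rc1}. Only with such an estimate does the $H^2$-bound from the energy argument feed into parabolic regularity for \eqref{CP} and yield the $H^{4\xi}$-bound and $T_m\ge T_0$. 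Without a substitute for Lemma~\ref{le.rc7}, your argument does not close.
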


The remainder of this section is devoted to the proof of Proposition~\ref{pr.rc1} and requires several auxiliary results. From now on, $(u,\psi)$ is the solution to \eqref{hyper1}-\eqref{psibc} satisfying \eqref{rc1}.

\medskip

We first show that a weighted $L_1$-norm of $u$ is controlled during time evolution. We recall that, according to \cite[Theorem~4.6]{LaurencotWalker_JAM}, the operator $\beta\partial_x^4 - \tau \partial_x^2$ supplemented with the clamped boundary conditions \eqref{hyper2} has a positive eigenvalue $\mu_1>0$ with a corresponding positive eigenfunction $\zeta_1\in H_{D}^4(I)$ satisfying $\|\zeta_1\|_{L_1(I)}=1$, see also \cite{Gr02,Ow97}.  

\begin{lem}\label{le.rc3}
For $t\in [0,T_m)$,
\begin{equation}
\int_{-1}^1 \zeta_1(x) |u(t,x)|\ \mathrm{d}x \le 2 + \int_{-1}^1 \zeta_1(x) u(t,x)\ \mathrm{d}x \le 2 + \left( \int_{-1}^1 \zeta_1(x) u^0(x)\ \mathrm{d}x \right) \e^{-\mu_1 t}\ . \label{rc20}
\end{equation}
\end{lem}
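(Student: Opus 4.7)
For the second inequality in \eqref{rc20}, the idea is to test the evolution equation \eqref{hyper1} (with $\gamma=0$) against the positive eigenfunction $\zeta_1$. By Proposition~\ref{Aglobal}(i), $u\in C^1((0,T_m),L_2(I))\cap C((0,T_m),H_D^4(I))$ and $u\in C([0,T_m),H_D^{4\xi}(I))$, so the function $y(t):=\int_{-1}^{1}\zeta_1(x)\,u(t,x)\,\rd x$ is continuous on $[0,T_m)$ and continuously differentiable on $(0,T_m)$. Since both $\zeta_1$ and $u(t)$ belong to $H_D^4(I)$, four integrations by parts, in which all boundary contributions vanish thanks to the clamped boundary conditions \eqref{hyper2} shared by $u(t)$ and $\zeta_1$, yield
\begin{equation*}
\int_{-1}^{1}\zeta_1\bigl(\beta\partial_x^4 u-\tau\partial_x^2 u\bigr)\,\rd x
= \int_{-1}^{1} u\bigl(\beta\partial_x^4 \zeta_1-\tau\partial_x^2 \zeta_1\bigr)\,\rd x = \mu_1\,y(t)\ .
\end{equation*}

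Combining this with \eqref{hyper1} and noting that $g(u)\ge 0$ (by its definition recalled after Proposition~\ref{L1}) together with $\zeta_1>0$ gives the scalar differential inequality $y'(t)+\mu_1 y(t)=-\lambda\int_{-1}^{1}\zeta_1\,g(u(t))\,\rd x\le 0$ on $(0,T_m)$. An integrating factor argument, i.e.\ $(e^{\mu_1 t}y)'\le 0$, together with the continuity of $y$ at $t=0$, then yields $y(t)\le y(0)\,e^{-\mu_1 t}$ for all $t\in[0,T_m)$, which is precisely the second inequality of \eqref{rc20}.

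For the first inequality, the key input is the pointwise lower bound $u(t,x)>-1$ on $[0,T_m)\times I$ from Proposition~\ref{Aglobal}(i). A trivial case distinction ($u\ge 0$ gives $|u|=u\le u+2$; $-1<u<0$ gives $|u|=-u<1\le u+2$) shows $|u(t,x)|\le u(t,x)+2$ pointwise. Multiplying by $\zeta_1\ge 0$, integrating over $I$, and using the normalization $\|\zeta_1\|_{L_1(I)}=1$ yields $\int_{-1}^{1}\zeta_1|u(t)|\,\rd x\le\int_{-1}^{1}\zeta_1 u(t)\,\rd x+2$, as required.

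\textbf{Main obstacle.} The calculation is essentially routine once the positive eigenpair $(\mu_1,\zeta_1)$ is available; this existence (non-trivial for a fourth-order operator with clamped boundary conditions) is taken from \cite{LaurencotWalker_JAM} and is really the only non-elementary ingredient. The value of the lemma lies in the fact that, absent a comparison principle for $\beta\partial_x^4-\tau\partial_x^2$, testing against $\zeta_1$ still extracts a scalar, exponentially decaying control on a weighted integral of $u$, which plays the role that the maximum principle did in the second-order analysis of \cite{ELW1}.
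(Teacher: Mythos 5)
Your proposal is correct and follows essentially the same route as the paper: test \eqref{hyper1} (with $\gamma=0$) against the positive eigenfunction $\zeta_1$, use the non-positivity of the right-hand side to get $y'+\mu_1 y\le 0$ for $y(t)=\int_{-1}^1\zeta_1 u(t)\,\rd x$, and then exploit $u>-1$ together with $\|\zeta_1\|_{L_1(I)}=1$ for the first inequality. The paper phrases the last step via $|u|=u+2(-u)_+\le u+2$, which is the same pointwise bound as your case distinction, so there is nothing to add.
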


\begin{proof}
It readily follows from \eqref{hyper1}, the properties of $\zeta_1$, and the non-positivity of the right hand side of \eqref{hyper1} that, for $t\in [0,T_m)$, 
$$
\frac{\mathrm{d}}{\mathrm{d}t} \int_{-1}^1 \zeta_1(x) u(t,x)\ \mathrm{d}x + \mu_1 \int_{-1}^1 \zeta_1(x) u(t,x)\ \mathrm{d}x \le 0\ ,
$$
hence
\begin{equation}
\int_{-1}^1 \zeta_1(x) u(t,x)\ \mathrm{d}x \le \left( \int_{-1}^1 \zeta_1(x) u_0(x)\ \mathrm{d}x \right) e^{-\mu_1 t}\ . \label{rc21}
\end{equation}
We next observe that, since $u(t,x)>-1$ for $(t,x)\in [0,T_m)\times I$,
\begin{align*}
\int_{-1}^1 \zeta_1(x) |u(t,x)|\ \mathrm{d}x = & \int_{-1}^1 \zeta_1(x) u(t,x)\ \mathrm{d}x  + 2 \int_{-1}^1 \zeta_1(x) \left( -u(t,x) \right)_+\ \mathrm{d}x \\
\le & \int_{-1}^1 \zeta_1(x) u(t,x)\ \mathrm{d}x  + 2 \int_{-1}^1 \zeta_1(x) \ \mathrm{d}x \ ,
\end{align*}
which, together with \eqref{rc21}, gives \eqref{rc20}.
\end{proof}

In view of Lemma~\ref{le.rc3}, the following Poincar\'e-like inequality shall be useful later on, its proof being performed by a classical contradiction argument, which we omit here.
 
\begin{lem}\label{le.rc4}
Given $\delta>0$, there is $K(\delta)>0$ such that
\begin{equation}
\|\partial_x v \|_{L_2(I)}^2 \le \delta \|\partial_x^2 v \|_{L_2(I)}^2 + K(\delta) \left(\int_{-1}^1 \zeta_1(x) |v(x)|\ \mathrm{dx}\right)^2\ , \quad v\in H_{D}^2(I)\ . \label{rc22}
\end{equation}
\end{lem}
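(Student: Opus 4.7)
The plan is to argue by contradiction in the standard Rellich-Kondrachov style. Suppose the conclusion fails. Then there exist $\delta_0 > 0$ and a sequence $(v_n)_{n\ge 1}$ in $H_D^2(I)$ such that
$$
\|\partial_x v_n\|_{L_2(I)}^2 > \delta_0 \|\partial_x^2 v_n\|_{L_2(I)}^2 + n \left(\int_{-1}^1 \zeta_1(x) |v_n(x)|\,\mathrm{d}x\right)^2
$$
for every $n\ge 1$. In particular $\partial_x v_n \not\equiv 0$, so after renormalizing by setting $w_n := v_n / \|\partial_x v_n\|_{L_2(I)} \in H_D^2(I)$ we have $\|\partial_x w_n\|_{L_2(I)} = 1$ together with
$$
\delta_0 \|\partial_x^2 w_n\|_{L_2(I)}^2 + n \left(\int_{-1}^1 \zeta_1(x) |w_n(x)|\,\mathrm{d}x\right)^2 < 1\ , \quad n \ge 1\ .
$$

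Next I would extract a convergent subsequence. Since $w_n \in H_D^2(I) \subset H_0^1(I)$, the Poincar\'e inequality yields $\|w_n\|_{L_2(I)} \le C\|\partial_x w_n\|_{L_2(I)} = C$, and combined with the bound $\|\partial_x^2 w_n\|_{L_2(I)}^2 < 1/\delta_0$ it shows that $(w_n)$ is bounded in $H^2(I)$. By the compactness of the embedding $H^2(I) \hookrightarrow H^1(I)$, there exist $w \in H^1(I)$ and a subsequence (not relabeled) such that $w_n \to w$ in $H^1(I)$. Passing to the limit in the normalization gives $\|\partial_x w\|_{L_2(I)} = 1$, so in particular $w \not\equiv 0$.

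Finally I would derive the contradiction. From the $L_2$-convergence $w_n \to w$ it follows that $|w_n| \to |w|$ in $L_2(I)$, and since $\zeta_1 \in L_2(I)$ we obtain
$$
\int_{-1}^1 \zeta_1(x) |w_n(x)|\,\mathrm{d}x \longrightarrow \int_{-1}^1 \zeta_1(x) |w(x)|\,\mathrm{d}x\ .
$$
But the displayed inequality forces $\int_{-1}^1 \zeta_1(x) |w_n(x)|\,\mathrm{d}x < n^{-1/2}$, so the limit integral vanishes. Recalling that $\zeta_1 > 0$ on $I$ (the positive principal eigenfunction of the clamped operator, as stated before Lemma~\ref{le.rc3}), this forces $w \equiv 0$ on $I$, contradicting $\|\partial_x w\|_{L_2(I)} = 1$. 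The only subtlety I expect is the strict positivity of $\zeta_1$ in the interior of $I$, which is however guaranteed by its construction in \cite{LaurencotWalker_JAM}; everything else is the standard compactness-contradiction scheme.
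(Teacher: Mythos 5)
Your argument is correct and is precisely the ``classical contradiction argument'' that the paper invokes and omits: normalization, boundedness in $H^2(I)$ via Poincar\'e's inequality, compactness of $H^2(I)\hookrightarrow H^1(I)$, and the positivity of $\zeta_1$ forcing the limit to vanish, contradicting $\|\partial_x w\|_{L_2(I)}=1$. No gaps; the details you flag (non-vanishing of $\partial_x v_n$, strict positivity of $\zeta_1$ in the interior) are handled correctly.
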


We next investigate the relationship between $\mathcal{E}_b+\mathcal{E}_s$ and $\mathcal{E}_e$ introduced in \eqref{TotalEnergy}-\eqref{ElecEnergy} and begin with the following upper bound for the latter.

\begin{lem}\label{le.rc5}
For $t\in [0,T_m)$,
\begin{equation}
\mathcal{E}_e(u(t)) \le \int_{-1}^1 \left( 1 + \varepsilon^2 |\partial_x u(t,x)|^2 \right)\ \frac{\mathrm{d}x}{1+u(t,x)}\ . \label{rc23}
\end{equation}
\end{lem}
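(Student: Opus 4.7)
The plan is to exploit the variational characterization of $\psi_{u(t)}$ as the minimizer of a Dirichlet energy and then evaluate that energy on a convenient explicit competitor.

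First, since the boundary value problem \eqref{psi}-\eqref{psibc} is the Euler--Lagrange equation of the strictly convex functional
\begin{equation*}
\phi\longmapsto \int_{\Omega(u(t))}\bigl(\varepsilon^2|\partial_x\phi|^2+|\partial_z\phi|^2\bigr)\,\mathrm{d}(x,z),
\end{equation*}
I would note that $\psi_{u(t)}$ is its unique minimizer over the affine subspace of $H^1(\Omega(u(t)))$ consisting of functions whose trace on $\partial\Omega(u(t))$ coincides with $(1+z)/(1+u(t,x))$. Consequently, $\mathcal{E}_e(u(t))$ is controlled from above by the Dirichlet integral of any admissible competitor.

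The obvious candidate is the affine-in-$z$ function $\chi(x,z):=(1+z)/(1+u(t,x))$ itself, which manifestly realizes the prescribed boundary values (in particular $\chi(\pm 1,z)=1+z$ thanks to the clamped condition $u(t,\pm 1)=0$) and lies in $H^1(\Omega(u(t)))$ by the lower bound $u(t)>-1$ together with the $H^2$-regularity of $u(t)$ supplied by Proposition~\ref{Aglobal}. A direct computation of $\partial_x\chi$ and $\partial_z\chi$, followed by carrying out the $z$-integration explicitly over $(-1,u(t,x))$, gives
\begin{equation*}
\int_{\Omega(u(t))}\bigl(\varepsilon^2|\partial_x\chi|^2+|\partial_z\chi|^2\bigr)\,\mathrm{d}(x,z)=\int_{-1}^1\frac{\mathrm{d}x}{1+u(t,x)}+\frac{\varepsilon^2}{3}\int_{-1}^1\frac{|\partial_x u(t,x)|^2}{1+u(t,x)}\,\mathrm{d}x,
\end{equation*}
and the bound \eqref{rc23} follows at once since $1/3\le 1$ (one in fact obtains a slightly sharper inequality than stated).

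I do not foresee any genuine obstacle. The only point requiring a brief justification is the identification of $\psi_{u(t)}$ as the unique Dirichlet minimizer, but this is standard given the $H^2$-regularity supplied by Proposition~\ref{L1}. The factor $\varepsilon^2$ on the right of \eqref{rc23}, rather than the sharper $\varepsilon^2/3$ that this test function actually delivers, is presumably retained simply to streamline the subsequent energy estimates.
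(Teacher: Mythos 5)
Your argument is correct and is essentially the paper's own proof in variational packaging: the paper multiplies \eqref{psi} by $\psi-\chi$ with $\chi(x,z)=(1+z)/(1+u(t,x))$, integrates by parts and applies Young's inequality, which is precisely the Dirichlet principle with the competitor $\chi$ carried out by hand. Your version is sound (the identification of $\psi_{u(t)}$ as the minimizer follows from Green's formula since $\psi_{u(t)}\in H^2(\Omega(u(t)))$ and $\psi_{u(t)}-\chi$ has zero trace) and even records the slightly sharper constant $\varepsilon^2/3$, which the paper's computation also yields but does not keep.
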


\begin{proof}
Let $t\in [0,T_m)$. We multiply \eqref{psi} by $\psi(t,x,z) - (1+z)/(1+u(t,x))$, integrate over $\Omega(u(t))$, and use Green's formula. Owing to \eqref{psibc}, the boundary terms vanish and we obtain
\begin{align*}
\mathcal{E}_e(u(t)) = & \int_{\Omega(u(t))} \left[ - \varepsilon^2 \partial_x \psi(t,x,z) \frac{(1+z)}{(1+u(t,x))^2} \partial_x u(t,x) + \frac{\partial_z \psi(t,x,z)}{1+u(t,x)} \right]\ \mathrm{d} (x,z) \ .
\end{align*}
We then infer from Young's inequality that 
\begin{align*}
\mathcal{E}_e(u(t))& \le  \frac{1}{2} \mathcal{E}_e(u(t))  + \frac{1}{2} \int_{\Omega(u(t))} \left[ \varepsilon^2 \frac{(1+z)^2}{(1+u(t,x))^4} |\partial_x u(t,x)|^2 + \frac{1}{(1+u(t,x))^2} \right]\ \mathrm{d} (x,z) \ ,
\end{align*}
from which \eqref{rc23} readily follows.
\end{proof}

Up to now, we have not used the lower bound \eqref{rc1} on $u$. It comes into play in the next result.

\begin{lem}\label{le.rc6}
There is $C_1(\kappa_0)>0$ such that
\begin{equation}
\mathcal{E}(u(t)) \ge \frac{1}{2}  \big(\mathcal{E}_b(u(t))+\mathcal{E}_s(u(t))\big) - C_1(\kappa_0)\ , \quad t\in [0,T_0]\cap [0,T_m)\ . \label{rc24}
\end{equation} 
\end{lem}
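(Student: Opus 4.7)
The strategy is to estimate $\lambda \mathcal{E}_e(u(t))$ from above by $\tfrac{1}{2}\mathcal{E}_b(u(t))$ plus a constant depending only on $\kappa_0$ (and $u^0,\lambda,\varepsilon,\beta$), which immediately yields the stated lower bound on $\mathcal{E}(u(t)) = \mathcal{E}_b(u(t))+\mathcal{E}_s(u(t)) - \lambda \mathcal{E}_e(u(t))$ since $\mathcal{E}_s(u(t))\ge 0$. The case $\lambda=0$ being trivial, we assume $\lambda>0$ throughout.

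The first step is to invoke Lemma~\ref{le.rc5} and use the lower bound \eqref{rc1} to write
\begin{equation*}
\mathcal{E}_e(u(t)) \le \int_{-1}^1 \bigl(1 + \varepsilon^2 |\partial_x u(t,x)|^2\bigr) \frac{\rd x}{1+u(t,x)} \le \frac{2}{\kappa_0} + \frac{\varepsilon^2}{\kappa_0}\, \|\partial_x u(t)\|_{L_2(I)}^2\ ,
\end{equation*}
valid for $t\in [0,T_0]\cap [0,T_m)$.

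The second step is to control $\|\partial_x u(t)\|_{L_2(I)}^2$ by a small multiple of $\mathcal{E}_b(u(t)) = \tfrac{\beta}{2}\|\partial_x^2 u(t)\|_{L_2(I)}^2$ plus a bounded term. This is where Lemmas~\ref{le.rc3} and~\ref{le.rc4} enter: for any $\delta>0$,
\begin{equation*}
\|\partial_x u(t)\|_{L_2(I)}^2 \le \delta\, \|\partial_x^2 u(t)\|_{L_2(I)}^2 + K(\delta)\left(\int_{-1}^1 \zeta_1(x) |u(t,x)|\ \rd x\right)^2\ ,
\end{equation*}
and by Lemma~\ref{le.rc3} the weighted $L_1$-norm of $u(t)$ is bounded by a constant $M_0 := 2 + \bigl|\int_{-1}^1 \zeta_1 u^0\, \rd x\bigr|$ independent of $t$.

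The third step is to choose $\delta$ small enough that the $\|\partial_x^2 u(t)\|_{L_2(I)}^2$ contribution is absorbed into $\tfrac{1}{2}\mathcal{E}_b(u(t))$. Concretely, taking $\delta := \tfrac{\kappa_0\beta}{4\lambda\varepsilon^2}$ one obtains $\tfrac{\lambda\varepsilon^2\delta}{\kappa_0}\|\partial_x^2 u(t)\|_{L_2(I)}^2 = \tfrac{1}{2}\mathcal{E}_b(u(t))$, whence
\begin{equation*}
\lambda \mathcal{E}_e(u(t)) \le \frac{1}{2}\mathcal{E}_b(u(t)) + \frac{2\lambda}{\kappa_0} + \frac{\lambda\varepsilon^2}{\kappa_0} K(\delta)\, M_0^2 =: \frac{1}{2}\mathcal{E}_b(u(t)) + C_1(\kappa_0)\ .
\end{equation*}
Rearranging this with $\mathcal{E}_s(u(t))\ge 0$ yields \eqref{rc24}. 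The only real work is the bookkeeping in choosing $\delta$; all of the analytical ingredients have been prepared in Lemmas~\ref{le.rc3}--\ref{le.rc5}, so no genuine obstacle is anticipated.
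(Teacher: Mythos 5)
Your proposal is correct and follows essentially the same route as the paper: bound $\mathcal{E}_e(u(t))$ via Lemma~\ref{le.rc5} and \eqref{rc1}, absorb $\|\partial_x u(t)\|_{L_2(I)}^2$ using the Poincar\'e-type inequality of Lemma~\ref{le.rc4} with the same choice $\delta=\beta\kappa_0/(4\lambda\varepsilon^2)$, and control the weighted $L_1$-norm by Lemma~\ref{le.rc3}. The bookkeeping (including the identity $\lambda\varepsilon^2\delta/\kappa_0=\beta/4$) is accurate, so nothing further is needed.
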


\begin{proof}
It follows from \eqref{rc1}, Lemma~\ref{le.rc4}, and Lemma~\ref{le.rc5} that, for $\delta>0$, 
\begin{align*}
\mathcal{E}(u(t)) \ge\ & \mathcal{E}_b(u(t))+\mathcal{E}_s(u(t)) - \lambda \int_{-1}^1 \left( 1 + \varepsilon^2 |\partial_x u(t,x)|^2 \right)\ \frac{\mathrm{d}x}{1+u(t,x)} \\
\ge\ & \mathcal{E}_b(u(t))+\mathcal{E}_s(u(t)) - \frac{\lambda}{ \kappa_0} \left( 2 + \varepsilon^2 \|\partial_x u(t)\|_{L_2(I)}^2 \right) \\
\ge\ & \mathcal{E}_b(u(t))+\mathcal{E}_s(u(t)) - \frac{\lambda}{\kappa_0} \left[ 2 + \varepsilon^2 \delta \|\partial_x^2 u(t)\|_{L_2(I)}^2 + \varepsilon^2 K(\delta) \left(\int_{-1}^1 \zeta_1(x) |u(t,x)|\ \mathrm{d}x\right)^2 \right] \ .
\end{align*}
The lower bound \eqref{rc24} then follows from the above inequality with the choice $\delta = \beta \kappa_0 / (4\lambda\varepsilon^2)$ and \eqref{rc20}.
\end{proof}

The last auxiliary result is a control of the right hand side of \eqref{hyper1} involving only $\mathcal{E}_b+\mathcal{E}_s$ and the lower bound \eqref{rc1}.

\begin{lem}\label{le.rc7}
Given $\sigma\in [0,1/2)$, there is $C_2(\kappa_0,\sigma)>0$ such that, for $t\in [0,T_m)$,
\begin{equation}
\| g(u(t))\|_{H^\sigma(I)} \le C_2(\kappa_0,\sigma)\ \left( 1 + \|u(t)\|_{H^2(I)}^{44} \right)\ . \label{rc24b}
\end{equation}
\end{lem}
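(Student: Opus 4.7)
The strategy is to factor $g(u) = h_1(u)\, h_2(u)^2$, writing
$$ h_1(u) := \frac{1 + \varepsilon^2 (\partial_x u)^2}{(1+u)^2}\ , \qquad h_2(u) := \partial_\eta \phi_u(\cdot, 1)\ , $$
and to estimate each factor in a norm compatible with Sobolev multiplication on $H^\sigma(I)$ for $\sigma<1/2$. Since $H^2(I) \hookrightarrow W_\infty^1(I)$ and $u \ge -1 + \kappa_0$, applying the chain rule to $h_1(u)$ readily yields
$$ \|h_1(u)\|_{W_\infty^1(I)} \le C(\kappa_0)\,\bigl(1 + \|u\|_{H^2(I)}^{m_1}\bigr) $$
for some explicit small integer $m_1$.

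To handle $h_2(u)^2$, I would first revisit Proposition~\ref{L1} in order to extract an \emph{explicit} polynomial bound of the form
$$ \|\phi_u\|_{H^2(\Omega)} \le C(\kappa_0)\bigl(1+\|u\|_{H^2(I)}^{m_2}\bigr)\ . $$
The key observation is that the coefficients $a_{ij}, b_k$ of $\mathcal{L}_u$ in the divergence form~\eqref{Ldiv} involve only $u$ and $\partial_x u$, so the embedding $H^2(I) \hookrightarrow W_\infty^1(I)$ controls them in $L_\infty(\Omega)$ by a polynomial in $\|u\|_{H^2(I)}$, while the ellipticity constant is bounded from below by a power of $\kappa_0$. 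Standard $H^2$-elliptic regularity on the rectangle $\Omega$ with homogeneous Dirichlet boundary, applied to the shifted function $\phi_u - \eta$ which solves a problem with a right-hand side $f = \partial_x f_1 + f_2$ that is also bounded polynomially in $\|u\|_{H^2(I)}$, gives the above estimate. The trace theorem then yields $\|h_2(u)\|_{H^{1/2}(I)}$ controlled by the same expression.

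To upgrade this to an estimate for $\|h_2(u)^2\|_{H^\sigma(I)}$, I would use the bilinear inequality
$$ \|v^2\|_{H^\sigma(I)} \le C_\sigma\, \|v\|_{H^{1/2}(I)}\, \|v\|_{L_p(I)}\ , \qquad v\in H^{1/2}(I)\ , $$
valid for $\sigma < 1/2$ and $p$ chosen sufficiently large (depending on $\sigma$), which follows from the Slobodeckij characterization of $H^\sigma(I)$ together with the embedding $H^{1/2}(I) \hookrightarrow L_q(I)$ for all finite $q$. Combining this with the multiplier estimate $\|h_1 v\|_{H^\sigma(I)} \le C\,\|h_1\|_{W_\infty^1(I)}\,\|v\|_{H^\sigma(I)}$, which holds because $\sigma < 1/2$, produces the claimed polynomial bound for $\|g(u)\|_{H^\sigma(I)}$. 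The exponent $44$ then emerges as the sum of the polynomial degrees coming from the elliptic regularity estimate (doubled because of $h_2^2$), the squaring, and the computation of $\|h_1(u)\|_{W_\infty^1(I)}$.

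The main obstacle will be extracting an \emph{explicit polynomial} bound in $\|u\|_{H^2(I)}$ for $\|\phi_u\|_{H^2(\Omega)}$, since Proposition~\ref{L1} only provides a Lipschitz constant $c(\kappa)$ that is implicit in $\kappa$. This requires carefully tracking constants through the $H^2$-regularity estimate on the non-smooth rectangle $\Omega$ with the non-constant coefficient operator $\mathcal{L}_u$. A secondary subtlety is the bilinear estimate for the square of the trace, which must bypass the fact that $H^{1/2}(I) \not\hookrightarrow L_\infty(I)$ in one dimension.
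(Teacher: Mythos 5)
Your overall architecture (estimate the trace factor $\partial_\eta\phi_u(\cdot,1)$ squared in $H^\sigma$, multiply by the prefactor, and use Sobolev multiplication for $\sigma<1/2$) matches the paper's, but the central step of your plan has a genuine gap: the bound $\|\phi_u\|_{H^2(\Omega)}\le C(\kappa_0)\bigl(1+\|u\|_{H^2(I)}^{m_2}\bigr)$ cannot be obtained from ``standard $H^2$-elliptic regularity'' with constants depending only on the $L_\infty$-size of the coefficients and the ellipticity constant, because no such theory exists for variable coefficients. Quantitative $H^2$ (or $W_p^2$) estimates require Lipschitz, or at least quantitatively continuous, coefficients, and here the Lipschitz norms of $a_{12}$, $a_{22}$, $b_1$, $b_2$ involve $\partial_x^2 u$, which is controlled only in $L_2(I)$ by $\|u\|_{H^2(I)}$, not in $L_\infty$. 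Equivalently, if you try to differentiate the equation in $x$ to produce second derivatives, the commutator terms $\partial_x a_{ij}\,\nabla\Phi$ lie only in $L_{2-\epsilon}(\Omega)$ (an $L_2(I)$ function of $x$ times an $H^1(\Omega)$ gradient), so the estimate does not close in $H^2(\Omega)$. This is not a matter of ``carefully tracking constants''; it is precisely the obstruction the paper is built to avoid. The paper never bounds the full Hessian: it tests the transformed equation for $\Phi=\phi_u-\eta$ once with $\Phi$ (giving an energy bound growing like $\|u\|_{H^2}^2$) and once with the special multiplier $\partial_\eta^2\Phi$, handling the corner contributions of the rectangle via Grisvard's Lemmas~4.3.1.2--4.3.1.3 as in \cite[Lemma~11]{LaurencotWalker_ARMA}. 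This controls only the quantity $Q^2=\varepsilon^2\|\partial_x\partial_\eta\Phi-\eta U\partial_\eta^2\Phi\|_{L_2(\Omega)}^2+\|\partial_\eta^2\Phi/(1+u)\|_{L_2(\Omega)}^2$, i.e.\ the mixed and vertical second derivatives but \emph{not} $\partial_x^2\Phi$ --- and that is enough, since the trace bound for $\partial_\eta\Phi(\cdot,1)$ in $H^{1/2}(I)$ only needs $\|\partial_\eta\Phi\|_{H^1(\Omega)}$. The polynomial growth in $\|u\|_{H^2}$ then comes from an interpolation between $L_2(\Omega)$ and $H^1(\Omega)$ for $\partial_\eta\Phi$ followed by absorption of $Q^{2\vartheta}$ via Young's inequality; without such a self-improving argument your exponent-counting has nothing to count.

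A secondary (fixable) error: your prefactor estimate $\|h_1(u)\|_{W_\infty^1(I)}\le C(\kappa_0)(1+\|u\|_{H^2}^{m_1})$ is false, since $\partial_x h_1(u)$ contains $\partial_x u\,\partial_x^2 u$, which is only in $L_2(I)$ when $u\in H^2(I)$; thus $h_1(u)\in H^1(I)$ but not $W_\infty^1(I)$. The paper accordingly uses the multiplication $H^1(I)\cdot H^\sigma(I)\hookrightarrow H^\sigma(I)$ together with $\|h_1(u)\|_{H^1(I)}\le C\kappa_0^{-3}(1+\|u\|_{H^2(I)}^3)$; your bilinear trick for the square of the trace is fine and essentially equivalent to the paper's use of $H^{1/2}\cdot H^{1/2}\hookrightarrow H^\sigma$, $\sigma<1/2$. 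Both slips share the same root cause: only $\partial_x u$, not $\partial_x^2 u$, is in $L_\infty$ under an $H^2(I)$ bound, and the entire lemma must be proved using no more than that.
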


\begin{proof}
We set 
$$
U(t,x) := \frac{\partial_x u(t,x)}{1+u(t,x)}  \;\;\text{ and }\;\; \Phi(t,x,\eta) := \phi(t,x,\eta)-\eta
$$ 
for $(t,x,\eta)\in [0,T_m)\times\Omega$, the function $\phi$ and the variable $\eta$ being defined in \eqref{23}-\eqref{24} and \eqref{Tuu}, respectively. Then $\Phi(t)$ solves
\begin{align}
- \big( \mathcal{L}_{u(t)} \Phi(t) \big)(x,\eta) & = f(t,x,\eta) := \varepsilon^2 \eta \left[ U(t,x)^2 - \partial_x U(t,x) \right] \;\;\text{ in }\;\;\Omega\ , \label{rc25} \\
\Phi(t,x,\eta) & = 0 \;\;\text{ on }\;\;\partial\Omega\ .
 \label{rc26} 
\end{align}
From now on, the time $t$ plays no particular role anymore and is thus omitted in the notation. We multiply \eqref{rc25} by $\Phi$, integrate over $\Omega$, and use Green's formula to obtain
\begin{align*}
P^2 := \varepsilon^2 \| \partial_x\Phi - \eta\ U\ \partial_\eta\Phi \|_{L_2(\Omega)}^2 + \left\| \frac{\partial_\eta\Phi}{1+u} \right\| _{L_2(\Omega)}^2 =\ & \varepsilon^2 \int_\Omega U \Phi \left( \partial_x\Phi - \eta\ U\ \partial_\eta\Phi \right) \ \mathrm{d} (x,\eta) \\
& + \int_\Omega f \Phi\ \mathrm{d}(x,\eta)\ .
\end{align*}
Observing that 
\begin{align*}
\int_\Omega f \Phi\ \mathrm{d}(x,\eta) =\ & \varepsilon^2 \int_\Omega \eta U^2 \Phi \ \mathrm{d}(x,\eta) + \varepsilon^2 \int_\Omega \eta U \partial_x \Phi\ \mathrm{d}(x,\eta) \\
=\ & \varepsilon^2 \int_\Omega \eta U^2 \Phi\ \mathrm{d}(x,\eta) + \varepsilon^2 \int_\Omega \eta^2 U^2 \partial_\eta\Phi\ \mathrm{d} (x,\eta)\\
& + \varepsilon^2 \int_\Omega \eta U \left( \partial_x \Phi - \eta\ U\ \partial_\eta \Phi \right)\ \mathrm{d}(x,\eta) \\
=\ & - \varepsilon^2 \int_\Omega \eta U^2 \Phi\ \mathrm{d}(x,\eta) \\
& + \varepsilon^2 \int_\Omega \eta U \left( \partial_x \Phi - \eta\ U\ \partial_\eta \Phi \right)\ \mathrm{d}(x,\eta) \ ,
\end{align*}
we end up with
\begin{align*}
P^2 =\ & \varepsilon^2 \int_\Omega U \left( \Phi + \eta \right) \left( \partial_x\Phi - \eta\ U\ \partial_\eta\Phi \right) \ \mathrm{d} (x,\eta) \\
& + \varepsilon^2 \int_\Omega \eta^2 U^2  \ \mathrm{d} (x,\eta) - \varepsilon^2 \int_\Omega \eta U^2 \left( \Phi + \eta \right)\ \mathrm{d}(x,\eta)\ .
\end{align*}
Since $0\le \phi = \Phi+\eta \le 1$ by the comparison principle and $\eta\in (0,1)$, we infer from Young's inequality that  
\begin{align*}
P^2 \le\ & \frac{\varepsilon^2}{2} \left[ \| U\|_{L_2(I)}^2 + \| \partial_x\Phi - \eta\ U\ \partial_\eta\Phi \|_{L_2(\Omega)}^2 \right] + \varepsilon^2 \| U\|_{L_2(I)}^2\ ,
\end{align*}
hence
\begin{equation}
\varepsilon^2 \| \partial_x\Phi - \eta\ U\ \partial_\eta\Phi \|_{L_2(\Omega)}^2 + \left\| \frac{\partial_\eta\Phi}{1+u} \right\| _{L_2(\Omega)}^2 \le 3 \varepsilon^2 \| U\|_{L_2(I)}^2\ . \label{rc27}
\end{equation}

Next, we multiply \eqref{rc25} (in non-divergence form) by $\zeta := \partial_\eta^2 \Phi$ and integrate over $\Omega$. We proceed as in \cite[Lemma~11]{LaurencotWalker_ARMA}\footnote{There is a sign misprint in the proof of \cite[Lemma~11]{LaurencotWalker_ARMA}.} with the help of \cite[Lemma~4.3.1.2 $\&$~4.3.1.3]{Grisvard} to deduce, with the notation $\omega := \partial_x \partial_\eta \Phi$, that
\begin{equation*}
Q^2 := \varepsilon^2\ \| \omega - \eta\ U\ \zeta \|_{L_2(\Omega)}^2 + 
\left\| \frac{\zeta}{1+u} \right\|_{L_2(\Omega)}^2 = - \int_\Omega f\ \left( 1+ \partial_\eta\Phi \right)\ \partial_\eta^2 \Phi\ \mathrm{d} (x,\eta) \ . 
\end{equation*}
We next use Green's formula, the boundary conditions \eqref{rc26} with their consequence \mbox{$\partial_\eta \Phi(\pm 1,\eta)=0$}, and the definition of $f$ to find
\begin{align}
Q^2 =\ & \frac{\varepsilon^2}{2} \int_\Omega \left( U^2 - \partial_x U \right) \left( 2 \partial_\eta \Phi + (\partial_\eta \Phi)^2 \right)\ \mathrm{d} (x,\eta) \nonumber \\
& - \frac{\varepsilon^2}{2} \int_{-1}^1 \left( U^2 - \partial_x U \right) \left( 2 \partial_\eta \Phi(\cdot, 1) + (\partial_\eta \Phi(\cdot, 1))^2 \right)\ \mathrm{d} x \nonumber \\
=\ & \frac{\varepsilon^2}{2} \int_\Omega \left( U^2 - \partial_x U \right) (\partial_\eta \Phi)^2\ \mathrm{d} (x,\eta) \nonumber \\
& -\frac{\varepsilon^2}{2} \int_{-1}^1 \left( U^2 - \partial_x U \right) \left( 2 \partial_\eta \Phi(\cdot, 1) + (\partial_\eta \Phi(\cdot, 1))^2 \right)\ \mathrm{d} x \nonumber \\
=\ & \frac{\varepsilon^2}{2} \int_\Omega U^2 (\partial_\eta \Phi)^2\ \mathrm{d} (x,\eta) + \varepsilon^2 \int_\Omega U   \omega \partial_\eta \Phi\ \mathrm{d} (x,\eta) \nonumber \\
& -\frac{\varepsilon^2}{2} \int_{-1}^1 \left( U^2 - \partial_x U \right) \left( 2 \partial_\eta \Phi(\cdot, 1) + (\partial_\eta \Phi(\cdot, 1))^2 \right)\ \mathrm{d} x \label{rc28}\ .
\end{align}
Observing that
\begin{align*}
\int_\Omega U \omega \partial_\eta \Phi\ \mathrm{d} (x,\eta) = & \int_\Omega U  \left( \omega - \eta U \zeta \right) \partial_\eta \Phi\ \mathrm{d} (x,\eta) + \int_\Omega \eta U^2    \partial_\eta \Phi \partial_\eta^2\Phi\ \mathrm{d} (x,\eta) \\
= & \int_\Omega U  \left( \omega - \eta U \zeta \right) \partial_\eta \Phi\ \mathrm{d} (x,\eta) - \frac{1}{2} \int_\Omega U^2 (\partial_\eta\Phi)^2\ \mathrm{d} (x,\eta) \\
&  + \frac{1}{2} \int_{-1}^1 U^2 (\partial_\eta\Phi(\cdot, 1))^2\ \mathrm{d} x
\end{align*}
by Green's formula, we combine the above inequality with \eqref{rc28} and use Cauchy-Schwarz and Young's inequalities to obtain
\begin{align*}
Q^2 & \le  \varepsilon^2 \int_\Omega U  \left( \omega - \eta U \zeta \right) \partial_\eta \Phi\ \mathrm{d} (x,\eta) - \varepsilon^2 \int_{-1}^1 U^2 \partial_\eta \Phi(\cdot, 1)\ \mathrm{d} x \\
&\quad + \frac{\varepsilon^2}{2} \int_{-1}^1 \partial_x U \left( 2 \partial_\eta \Phi(\cdot, 1) + (\partial_\eta \Phi(\cdot, 1))^2 \right)\ \mathrm{d} x \\
&\le  \frac{\varepsilon^2}{2} \|\omega - \eta U \zeta \|_{L_2(\Omega)}^2 + \frac{\varepsilon^2}{2} \| \partial_x u \|_{L_\infty(I)}^2 \left\| \frac{\partial_\eta \Phi}{1+u} \right\|_{L_2(\Omega)}^2 + \varepsilon^2 \|U\|_{L_4(I)}^2 \|\partial_\eta \Phi(\cdot, 1)\|_{L_2(I)} \\
&\quad + \varepsilon^2 \|\partial_x U\|_{L_2(I)}\ \left( 1 + \| \partial_\eta \Phi(\cdot, 1)\|_{L_4(I)}^2 \right)\ .
\end{align*}
Owing to the continuous embedding of $H^2(I)$ in $W_\infty^1(I)$ and in $W_4^1(I)$, \eqref{rc1}, and \eqref{rc27}, we deduce that there is a constant $C(\kappa_0)>0$ such that
\begin{equation}
Q^2 \le C(\kappa_0) \left( 1 + \|u\|_{H^2(I)}^4 \right) \left( 1 + \| \partial_\eta \Phi(\cdot, 1)\|_{L_4(I)}^2 \right)\ . \label{rc29}
\end{equation}
Now, given $\vartheta\in (3/4,1)$, we infer from the continuous embedding of $H^{1/4}(I)$ in $L_4(I)$ and the continuity of the trace operator from $H^\vartheta(\Omega)$ in $H^{1/4}(\partial\Omega)$ (see \cite[Theorem~1.5.1.2]{Grisvard}) that
$$
Q^2 \le C(\kappa_0) \left( 1 + \|u\|_{H^2(I)}^4 \right) \left( 1 + \| \partial_\eta \Phi\|_{H^\vartheta(\Omega)}^2 \right)\ .
$$
A classical interpolation inequality, the continuous embedding of $H^2(I)$ in $W_\infty^1(I)$, \eqref{rc1}, and \eqref{rc27} give
\begin{align*}
Q^2 \le\ & C(\kappa_0) \left( 1 + \|u\|_{H^2(I)}^4 \right) \left( 1 + \| \partial_\eta \Phi\|_{L_2(\Omega)}^{2(1-\vartheta)} \| \partial_\eta \Phi\|_{H^1(\Omega)}^{2\vartheta} \right) \\
\le\ & C(\kappa_0) \left( 1 + \|u\|_{H^2(I)}^4 \right) \left[ 1 + \| \partial_\eta \Phi\|_{L_2(\Omega)}^2 + \| \partial_\eta \Phi\|_{L_2(\Omega)}^{2(1-\vartheta)} \left( \|\omega\|_{L_2(\Omega)}^{2\vartheta} + \| \zeta\|_{L_2(\Omega)}^{2\vartheta} \right) \right] \\
\le\ & C(\kappa_0) \left( 1 + \|u\|_{H^2(I)}^4 \right) \left( 1 + \|1+u\|_{L_\infty(I)}^2 \left\| \frac{\partial_\eta \Phi}{1+u} \right\|_{L_2(\Omega)}^2 \right) \\
& + C(\kappa_0) \left( 1 + \|u\|_{H^2(I)}^4 \right) \|1+u\|_{L_\infty(I)}^{2(1-\vartheta)} \left\| \frac{\partial_\eta \Phi}{1+u} \right\|_{L_2(\Omega)}^{2(1-\vartheta)} \left( \|\omega\|_{L_2(\Omega)}^{2\vartheta} + \| \zeta\|_{L_2(\Omega)}^{2\vartheta} \right) \\
\le\ & C(\kappa_0) \left( 1 + \|u\|_{H^2(I)}^8 \right) \left( 1 + \|\omega\|_{L_2(\Omega)}^{2\vartheta} + \| \zeta\|_{L_2(\Omega)}^{2\vartheta} \right) \\
\le\ & C(\kappa_0) \left( 1 + \|u\|_{H^2(I)}^{8+2\vartheta} \right) \left( 1 + \|\omega - \eta U \zeta \|_{L_2(\Omega)}^{2\vartheta} + \left\| \frac{\zeta}{1+u} \right\|_{L_2(\Omega)}^{2\vartheta} \right) \\
\le \ & C(\kappa_0) \left( 1 + \|u\|_{H^2(I)}^{8+2\vartheta} \right) \left( 1 + Q^{2\vartheta} \right)\ .
\end{align*}
Since $\vartheta\in (3/4,1)$, Young's inequality gives
\begin{equation}
Q^2 \le C(\kappa_0) \left( 1 + \|u\|_{H^2(I)}^{(8+2\vartheta)/(1-\vartheta)} \right)\ . \label{rc30}
\end{equation}
Using again the continuous embedding of $H^2(I)$ in $W_\infty^1(I)$, \eqref{rc1}, and \eqref{rc27}, together with \cite[Chapter~2, Theorem~5.4]{Necas67}, we find
\begin{align*}
\|\partial_\eta\Phi(\cdot, 1) \|_{H^{1/2}(I)}^2 &\le  C \|\partial_\eta\Phi\|_{H^1(\Omega)}^2 = C \left(\|\partial_\eta\Phi\|_{L_2(\Omega)}^2 + \|\omega\|_{L_2(\Omega)}^2 + \|\zeta\|_{L_2(\Omega)}^2\right) \\
&\le  C \|1+u\|_{L_\infty(I)}^2 \left\| \frac{\partial_\eta \Phi}{1+u} \right\|_{L_2(\Omega)}^2 + 2C \|\omega - \eta U \zeta \|_{L_2(\Omega)}^2 \\
&\quad + 2C \|\partial_x u\|_{L_\infty(I)}^2 \left\| \frac{\zeta}{1+u} \right\|_{L_2(\Omega)}^2 + C \|1+u\|_{L_\infty(I)}^2 \left\| \frac{\zeta}{1+u} \right\|_{L_2(\Omega)}^2 \\
&\le  C \left( 1 + \|u\|_{H^2(I)}^2 \right) Q^2\ .
\end{align*}
Combining this last inequality with \eqref{rc30} and the continuity of the pointwise multiplication 
$$
H^{1/2}(I) \cdot H^{1/2}(I) \hookrightarrow H^{\sigma}(I)\ , \quad \sigma\in [0,1/2)\ ,
$$
see \cite[Theorem~2.1 \&~Remark~4.2(d)]{AmannMultiplication}, leads us to 
\begin{equation}
\left\| \left( \partial_\eta\Phi(\cdot, 1) \right)^2 \right\|_{H^{\sigma}(I)} \le  C(\kappa_0,\sigma)\ \left( 1 + \|u\|_{H^2(I)}^{10/(1-\vartheta)} \right)\ , \quad \sigma\in [0,1/2) \ . \label{rc31}
\end{equation}
Finally, let  $\sigma \in [0,1/2)$. It follows from \eqref{rc1}, \eqref{rc31}, the continuous embedding of $H^2(I)$ in $W_\infty^1(I)$, and continuity of pointwise multiplication $H^1(I)\cdot H^\sigma(I)\hookrightarrow H^\sigma(I)$ that
\begin{align*}
\| g(u)\|_{H^\sigma(I)} &\le  C \, \left\| \frac{1 + \varepsilon^2 (\partial_x u)^2}{(1+u)^2} \right\|_{H^1(I)}  \left\| \left( \partial_\eta\Phi(\cdot, 1) \right)^2 \right\|_{H^\sigma(I)} \\
&\le  \frac{C}{\kappa_0^3} \left( 1 + \|u\|_{H^2(I)}^{3} \right) \left\| \left( \partial_\eta\Phi(\cdot, 1) \right)^2 \right\|_{H^{\sigma}(I)} \\
&\le  C(\kappa_0) \left( 1 + \|u\|_{H^2(I)}^{(13-3\vartheta)/(1-\vartheta)} \right)\ ,
\end{align*}
and thus \eqref{rc24b} after choosing $\vartheta\in (3/4,1)$ accordingly.
\end{proof}

\begin{proof}[Proof of Proposition~\ref{pr.rc1}]
We combine the energy identity \eqref{rc2} and \eqref{rc24} to obtain
$$
\frac{1}{2} \left[ \mathcal{E}_b(u(t)) + \mathcal{E}_s(u(t)) \right] - C_1(\kappa_0) \le \mathcal{E}(u(t))
\le \mathcal{E}(u^0)\ , \qquad t\in [0,T_0]\cap [0,T_m)\ ,
$$
hence, thanks to Poincar\'e's inequality,
$$
\|u(t)\|_{H^2(I)} \le C(\kappa_0)\ , \qquad t\in [0,T_0]\cap [0,T_m)\ .
$$
This last bound and \eqref{rc24b} then ensure that
$$
\|g(u(t))\|_{H^\sigma(I)} \le C(\kappa_0)\ , \qquad t\in [0,T_0]\cap [0,T_m)\ ,
$$
with $\sigma\in [0,1/2)$.
Now fix $T_1\in (0, T_m)\cap (0,T_0)$. Classical parabolic regularity results for \eqref{CP} entail that
\begin{equation}
\|u(t)\|_{H^4(I)} \le C(\kappa_0, T_1)\ , \qquad t\in [T_1,T_0]\cap [T_1,T_m)\ , \label{rc100}
\end{equation}
 which, together with the assumption \eqref{rc1} prevents the occurrence of a singularity in $[T_1,T_0]$. Consequently, $T_m\ge T_0$.
\end{proof}

Combining now Proposition~\ref{pr.rc1} and Proposition~\ref{Aglobal}~(ii) we obtain the following criterion for global existence. It also implies part~(ii) of Theorem~\ref{Alin} by taking $\xi=1$.

\begin{cor}\label{c111}
Let $\gamma=0$. Given $4\xi\in (2,4]$ and an initial condition $u^0\in  H_\B^{4\xi}(I)$ such that $u^0>-1$ on~$I$, let $(u,\psi)$ be the solution to \eqref{hyper1}-\eqref{psibc} on the maximal interval of existence $[0,T_m)$. If, for each $T>0$, there is $\kappa(T)\in (0,1)$ such that
$$ 
u(t)\ge -1+\kappa(T)\ \text{ on } I\ ,\quad t\in [0,T_m)\cap [0,T]\ ,
$$ 
then  $T_m=\infty$.
\end{cor}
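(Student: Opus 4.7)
The plan is to show that, under the standing lower-bound hypothesis on $u$, the criterion for global existence from Proposition~\ref{Aglobal}~(ii) is automatically satisfied, because the missing $H^{4\xi}(I)$-estimate is supplied by Proposition~\ref{pr.rc1}. In other words, Proposition~\ref{pr.rc1} is precisely the bridge that converts the pure touchdown-type assumption of Corollary~\ref{c111} into the two-sided control required by Proposition~\ref{Aglobal}~(ii).

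More concretely, fix an arbitrary $T>0$ and let $\kappa(T)\in (0,1)$ be the constant provided by the hypothesis, so that
\[
u(t,x) \ge -1 + \kappa(T) \ , \qquad (t,x) \in \big([0,T_m) \cap [0,T]\big)\times I \ .
\]
I would then apply Proposition~\ref{pr.rc1} with the choice $T_0 := T$ and $\kappa_0 := \kappa(T)$. This proposition yields on the one hand that $T_m \ge T$, and on the other hand a constant $c_T := c(\kappa(T),T)>0$ such that
\[
\|u(t)\|_{H^{4\xi}(I)} \le c_T \ , \qquad t \in [0,T] \ .
\]
Setting $\tilde\kappa(T) := \min\{\kappa(T), 1/(c_T+1)\} \in (0,1)$, both inequalities
\[
\|u(t)\|_{H^{4\xi}(I)} \le \frac{1}{\tilde\kappa(T)} \ , \qquad u(t) \ge -1 + \tilde\kappa(T) \ \text{ in } I \ ,
\]
are valid for every $t \in [0,T_m)\cap [0,T]$. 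Since $T>0$ is arbitrary, this verifies the hypothesis of Proposition~\ref{Aglobal}~(ii), and that proposition immediately gives $T_m = \infty$.

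There is no substantive obstacle here: the argument is a bookkeeping step that assembles Proposition~\ref{pr.rc1} (the real content, whose proof relied on the energy identity, the weighted $L_1$-estimate from Lemma~\ref{le.rc3}, and the parabolic regularity bootstrap of \eqref{rc100}) with the extension criterion of Proposition~\ref{Aglobal}~(ii). The special case $\xi = 1$ of the resulting statement is exactly part~(ii) of Theorem~\ref{Alin}, since $H^4_D(I)$-initial data fall under the scope of Proposition~\ref{Aglobal} with $\xi=1$.
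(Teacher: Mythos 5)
Your proposal is correct and follows exactly the route the paper takes: Corollary~\ref{c111} is obtained by combining Proposition~\ref{pr.rc1} (which converts the lower bound on $u$ into the $H^{4\xi}(I)$-bound and $T_m\ge T$) with the extension criterion of Proposition~\ref{Aglobal}~(ii). The only cosmetic remark is that applying Proposition~\ref{pr.rc1} with $T_0=T$ for every $T>0$ already yields $T_m\ge T$ for all $T$, hence $T_m=\infty$, so your final invocation of Proposition~\ref{Aglobal}~(ii) is a harmless redundancy rather than a gap.
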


\medskip

\subsection{Hyperbolic Case: $\gamma>0$} \label{rcge2}

We now prove the counterpart of Proposition~\ref{pr.rc1} in the hyperbolic case $\gamma>0$. For this, however, we require $\gamma$ to be sufficiently small, the reason for this additional constraint will become clear in the proof of Lemma~\ref{le.rc3.b}.

Let $\mu_1>0$ be the positive eigenvalue of the operator $\beta\partial_x^4 - \tau \partial_x^2$ with clamped boundary conditions and let $\zeta_1\in H_{D}^4(I)$ be the corresponding positive eigenfunction satisfying $\|\zeta_1\|_{L_1(I)}=~1$ already introduced at the beginning of Section~\ref{rcge1}.

\begin{prop}\label{pr.rc1.b}
Let $\gamma^2\in (0,1/4\mu_1]$. Given $2\alpha\in (0,1/2)$ and an initial condition $(u^0,u^1)\in H_D^{4+2\alpha}(I)\times H_D^{2+2\alpha}(I)$  such that $u^0> -1$ in~$I$, let $(u,\psi)$ be the corresponding solution to \eqref{hyper1}-\eqref{psibc} defined on $[0,T_m)$. If there are $T_0>0$ and $\kappa_0\in (0,1)$ such that
\begin{equation}
u(t,x) \ge -1 + \kappa_0\ , \qquad x \in I\ , \ \ t\in [0,T_0]\cap [0,T_m)\ , \label{rc1b}
\end{equation}
then $T_m\ge T_0$ and
$$
\|u(t)\|_{H^{2+2\alpha}(I)}+\|\partial_tu(t)\|_{H^{2\alpha}(I)}\le c(\kappa_0,T_0)\ ,\quad t\in [0,T_0]\ . 
$$
\end{prop}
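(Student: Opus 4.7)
The plan is to mirror the parabolic strategy of Section~\ref{rcge1}, replacing Lemma~\ref{le.rc3} by a weighted $L_1$-estimate derived from a damped linear ODE (which is where the smallness condition on $\gamma$ enters) and the final parabolic regularity step by a Duhamel bootstrap along the group $e^{-t\mathbb{A}_\alpha}$. Throughout, $(u,\psi)$ denotes the solution to \eqref{hyper1}-\eqref{psibc} satisfying \eqref{rc1b}.

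Testing \eqref{hyper1} against the positive eigenfunction $\zeta_1\in H_D^4(I)$ associated with the positive eigenvalue $\mu_1$ of $\beta\partial_x^4 - \tau\partial_x^2$, and using the non-positivity of the right hand side of \eqref{hyper1}, the function $y(t) := \int_{-1}^1 \zeta_1(x) u(t,x)\,\rd x$ satisfies the damped ODE inequality
$$
\gamma^2 y''(t) + y'(t) + \mu_1 y(t) \le 0\ ,\qquad y(0)=\int_{-1}^1 \zeta_1 u^0\,\rd x\ ,\quad y'(0)=\int_{-1}^1 \zeta_1 u^1\,\rd x\ .
$$
Under the assumption $\gamma^2 \le 1/(4\mu_1)$, the characteristic roots $r_\pm$ of $\gamma^2 r^2 + r + \mu_1$ are real and strictly negative (coinciding in the critical case), so the fundamental solution $K(t)$ of the ODE with vanishing initial data is non-negative on $[0,\infty)$. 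A standard comparison argument then yields $y(t) \le Y(t)$ on $[0,T_m)$, where $Y$ is the solution of the associated homogeneous equation with the same initial data and decays exponentially; in particular $Y$ is uniformly bounded. Combined with the trivial lower bound $y(t) \ge -\|\zeta_1\|_{L_1(I)} = -1$ stemming from $u>-1$, this produces the analogue of Lemma~\ref{le.rc3}:
$$
\int_{-1}^1 \zeta_1(x)\,|u(t,x)|\,\rd x \le C(u^0,u^1,\gamma)\ ,\quad t\in[0,T_m)\ .
$$

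Since the proofs of Lemmas~\ref{le.rc4}, \ref{le.rc5}, \ref{le.rc6}, and~\ref{le.rc7} depend only on the pointwise lower bound \eqref{rc1b} and on the above weighted $L_1$-estimate, they carry over verbatim, producing in particular
$$
\mathcal{E}(u(t)) \ge \tfrac{1}{2}\big(\mathcal{E}_b(u(t))+\mathcal{E}_s(u(t))\big) - C_1(\kappa_0,u^0,u^1,\gamma)\quad \text{on }\ [0,T_0]\cap[0,T_m)\ .
$$
Plugging this lower bound into the energy identity \eqref{rc2b} and invoking Poincar\'e's inequality yields a uniform bound on $\|u(t)\|_{H^2(I)} + \gamma\|\partial_t u(t)\|_{L_2(I)}$ on $[0,T_0]\cap[0,T_m)$. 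Choosing $\sigma = 2\alpha \in (0,1/2)$ in Lemma~\ref{le.rc7} then gives a uniform bound on $\|g(u(t))\|_{H^{2\alpha}(I)}$ on the same interval.

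To conclude, set $\mathbf{u}=(u,\partial_t u)$ and $f(\mathbf{u})=(0,-g(u))$, so that $\|f(\mathbf{u})\|_{\mathbb{H}_\alpha} = \|g(u)\|_{H^{2\alpha}(I)}$ by \eqref{Ha}. The mild formulation
$$
\mathbf{u}(t) = e^{-t\mathbb{A}_\alpha}(u^0,u^1) + \lambda \int_0^t e^{-(t-s)\mathbb{A}_\alpha} f(\mathbf{u}(s))\,\rd s\ ,
$$
together with the exponential decay estimate \eqref{expdec}, yields the desired bound $\|u(t)\|_{H^{2+2\alpha}(I)} + \|\partial_t u(t)\|_{H^{2\alpha}(I)} \le c(\kappa_0,T_0)$ on $[0,T_0]\cap[0,T_m)$, and the blow-up alternative of Proposition~\ref{ThyperIntroduction}~(ii) then enforces $T_m \ge T_0$. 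The main obstacle is the first step: the non-negativity of $K$ fails as soon as $\gamma^2 > 1/(4\mu_1)$, because the characteristic roots become complex and $K$ oscillates, so the sign information $-\lambda g(u) \le 0$ can no longer be propagated to $y$. This is precisely why the threshold $\gamma^2 \le 1/(4\mu_1)$ appears unavoidable in the present approach.
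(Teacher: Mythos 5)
Your proposal is correct and follows essentially the same route as the paper: the weighted $L_1$-bound via the damped ODE for $\int_{-1}^1 \zeta_1 u\,\rd x$ (the paper writes out the variation-of-constants formula with its non-negative $\sinh$-type kernel, which is exactly your non-negative fundamental solution $K$, and makes the same observation that complex roots for $\gamma^2>1/(4\mu_1)$ destroy the sign argument), the verbatim reuse of Lemmas~\ref{le.rc5}--\ref{le.rc7}, the energy identity \eqref{rc2b} combined with \eqref{rc24} and Poincar\'e for the $H^2$-bound, and the Duhamel estimate in $\mathbb{H}_\alpha$ via \eqref{expdec} together with the blow-up alternative of Proposition~\ref{ThyperIntroduction}~(ii). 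Only a small wording point: what converts the upper bound on $y(t)$ into a bound on $\int_{-1}^1\zeta_1|u|\,\rd x$ is the pointwise bound $u>-1$ (so $(-u)_+\le 1$), as in Lemma~\ref{le.rc3}, rather than the lower bound on $y$ itself.
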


For the remainder of this subsection, $(u,\psi)$ is the solution to \eqref{hyper1}-\eqref{psibc} satisfying \eqref{rc1b}. 

\medskip

We first need the analogue of Lemma~\ref{le.rc3}.

\begin{lem}\label{le.rc3.b}
There is a constant $c_0>0$, depending only on $(u^0,u^1)$ and $\gamma$, such that
\begin{equation}
\int_{-1}^1 \zeta_1(x) |u(t,x)|\ \mathrm{d}x \le c_0\ ,\quad t\in [0,T_m)\ . \label{rc20.b}
\end{equation}
\end{lem}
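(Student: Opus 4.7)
The plan is to mimic the argument used in Lemma~\ref{le.rc3} for the parabolic case, but where we previously obtained a first-order differential inequality for $y(t) := \int_{-1}^1 \zeta_1(x) u(t,x)\, \mathrm{d}x$, we now obtain a second-order one, and the constraint $\gamma^2 \le 1/(4\mu_1)$ enters precisely when inverting it.

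First I would multiply \eqref{hyper1} by $\zeta_1$, integrate over $I$, and use the facts that $\beta \partial_x^4 \zeta_1 - \tau \partial_x^2 \zeta_1 = \mu_1 \zeta_1$ with $\zeta_1(\pm 1) = \partial_x \zeta_1(\pm 1) = 0$, that $u(t,\pm 1) = \partial_x u(t,\pm 1) = 0$, and that the right-hand side of \eqref{hyper1} is non-positive. This yields
\begin{equation*}
\gamma^2 y''(t) + y'(t) + \mu_1 y(t) = h(t) \le 0, \quad t \in [0,T_m),
\end{equation*}
with $y(0) = \int \zeta_1 u^0\,\mathrm{d}x$ and $y'(0) = \int \zeta_1 u^1\,\mathrm{d}x$. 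The regularity provided by Proposition~\ref{ThyperIntroduction}~(i) (in particular $\partial_t^2 u \in L_1(0,T;L_2(I))$) is enough to interpret this identity in $L_1(0,T)$ and hence obtain the inequality after a regularization if needed.

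Next I would solve this ODE via Duhamel's formula. The characteristic polynomial $\gamma^2 r^2 + r + \mu_1 = 0$ has discriminant $1 - 4\gamma^2\mu_1 \ge 0$ precisely under our hypothesis $\gamma^2 \le 1/(4\mu_1)$; its roots
\begin{equation*}
r_\pm = \frac{-1 \pm \sqrt{1 - 4\gamma^2\mu_1}}{2\gamma^2}
\end{equation*}
are therefore real and strictly negative (with $r_+ = r_- < 0$ in the equality case). Writing $Y$ for the solution of the homogeneous equation $\gamma^2 Y'' + Y' + \mu_1 Y = 0$ with initial data $(y(0), y'(0))$, the variation-of-constants formula gives
\begin{equation*}
y(t) = Y(t) + \int_0^t G(t-s)\, h(s)\, \mathrm{d}s,
\end{equation*}
where the Green's function $G(\tau) = (e^{r_+\tau} - e^{r_-\tau})/(\gamma^2(r_+ - r_-))$ when $r_+ \ne r_-$, and $G(\tau) = \tau e^{r_+\tau}/\gamma^2$ when $r_+ = r_-$. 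In both cases $G(\tau) > 0$ for $\tau > 0$, so $h \le 0$ yields $y(t) \le Y(t)$. Since $r_\pm < 0$, the function $Y$ is a bounded linear combination of decaying exponentials, whence $y(t) \le C(\gamma, u^0, u^1)$ for all $t \in [0,T_m)$.

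Finally, exactly as at the end of the proof of Lemma~\ref{le.rc3}, the pointwise bound $u > -1$ gives $|u| = u + 2(-u)_+ \le u + 2$, and since $\|\zeta_1\|_{L_1(I)} = 1$ we deduce
\begin{equation*}
\int_{-1}^1 \zeta_1 |u(t)|\, \mathrm{d}x \le y(t) + 2 \le C(\gamma, u^0, u^1) + 2,
\end{equation*}
which is the desired estimate. The main subtlety, and the source of the smallness restriction on $\gamma$, is that positivity of the Green's function $G$ is what lets us turn the differential inequality into the pointwise upper bound $y(t) \le Y(t)$; if $1 - 4\gamma^2\mu_1 < 0$, $G$ would be oscillatory and sign-changing, and this step would fail. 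This is consistent with the physical intuition stated in the introduction that oscillations induced by inertia for large $\gamma$ may spoil the a priori control of $u$ from above.
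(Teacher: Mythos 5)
Your proposal is correct and follows essentially the same route as the paper's proof: the paper likewise derives the ODE $\gamma^2 X'' + X' + \mu_1 X = -b$ with $b\ge 0$ for $X(t)=\int_{-1}^1 \zeta_1 u(t)\,\mathrm{d}x$, writes the explicit solution (its $e^{-\tau/2\gamma^2}\sinh$ kernel, respectively $\tau e^{\sigma_1\tau}$ in the double-root case, is exactly your positive Green's function $G$), and uses $\sigma_{\pm1}<0$ together with the sign of the forcing to get an upper bound before finishing as in Lemma~\ref{le.rc3}. Your remark about the failure for $\gamma^2>1/4\mu_1$ matches the paper's comment following the proof.
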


\begin{proof}
Setting for $t\in [0,T_m)$
$$
X(t):=\int_{-1}^1 \zeta_1(x) u(t,x) \mathrm{d}x \ ,\quad b(t):=\lambda\int_{-1}^1 g(u(t))\,\zeta_1\,\rd x\ge 0\ ,
$$
it follows from \eqref{hyper1} that $X$ solves the ordinary differential equation
$$
\gamma^2\frac{\rd^2 X}{\rd t^2}+\frac{\rd X}{\rd t} +\mu_1 X=-b(t)\ ,\quad t\in [0,T_m)\ .
$$
First suppose that $\gamma^2\in (0,1/4\mu_1)$ and put
$$
\sigma_{\pm 1}:=\frac{-1\pm\sqrt{1-4\gamma^2\mu_1}}{2\gamma^2}\ .
$$
Then $X$ is given by
$$
X(t)=a_1 e^{t\sigma_{1}}+a_{-1}e^{t\sigma_{-1}}-\frac{2}{\sqrt{1-4\gamma^2\mu_1}}\int_0^t b(s)\, e^{-(t-s)/2\gamma^2}\,\sinh \left(\frac{\sqrt{1-4\gamma^2\mu_1}}{2\gamma^2} (t-s)\right)\,\rd s
$$
for some $a_{\pm 1} \in\R$ depending only on $X(0)$ and $\rd X(0)/\rd t$, that is, on $(u^0,u^1)$. Consequently,
since $b\ge 0$ and $\sigma_{\pm 1} <0$,
$$
X(t)\le |a_{1}|+|a_{-1}|\ ,\quad t\in [0,T_m)\ .
$$
Similarly, if  $\gamma^2=1/4\mu_1$, then
$$
X(t)=(a_1+a_{-1}t) e^{t\sigma_{1}}-\frac{1}{\gamma^2}\int_0^t b(s)\, (t-s)\, e^{(t-s)\sigma_1}\,\rd s\ ,
$$
where $\sigma_1:=-1/2\gamma^2$ and, again, $a_{\pm 1} \in\R$ depend only on $(u^0,u^1)$. Therefore, since $b\ge 0$ and $\sigma_1 <0$,
$$
X(t)\le (|a_1|+|a_{-1}|t)\,e^{\sigma_1 t}\le c_0\ ,\quad t\in [0,T_m)\ .
$$
We thus have obtained an upper bound on $X(t)$, and we complete the proof as in Lemma~\ref{le.rc3}.
\end{proof}

Let us point out that when $\gamma^2>1/4\mu_1$, the representation formula for $X$ in the previous proof involves sine and cosine functions, and one thus cannot exploit the non-negativity of $b$ to deduce an upper bound for $X$.

\medskip

Clearly,  Lemma~\ref{le.rc5} and Lemma~\ref{le.rc7} are still valid for the hyperbolic case $\gamma>0$ as they only make use of the elliptic equation \eqref{psi}-\eqref{psibc} (in its transformed form for $\Phi$). Moreover, Lemma~\ref{le.rc6} remains valid as well due to Lemma~\ref{le.rc3} and we thus may tackle the proof of Proposition~\ref{pr.rc1.b}.

\begin{proof}[Proof of Proposition~\ref{pr.rc1.b}]
We combine the energy identity \eqref{rc2b} and \eqref{rc24} to obtain
$$
\frac{1}{2} \left[ \mathcal{E}_b(u(t)) + \mathcal{E}_s(u(t)) \right] - C_1(\kappa_0) \le \mathcal{E}(u(t))
\le \mathcal{E}(u^0)+\frac{\gamma^2}{2}\|u^1\|_{L_2(I)}^2\ , \qquad t\in [0,T_0]\cap [0,T_m)\ ,
$$
hence, thanks to Poincar\'e's inequality,
$$
\|u(t)\|_{H^2(I)} \le C(\kappa_0)\ , \qquad t\in [0,T_0]\cap [0,T_m)\ .
$$
This last bound and Lemma~\ref{le.rc7} then ensure that
$$
\|g(u(t))\|_{H^{2\alpha}(I)} \le C(\kappa_0)\ , \qquad t\in [0,T_0]\cap [0,T_m)\ ,
$$
with $2\alpha\in (0,1/2)$, whence
$$
\|f({\bf u}(t))\|_{\mathbb{H}_\alpha} \le C(\kappa_0)\ , \qquad t\in [0,T_0]\cap [0,T_m)\ ,
$$
with the notation \eqref{f1} and \eqref{Ha}. Consequently, \eqref{CPhyp} and \eqref{expdec} imply
\begin{equation}
\|{\bf u}(t)\|_{\mathbb{H}_\alpha} \le C(\kappa_0, T_0)\ , \qquad t\in [0,T_0]\cap [0,T_m)\ , \label{rc100.b}
\end{equation}
which, together with the assumption \eqref{rc1} prevents the occurrence of \eqref{blowup}, so $T_m\ge T_0$.
\end{proof}

The criterion for global existence stated in Theorem~\ref{ThyperIntro.b} is now a consequence of Proposition~\ref{ThyperIntroduction}~(ii) and Proposition~\ref{pr.rc1.b}. Its refined version reads:

\begin{cor}\label{c111c}
Let $\gamma^2\in (0,1/4\mu_1]$. Given $2\alpha\in (0,1/2)$ and an initial value $(u^0,u^1)\in H_D^{4+2\alpha}(I)\times H_D^{2+2\alpha}(I)$  such that $u^0> -1$ in~$I$, let $(u,\psi)$ be the solution to \eqref{hyper1}-\eqref{psibc} defined on the maximal interval of existence $[0,T_m)$. If, for each $T>0$, there is $\kappa(T)\in (0,1)$ such that
$$ 
u(t)\ge -1+\kappa(T)\ \text{ on } I\ ,\quad t\in [0,T_m)\cap [0,T]\ ,
$$ 
then  $T_m=\infty$.
\end{cor}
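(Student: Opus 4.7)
The plan is to derive Corollary \ref{c111c} as a direct consequence of Proposition \ref{pr.rc1.b} via a contradiction argument, exactly parallel to how Corollary \ref{c111} follows from Proposition \ref{pr.rc1} in the parabolic case.

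The strategy is the following. First, I would suppose for contradiction that $T_m < \infty$, and then invoke the standing hypothesis at the value $T := T_m + 1$ (or any $T > T_m$) to produce some $\kappa_0 := \kappa(T) \in (0,1)$ with
\begin{equation*}
u(t,x) \ge -1 + \kappa_0\ ,\qquad x \in I\ ,\quad t \in [0,T_m)\cap [0,T] = [0, T_m)\ .
\end{equation*}
This puts us in a position to apply Proposition \ref{pr.rc1.b} with $T_0 = T$ and this choice of $\kappa_0$, since the hypothesis on $\gamma$, on the initial data $(u^0,u^1)$, and on the regularity $2\alpha \in (0,1/2)$ carry over verbatim from the statement of the corollary.

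Proposition \ref{pr.rc1.b} then yields simultaneously the lower bound $T_m \ge T_0 = T$ and the uniform control
\begin{equation*}
\|u(t)\|_{H^{2+2\alpha}(I)} + \|\partial_t u(t)\|_{H^{2\alpha}(I)} \le c(\kappa_0, T_0)\ ,\qquad t \in [0,T_0]\cap [0, T_m) = [0, T_m)\ .
\end{equation*}
The first assertion $T_m \ge T > T_m$ already contradicts $T_m < \infty$, so we conclude $T_m = \infty$. (Equivalently, the norm bound rules out the alternative \eqref{blowup} in Proposition \ref{ThyperIntroduction}(ii), while the standing hypothesis rules out \eqref{blowup0}, so neither blow-up scenario can occur at a finite time.)

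In short, there is no real obstacle here: all the work has been done in Proposition \ref{pr.rc1.b}, which itself rests on the weighted $L_1$-estimate Lemma~\ref{le.rc3.b} (where the restriction $\gamma^2 \le 1/4\mu_1$ enters), on the energy identity \eqref{rc2b}, and on Lemmas \ref{le.rc5}--\ref{le.rc7} together with the semigroup bound \eqref{expdec} applied to \eqref{CPhyp}. The corollary is simply the global-in-time repackaging of that local result, obtained by letting the arbitrary finite horizon $T$ tend to infinity in the contradiction argument.
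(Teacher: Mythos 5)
Your proof is correct and follows exactly the route the paper intends: Corollary~\ref{c111c} is obtained by combining Proposition~\ref{pr.rc1.b} (applied with $T_0>T_m$ and $\kappa_0=\kappa(T_0)$) with the blow-up alternative of Proposition~\ref{ThyperIntroduction}~(ii), yielding the contradiction $T_m\ge T_0>T_m$ if $T_m$ were finite. Nothing is missing.
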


\section{Steady states}\label{Sec4}

\subsection{Existence of stable steady states: Proof of Theorem~\ref{TStable2}~(i)}\label{Sec4a} 

The precise statement of existence and asymptotic stability of steady states to \eqref{hyper1}-\eqref{psibc} for small values of $\lambda$ is the following:

\begin{prop}\label{pr.stst}
\begin{itemize}
\item[(i)] Let $\kappa\in (0,1)$. There are $\delta=\delta(\kappa)>0$ and an analytic function $$[\lambda\mapsto U_\lambda]:[0,\delta)\rightarrow H_{D}^4(I)$$ such that $(U_\lambda,\Psi_\lambda)$ is for each $\lambda\in (0,\delta)$ the unique steady state to
\eqref{hyper1}-\eqref{psibc} enjoying the following properties
$$
\|U_\lambda\|_{H^4(I)} \le 1/\kappa\ , \quad -1+\kappa \le U_\lambda < 0 \;\;\text{ and }\;\; \Psi_\lambda\in W_2^2(\Omega(U_\lambda))\ .
$$
Moreover, $U_\lambda$ is even for $\lambda\in (0,\delta)$ and $U_0=0$.

\item[(ii)] Let $\gamma=0$ and $\lambda\in (0,\delta)$. There are $\omega_0,r,R>0$ such that for each initial value $u^0\in H_{D}^4(I)$ with $\|u^0-U_\lambda\|_{H^4} <r$, the solution $(u,\psi)$ to \eqref{hyper1}-\eqref{psibc} exists globally in time and
$$
\|u(t)-U_\lambda\|_{H^4(I)}+\|\partial_t u(t)\|_{L_{2}(I)} \le R e^{-\omega_0 t} \|u^0-U_\lambda\|_{H^4(I)}\ ,\quad t\ge 0\ .
$$

\item[(iii)] Let $\gamma>0$, $2\alpha\in(0,1/2)$,  and $\lambda\in (0,\delta)$. There are $\omega_0,r,R>0$ such that for each initial value $(u^0,u^1)\in H_D^{2+2\alpha}(I)\times H_D^{2\alpha}(I)$  with $\|u^0-U_\lambda\|_{H^{2+2\alpha}} +\|u^1\|_{H^{2\alpha}}<r$, the solution $(u,\psi)$ to \eqref{hyper1}-\eqref{psibc} exists globally in time and
$$
\|u(t)-U_\lambda\|_{H^{2+2\alpha}(I)}+\|\partial_t u(t)\|_{H^{2\alpha}(I)} \le R e^{-\omega_0 t} \big(\|u^0-U_\lambda\|_{H^{2+2\alpha}(I)}+\|u^1\|_{H^{2\alpha}}\big)\ ,\quad t\ge 0\ .
$$
\end{itemize}
\end{prop}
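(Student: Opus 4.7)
My plan for part (i) is to apply the analytic implicit function theorem to $F(\lambda, u) := Au + \lambda g(u)$, viewed as a map from a neighborhood of $(0,0)$ in $\mathbb{R} \times H_D^4(I)$ into $L_2(I)$. The analyticity of $g$ (taking values in $H_D^{4\sigma}(I) \hookrightarrow L_2(I)$ for $4\sigma < 1/2$) is ensured by Proposition~\ref{L1}, while $F(0,0)=0$ and $\partial_u F(0,0) = A$ is a topological isomorphism from $H_D^4(I)$ onto $L_2(I)$. This produces an analytic branch $[0,\delta)\ni \lambda \mapsto U_\lambda\in H_D^4(I)$ with $U_0 = 0$, locally unique near $0$. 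Shrinking $\delta=\delta(\kappa)$ yields $\|U_\lambda\|_{H^4(I)}\le 1/\kappa$ and $U_\lambda\ge -1+\kappa$ by continuity. Strict negativity $U_\lambda < 0$ will follow from the identity $U_\lambda = -\lambda A^{-1}g(U_\lambda)$, the positivity $g(U_\lambda) > 0$ (note $g(0)\equiv 1$), and the positivity of $A^{-1}$ acting on nonnegative inputs, i.e.\ the positivity of the Green function of the clamped operator $\beta\partial_x^4 - \tau \partial_x^2$ on $I$ already invoked in the paper via \cite{Gr02,LaurencotWalker_JAM,Ow97}. Evenness of $U_\lambda$ is inherited from the symmetry $x\mapsto -x$ of $F$ via the local uniqueness applied to $x\mapsto U_\lambda(-x)$, and $\Psi_\lambda\in H^2(\Omega(U_\lambda))$ is part of Proposition~\ref{L1}.

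For parts (ii) and (iii) I plan to invoke the principle of linearized stability. Setting $v := u-U_\lambda$, the parabolic equation becomes
\begin{equation*}
\partial_t v + A_\lambda v = \mathcal{R}_\lambda(v)\ ,\qquad A_\lambda := A + \lambda g'(U_\lambda)\ ,
\end{equation*}
with remainder $\mathcal{R}_\lambda(v) = -\lambda[g(U_\lambda+v)-g(U_\lambda)-g'(U_\lambda)v]$ satisfying a quadratic bound $\|\mathcal{R}_\lambda(v)\|_{L_2(I)}\le C\|v\|_{H^4(I)}^2$ near $v=0$, again by analyticity of $g$. The Fr\'echet derivative $g'(U_\lambda)$ maps $H_D^4(I)$ boundedly into $H_D^{4\sigma}(I)\hookrightarrow L_2(I)$, so $\lambda g'(U_\lambda)$ is a small (relatively compact) perturbation of $A$, and standard semigroup perturbation theory ensures that after possibly shrinking $\delta$ the operator $-A_\lambda$ generates an exponentially decaying analytic semigroup on $L_2(I)$ with a uniform rate $\omega_0>0$ for $\lambda\in [0,\delta)$. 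Part (ii) then follows from a contraction argument for the Duhamel formulation in the weighted space $\{v\in C([0,\infty),H_D^4(I))\,:\, \sup_{t\ge 0} e^{\omega_0 t}\|v(t)\|_{H^4(I)}<\infty\}$, using the parabolic smoothing of $e^{-tA_\lambda}$ and the quadratic bound on $\mathcal{R}_\lambda$. Part (iii) proceeds along identical lines in the first-order framework of Section~\ref{Sec3a}: the linearization of \eqref{CPhyp} at $(U_\lambda,0)$ is $\mathbb{A}_\alpha + \lambda \mathbb{B}_\lambda$ where $\mathbb{B}_\lambda(v,w):=(0,g'(U_\lambda)v)\in \mathcal{L}(\mathbb{H}_\alpha)$ has small norm, so \eqref{expdec} together with bounded perturbation theory yields an exponentially decaying semigroup on $\mathbb{H}_\alpha = H_D^{2+2\alpha}(I)\times H_D^{2\alpha}(I)$, and a fixed-point argument modeled on Lemma~\ref{P1} in an exponentially weighted space concludes the proof.

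The decisive technical point in (ii)--(iii) is to simultaneously choose the ambient norm so that the linearized semigroup is exponentially stable \emph{and} the nonlinear remainder is locally quadratic with Lipschitz constant going to $0$ as $v\to 0$; by Proposition~\ref{L1} both requirements are compatible with $H_D^4(I)$ for the parabolic case and with $\mathbb{H}_\alpha$ for the hyperbolic case. The hyperbolic step (iii) is the subtler one because $e^{-t\mathbb{A}_\alpha}$ is only strongly continuous and does not smooth, so the contraction has to rely entirely on the bounded Lipschitz estimate for $g$ from Proposition~\ref{L1} and on the uniform exponential decay \eqref{expdec}; once these ingredients are in place, the remaining work is routine.
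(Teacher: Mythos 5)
Your proposal is correct and follows essentially the same route as the paper: the (analytic) implicit function theorem at $(\lambda,u)=(0,0)$ combined with the sign-preserving property of the clamped operator $\beta\partial_x^4-\tau\partial_x^2$ for part (i), and the principle of linearized stability --- in the $L_2(I)$ analytic-semigroup setting for (ii), and in the $\mathbb{H}_\alpha$ setting for (iii), where $\lambda Df((U_\lambda,0))$ is a bounded perturbation that preserves the exponential decay of $e^{-t\mathbb{A}_\alpha}$ for small $\lambda$ --- exactly as in the paper's appeal to \cite{ELW1} and \cite{Pazy}. Two details worth making explicit: uniqueness among all steady states in $\overline{S}_1(\kappa)$ (not merely IFT-local uniqueness) follows since $u=-\lambda A^{-1}g(u)$ and the boundedness of $g$ force any such steady state into the IFT neighborhood once $\lambda$ is small; and in the parabolic contraction the remainder should be estimated through the $H_D^{4\theta}\to H_D^{4\sigma}$ ($4\theta>2$, $4\sigma<1/2$) mapping property of $g$ from Proposition~\ref{L1}, because smoothing from $L_2(I)$ to $H_D^4(I)$ alone yields the non-integrable kernel $(t-s)^{-1}$ in the Duhamel term.
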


\begin{proof} Since the operator $-A$ is the generator of an analytic semigroup on $L_2(I)$ with exponential decay, we may apply the implicit function theorem and the principle of linearized stability  as in \cite[Theorem~3]{ELW1}  to prove parts (i) and (ii), see also \cite[Proposition~4.1]{LW_QAM} for a complete proof. The negativity of $U_\lambda$ is a consequence of the non-negativity of $g$ and the comparison principle established in \cite[Theorem~1.1]{LaurencotWalker_JAM}. For part (iii) we recall that in \eqref{CPhyp}, the function $f:S_{(\alpha+1)/2}(\kappa)\times H_D^{2\alpha}(I)\rightarrow \mathbb{H}_\alpha$  is continuously differentiable and that $-\mathbb{A}_\alpha$ is the generator of a strongly continuous semigroup on $\mathbb{H}_\alpha$ with exponential decay. Thus, linearizing \eqref{CPhyp} around the steady state $(U_\lambda,0)$ to obtain
$$
\dot{{\bf v}}+\big(\mathbb{A}_\alpha-\lambda Df((U_\lambda,0))\big)  {\bf v}=\lambda F({\bf v})\ ,\quad t>0\ ,\qquad {\bf v}(0)={\bf v}_0
$$
for ${\bf v}={\bf u}-(U_\lambda,0)$ with 
$$
F({\bf v}) := f\big( (U_\lambda,0)+{\bf v} \big) - f\big( (U_\lambda,0) \big) - Df\big( (U_\lambda,0) \big) {\bf v} = o(\|{\bf v}\|_{\mathbb{H}_\alpha})\ \text{ as }\ {\bf v}\to 0 \ .
$$
Noticing that $-\mathbb{A}_\alpha+\lambda Df((U_\lambda,0))$  is again the generator of a strongly continuous semigroup on $\mathbb{H}_\alpha$ with exponential decay for $\lambda$ sufficiently small (see \cite[Theorem 3.1.1]{Pazy}), the principle of linearized stability yields part~(iii).
\end{proof}

\subsection{Non-existence of steady states: Proof of Theorem~\ref{TStable2}~(ii)}\label{Sec4b} 

Finally, we prove that no steady state exists for large values of $\lambda$. To do so, let $(u,\psi)$ be a steady state to \eqref{hyper1}-\eqref{psibc} with regularity $u\in H^4_{D}(I)$, $\psi\in H^2(\Omega(u))$ and satisfying  $u>-1$ on $I$. Set
\begin{equation}
\gamma_m(x) := \partial_z \psi(x,u(x))  \;\;\text{ and }\;\; G(x) := \left( 1 + \ve^2 (\partial_x u(x))^2 \right) \gamma_m(x)^2\ ,\quad x\in I\ . \label{ne5}
\end{equation} 
Then $u$ solves 
\begin{equation}
\beta \partial_x^4 u - \tau \partial_x^2 u = - \lambda G \ , \quad x\in I\ , \label{ne1} 
\end{equation}
with clamped boundary conditions \eqref{hyper2}, and we infer from the non-negativity of $G$ and \cite[Theorem~1.1]{LaurencotWalker_JAM} that
\begin{equation}
-1 < u(x) < 0\ , \quad x\in I\ . \label{ne6}
\end{equation}
Also, it follows from \eqref{psi}, \eqref{psibc}, and the comparison principle that 
\begin{equation}
0 \le \psi(x,z) \le 1\ , \quad (x,z)\in\Omega(u)\ . \label{ne6.2}
\end{equation}

The proof of Theorem~\ref{TStable2}~(ii) is performed by a nonlinear variant of the eigenfunction method and requires the existence of a positive eigenfunction for the linear operator on the left-hand side of \eqref{ne1}, a property which is enjoyed by the operator $\beta \partial_x^4 - \tau \partial_x^2$ in $H_D^4(I)$ as already pointed out. Again, let $\zeta_1$ be the positive eigenfunction in $H_{D}^4(I)$ of the operator $\beta\partial_x^4 - \tau \partial_x^2$ with clamped boundary conditions satisfying $\|\zeta_1\|_{L_1(I)}=1$ and associated to the positive eigenvalue $\mu_1>0$ \cite{Gr02,LaurencotWalker_JAM,Ow97}.

Let us now recall some connections between $\psi$ and $u$ established in \cite{ELW1}. We begin with an easy consequence of \eqref{psibc} and the Cauchy-Schwarz inequality (see \cite[Lemma~9]{ELW1}).

\begin{lem}\label{le.n1}
There holds
\begin{equation}
\int_{-1}^1 \frac{\zeta_1(x)}{1+u(x)}\ \mathrm{d}x \le  \int_{\Omega(u)} \zeta_1(x) |\partial_z \psi(x,z)|^2\ \mathrm{d} (x,z)\ . \label{ne12}
\end{equation}
\end{lem}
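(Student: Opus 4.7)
The plan is to exploit the one-dimensional (vertical) fundamental theorem of calculus together with the Dirichlet-type boundary data on $\psi$, and then apply Cauchy--Schwarz in the $z$-variable. No obstacle is anticipated; the inequality is essentially a weighted Poincar\'e-type estimate whose weight is $\zeta_1$.

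More precisely, fix $x\in I$. Since $\psi(x,-1)=0$ and $\psi(x,u(x))=1$ by \eqref{psibc}, the fundamental theorem of calculus gives
\begin{equation*}
1 = \psi(x,u(x))-\psi(x,-1) = \int_{-1}^{u(x)} \partial_z\psi(x,z)\,\mathrm{d}z\ .
\end{equation*}
Squaring this identity and applying the Cauchy--Schwarz inequality to the right-hand side yields
\begin{equation*}
1 \le (1+u(x))\int_{-1}^{u(x)} |\partial_z\psi(x,z)|^2\,\mathrm{d}z\ .
\end{equation*}
Since $u(x)>-1$ on $I$, we may divide by $1+u(x)>0$ to obtain
\begin{equation*}
\frac{1}{1+u(x)} \le \int_{-1}^{u(x)} |\partial_z\psi(x,z)|^2\,\mathrm{d}z\ ,\quad x\in I\ .
\end{equation*}

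Finally, I would multiply this pointwise inequality by the non-negative weight $\zeta_1(x)\ge 0$, integrate over $x\in I$, and recognize the resulting iterated integral as an integral over $\Omega(u)=\{(x,z)\,:\, x\in I,\ -1<z<u(x)\}$ via Fubini's theorem, which gives exactly \eqref{ne12}. The only ingredients used are the boundary values of $\psi$ on $\partial\Omega(u)$ and the positivity of $\zeta_1$; no use is made of the equation \eqref{ne1} or of the specific structure of $\zeta_1$ beyond its sign.
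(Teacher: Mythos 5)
Your proof is correct and is exactly the argument the paper has in mind: the lemma is stated there as a direct consequence of the boundary condition \eqref{psibc} and the Cauchy--Schwarz inequality (citing Lemma~9 of \cite{ELW1}), i.e.\ the vertical fundamental theorem of calculus from $\psi(x,-1)=0$ to $\psi(x,u(x))=1$, Cauchy--Schwarz in $z$, division by $1+u(x)>0$, and integration against the non-negative weight $\zeta_1$. No gaps; the regularity $\psi\in H^2(\Omega(u))$ justifies the use of the fundamental theorem of calculus on almost every vertical slice.
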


The next result can be proved as \cite[Lemma~10]{ELW1} and follows from \eqref{psi}-\eqref{psibc} after multiplying \eqref{psi} by $\zeta_1 \psi$ and using Green's formula. 

\begin{lem}\label{le.n2}
There holds
\begin{align}
  \int_{-1}^1 \zeta_1(x)\ &\left( 1 + \varepsilon^2\ |\partial_x u(x)|^2 \right)\ \gamma_m(x)\ \mathrm{d}x  \nonumber\\
= &\int_{\Omega(u)} \zeta_1(x)\ \left[ \varepsilon^2 |\partial_x \psi(x,z)|^2 + |\partial_z \psi(x,z)|^2 \right]\ \mathrm{d} (x,z) \nonumber\\ 
&  - \frac{\ve^2}{2} \int_{\Omega(u)} \psi(x,z)^2\,  \partial_x^2\zeta_1\ \mathrm{d} (x,z) + \frac{\varepsilon^2}{2}\ \int_{-1}^1 u(x)\, \partial_x^2\zeta_1\ \mathrm{d} x\ . \label{ne11}
\end{align}
\end{lem}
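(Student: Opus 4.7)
\textbf{Proof proposal for Lemma~\ref{le.n2}.} Following the hint from the statement, the plan is to test the equation \eqref{psi} against $\zeta_1\psi$, integrate over the (fixed!) domain $\Omega(u)$, and use Green's formula. Since $u\in H_D^4(I)$ and $\psi\in H^2(\Omega(u))$ with $\psi$, $\zeta_1$ vanishing on appropriate parts of $\partial\Omega(u)$, all integrations by parts below are justified.

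First, I would write
$$
0 = \int_{\Omega(u)} \zeta_1\,\psi\,\bigl(\varepsilon^2\partial_x^2\psi + \partial_z^2\psi\bigr)\,\mathrm{d}(x,z)
$$
in divergence form as $\operatorname{div}(\varepsilon^2\zeta_1\psi\,\partial_x\psi,\zeta_1\psi\,\partial_z\psi)$ minus the lower order terms
$\varepsilon^2\zeta_1(\partial_x\psi)^2 + \zeta_1(\partial_z\psi)^2 + \varepsilon^2\partial_x\zeta_1\,\psi\,\partial_x\psi$. Applying the divergence theorem on $\Omega(u)$ and inspecting the four pieces of $\partial\Omega(u)$, only the graph $z=u(x)$ contributes: on $\{z=-1\}$ one has $\psi=0$; on $\{x=\pm1\}$ one has $\zeta_1=0$ by the clamped boundary condition on $\zeta_1$. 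Parametrising the graph and using that $\psi(x,u(x))=1$, so that $\partial_x\psi(x,u(x))=-\partial_x u(x)\gamma_m(x)$ by the chain rule, the boundary contribution from the graph collapses precisely to $\int_{-1}^1\zeta_1(x)\bigl(1+\varepsilon^2|\partial_x u|^2\bigr)\gamma_m\,\mathrm{d}x$. This yields
$$
\int_{-1}^1\zeta_1\bigl(1+\varepsilon^2|\partial_x u|^2\bigr)\gamma_m\,\mathrm{d}x
= \int_{\Omega(u)}\zeta_1\bigl[\varepsilon^2|\partial_x\psi|^2+|\partial_z\psi|^2\bigr]\,\mathrm{d}(x,z)
+ \varepsilon^2 \int_{\Omega(u)} \partial_x\zeta_1\,\psi\,\partial_x\psi\,\mathrm{d}(x,z).
$$

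The remaining step is to rewrite the last integral as the two terms on the right of \eqref{ne11}. Writing $\psi\,\partial_x\psi=\tfrac12\partial_x(\psi^2)$ and applying the divergence theorem again on $\Omega(u)$ to the field $(\tfrac12\partial_x\zeta_1\,\psi^2,0)$: on the bottom $\psi=0$, on the vertical sides $\partial_x\zeta_1(\pm1)=0$ (clamped boundary condition on $\zeta_1$ again), and on the graph $\psi=1$. The graph contribution reduces to $-\tfrac12\int_{-1}^1\partial_x\zeta_1\,\partial_x u\,\mathrm{d}x$, and one further integration by parts in $x$ (with no boundary terms, since $\partial_x\zeta_1(\pm1)=0$) turns this into $\tfrac12\int_{-1}^1 u\,\partial_x^2\zeta_1\,\mathrm{d}x$. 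Collecting everything and multiplying by $\varepsilon^2$ gives \eqref{ne11}.

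\textbf{Main obstacle.} The computation is essentially a bookkeeping exercise; the only subtle point is the careful identification of the boundary contributions on the moving upper boundary $\{z=u(x)\}$ after applying Green's formula, in particular the cancellation that produces the prefactor $1+\varepsilon^2|\partial_x u|^2$ from the chain-rule identity $\partial_x\psi(\cdot,u)+\partial_x u\,\gamma_m=0$. Everything else reduces to observing that the boundary contributions on $\{x=\pm1\}$ and $\{z=-1\}$ vanish thanks to the clamped boundary conditions on $\zeta_1$ and the Dirichlet condition $\psi(\cdot,-1)=0$.
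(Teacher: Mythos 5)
Your proof is correct and follows essentially the same route as the paper, which (citing \cite[Lemma~10]{ELW1}) obtains \eqref{ne11} precisely by multiplying \eqref{psi} by $\zeta_1\psi$, integrating over $\Omega(u)$, and using Green's formula, with the boundary terms on $\{x=\pm1\}$ and $\{z=-1\}$ killed by the clamped conditions on $\zeta_1$ and by $\psi=0$, and the graph contribution producing the factor $1+\varepsilon^2|\partial_x u|^2$ via $\partial_x\psi(\cdot,u)=-\partial_x u\,\gamma_m$. The second integration by parts handling $\varepsilon^2\int\partial_x\zeta_1\,\psi\,\partial_x\psi$ and the final one-dimensional integration by parts (using $u(\pm1)=\partial_x\zeta_1(\pm1)=0$) are likewise exactly what yields the last two terms of \eqref{ne11}.
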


We next introduce the solution $\mathcal{U}\in H_D^6(I)$ to
\begin{equation}
- \partial_x^2 \mathcal{U} = u \;\;\text{ in }\;\; I\ , \quad \mathcal{U}(\pm 1)=0\ , \label{ne7}
\end{equation}
and deduce from \eqref{ne6}, \eqref{ne7}, and the comparison principle that
\begin{equation}
- \frac{1}{2} \le \frac{x^2-1}{2} \le \mathcal{U}(x) \le 0\ , \quad x\in I\ . \label{ne8}
\end{equation}
It readily follows from \eqref{ne8} that $-1\le \partial_x \mathcal{U}(-1) \le 0 \le \partial_x \mathcal{U}(1) \le 1$ which, together with \eqref{ne6} and \eqref{ne7}, guarantees that
\begin{equation}
|\partial_x \mathcal{U}(x)| \le 1\ , \quad x\in I\ . \label{ne8.1}
\end{equation}
Let $\alpha\in (0,1]$ to be determined later on. We multiply \eqref{ne1} by $(1+\alpha \mathcal{U}) \zeta_1$ and integrate over $I$. Using \eqref{hyper2}, \eqref{ne7}, and recalling that $G$ is defined in \eqref{ne5}, we obtain
\begin{align*}
\lambda \int_{-1}^1  (1+\alpha \mathcal{U}) \zeta_1 G\ \mathrm{d}x & =  \int_{-1}^1 (1+\alpha \mathcal{U}) \zeta_1 \left( - \beta \partial_x^4 u + \tau \partial_x^2 u \right)\ \mathrm{d}x \\
&=  - \int_{-1}^1 \left[ (1+\alpha \mathcal{U}) \partial_x\zeta_1 + \alpha \zeta_1 \partial_x \mathcal{U} \right] \left( - \beta \partial_x^3 u + \tau \partial_x u \right)\ \mathrm{d}x \\
&=  \int_{-1}^1 \left[ (1+\alpha \mathcal{U}) \partial_x^2\zeta_1 + 2\alpha \partial_x\zeta_1 \partial_x \mathcal{U} - \alpha \zeta_1 u \right] \left( - \beta \partial_x^2 u + \tau u \right)\ \mathrm{d}x \\
&=  \beta \int_{-1}^1 \left[ (1+\alpha \mathcal{U}) \partial_x^3\zeta_1 + 3\alpha \partial_x^2\zeta_1 \partial_x \mathcal{U} - 3 \alpha u \partial_x\zeta_1 - \alpha \zeta_1 \partial_x u \right] \partial_x u\ \mathrm{d}x \\
&\quad + \tau \int_{-1}^1 \left[ (1+\alpha \mathcal{U}) \partial_x^2\zeta_1 + 2\alpha \partial_x\zeta_1 \partial_x \mathcal{U} - \alpha \zeta_1 u \right] u \mathrm{d}x\ .
\end{align*}
Since
\begin{align*}
& \int_{-1}^1 \left[ (1+\alpha \mathcal{U}) \partial_x^3\zeta_1 + 3\alpha \partial_x^2\zeta_1 \partial_x \mathcal{U} - 3 \alpha u \partial_x\zeta_1 - \alpha \zeta_1 \partial_x u \right] \partial_x u\ \mathrm{d}x \\
= & - \int_{-1}^1 \left[ (1+\alpha \mathcal{U}) \partial_x^4 \zeta_1 + 4\alpha\partial_x^3\zeta_1 \partial_x \mathcal{U} - 3 \alpha u \partial_x^2\zeta_1 \right] u\ \mathrm{d}x + \frac{3\alpha}{2} \int_{-1}^1 u^2 \partial_x^2\zeta_1\ \mathrm{d}x - \alpha \int_{-1}^1 \zeta_1 |\partial_x u|^2\ \mathrm{d}x\ ,
\end{align*}
and since $\zeta_1$ is an eigenfunction of $\beta\partial_x^4 - \tau \partial_x^2$ in $H_D^4(I)$, 
we conclude
\begin{align}
\lambda \int_{-1}^1  (1+\alpha \mathcal{U}) \zeta_1 G\ \mathrm{d}x = - \mu_1 \int_{-1}^1 (1+\alpha \mathcal{U}) \zeta_1 u\ \mathrm{d}x - & \alpha\beta \int_{-1}^1 \zeta_1 |\partial_x u|^2\ \mathrm{d}x \nonumber \\
- \alpha \int_{-1}^1 \left[ 4 \beta \partial_x^3\zeta_1 - 2 \tau \partial_x\zeta_1 \right] u \partial_x \mathcal{U} \ \mathrm{d}x - & \alpha \int_{-1}^1 \left[ \tau \zeta_1 - \frac{9\beta}{2} \partial_x^2\zeta_1 \right] u^2\ \mathrm{d}x\ . \label{ne8.2}
\end{align}

At this point, it follows from \eqref{ne6} and \eqref{ne8} that
$$
 - \mu_1 \int_{-1}^1 (1+\alpha \mathcal{U}) \zeta_1 u\ \mathrm{d}x \le  - \mu_1 \int_{-1}^1  \zeta_1 u\ \mathrm{d}x \le  \mu_1 \int_{-1}^1  \zeta_1 \ \mathrm{d}x = \mu_1\ ,
$$
and from \eqref{ne6}, \eqref{ne8.1}, and the non-negativity of $\zeta_1$ that 
\begin{align*}
\alpha \left| \int_{-1}^1 \left[ 4 \beta \partial_x^3\zeta_1 - 2 \tau \partial_x\zeta_1 \right] u \partial_x \mathcal{U} \ \mathrm{d}x \right|& \le \alpha (4 \beta+2 \tau) \|\zeta_1\|_{H_{D}^4(I)}\ ,\\
- \alpha \int_{-1}^1 \left[ \tau \zeta_1 - \frac{9\beta}{2} \partial_x^2\zeta_1 \right] u^2\ \mathrm{d}x& \le  \frac{9\alpha\beta}{2} \|\zeta_1\|_{H_{D}^4(I)} \ .
\end{align*}
Inserting these bounds in \eqref{ne8.2} and observing that
$$
1 + \alpha \mathcal{U} \ge 1 - \frac{\alpha}{2} \ge \frac{1}{2}\ , \quad x\in I\ ,
$$
due to \eqref{ne8}, we end up with 
\begin{equation}
\frac{\lambda}{2} \int_{-1}^1 \zeta_1 G\ \mathrm{d}x \le \lambda \int_{-1}^1  (1+\alpha \mathcal{U}) \zeta_1 G\ \mathrm{d}x \le \mu_1 + \alpha K_1 - \alpha\beta \int_{-1}^1 \zeta_1 |\partial_x u|^2\ \mathrm{d}x \label{ne9}
\end{equation}
for some positive constant $K_1$ depending only on $\beta$ and $\tau$. Thanks to \eqref{ne9}, Lemma~\ref{le.n1}, and Lemma~\ref{le.n2}, we are in a position to argue as in the proof of \cite[Theorem~2~(ii)]{ELW1} to complete the proof of Theorem~\ref{TStable2}. More precisely, let $\delta>0$ be a positive number to be determined later on. We infer from Young's inequality that 
$$
\frac{\delta}{2} \int_{-1}^1 \zeta_1 G\ \mathrm{d}x \ge \int_{-1}^1 \zeta_1 \left( 1 + \ve^2 (\partial_x u)^2 \right) \gamma_m\ \mathrm{d}x - \frac{1}{2\delta} \int_{-1}^1 \zeta_1 \left( 1 + \ve^2 (\partial_x u)^2  \right)\ \mathrm{d}x\ ,
$$
whence
$$
\frac{\lambda}{2} \int_{-1}^1 \zeta_1 G\ \mathrm{d}x \ge \frac{\lambda}{\delta} \int_{-1}^1 \zeta_1 \left( 1 + \ve^2 (\partial_x u)^2 \right) \gamma_m\ \mathrm{d}x - \frac{\lambda}{2\delta^2} \left( 1 + \ve^2 \int_{-1}^1 \zeta_1 (\partial_x u)^2\ \mathrm{d}x \right)\ .
$$
Using \eqref{ne6}, \eqref{ne6.2}, \eqref{ne12} and \eqref{ne11}, we further obtain
\begin{align*}
\frac{\lambda}{2} \int_{-1}^1 \zeta_1 G\ \mathrm{d}x &\ge  \frac{\lambda}{\delta} \int_{-1}^1 \frac{\zeta_1}{1+u}\ \mathrm{d}x - \frac{\lambda \ve^2}{2\delta} \int_{\Omega(u)} \psi^2(x,z)\ \partial_x^2 \zeta_1(x)\ \mathrm{d}(x,z) \\
&\quad + \frac{\lambda \ve^2}{2\delta} \int_{-1}^1 u \partial_x^2 \zeta_1\ \mathrm{d}x - \frac{\lambda}{2\delta^2} \left( 1 + \ve^2 \int_{-1}^1 \zeta_1 (\partial_x u)^2\ \mathrm{d}x \right) \\
&\ge  \frac{\lambda}{\delta} - \frac{\lambda \ve^2}{2\delta} \| \partial_x^2 \zeta_1\|_{L_1(I)} - \frac{\lambda \ve^2}{2\delta} \| \partial_x^2 \zeta_1\|_{L_1(I)} - \frac{\lambda}{2\delta^2} \left( 1 + \ve^2 \int_{-1}^1 \zeta_1 (\partial_x u)^2\ \mathrm{d}x \right) \\
&\ge  \frac{\lambda}{\delta} \left( 1 - \ve^2 \| \partial_x^2 \zeta_1\|_{L_1(I)}  - \frac{1}{2\delta}\right) - \frac{\lambda \ve^2}{2\delta^2} \int_{-1}^1 \zeta_1 (\partial_x u)^2\ \mathrm{d}x \ .
\end{align*}
Combining \eqref{ne9} and the above inequality leads us to
$$
\mu_1 + \alpha K_1 \ge \frac{\lambda}{\delta} \left( 1 - \ve^2 \| \partial_x^2 \zeta_1\|_{L_1(I)}  - \frac{1}{2\delta}\right) + \left( \alpha \beta - \frac{\lambda \ve^2}{2\delta^2} \right) \int_{-1}^1 \zeta_1 (\partial_x u)^2\ \mathrm{d}x\ .
$$ 
We now choose $\delta = \ve \sqrt{\lambda/(2\alpha\beta)}$ and find
$$
\mu_1 + \alpha K_1 \ge \frac{\sqrt{2\alpha\beta\lambda}}{\ve} \left( 1 - \ve^2 \| \partial_x^2 \zeta_1\|_{L_1(I)} \right)  - \frac{\alpha\beta}{\ve^2}\ .
$$ 
Choosing finally $\alpha = \alpha_\varepsilon := \min\{1,\ve^2\}$, we end up with
\begin{equation}
\frac{\ve^2}{2\alpha_\ve \beta} \left( \mu_1 + \left( K_1 + \frac{\beta}{\ve^2} \right) \alpha_\ve \right)^2 \ge \left( 1 - \ve^2 \| \partial_x^2 \zeta_1\|_{L_1(I)} \right)^2 \lambda\ . \label{ne15}
\end{equation}
Setting $\ve_*:=\| \partial_x^2 \zeta_1\|_{L_1(I)}^{-1/2}>0$, Theorem~\ref{TStable2}~(ii) now readily follows from \eqref{ne15}.

\section*{Acknowledgments}

The work of Ph.L. was partially supported by the CIMI (Centre International de Math\'ematiques et d'Informatique) Excellence program and by the Deutscher Akademischer Austausch Dienst (DAAD) while enjoying the hospitality of the Institut f\"ur Angewandte Mathematik, Leibniz Universit\"at Hannover.



\end{document}